\documentclass[margin=1.5in, 11pt]{amsart}

\usepackage[utf8]{inputenc}
\usepackage[margin=1in]{geometry}
\usepackage{amsmath, amssymb, amsthm, verbatim, hyperref, xcolor,multirow,fancyvrb, tikz-cd,ytableau}
\usetikzlibrary{arrows}

\usepackage{thmtools}
\usepackage{thm-restate}

\usepackage{cellspace} %
\declaretheorem[name=Theorem,numberwithin=section]{thm}
\newtheorem{lemma}[thm]{Lemma}
\newtheorem{prop}[thm]{Proposition}
\newtheorem{cor}[thm]{Corollary}
\newtheorem{conj}[thm]{Conjecture}

\theoremstyle{definition}

\newtheorem{construction}[thm]{Construction}
\newtheorem{defn}[thm]{Definition}
\newtheorem{example}[thm]{Example}
\newtheorem{notation}[thm]{Notation}

\theoremstyle{remark}
\newtheorem{remark}[thm]{Remark}

\newcommand{\kk}{\mathbf{k}}

\newcommand{\ZZ}{\mathbb{Z}}
\newcommand{\mm}{\mathfrak{m}}
\newcommand{\cK}{\mathcal{K}}
\newcommand{\cF}{\mathcal{F}}
\newcommand{\cC}{\mathcal{C}}
\newcommand{\cG}{\mathcal{G}}
\newcommand{\nn}{\mathfrak{n}}
\newcommand{\cL}{\mathcal{L}}
\newcommand{\xx}{\mathbf{x}}
\renewcommand{\SS}{\mathbb{S}}
\newcommand{\dd}{\underline{d}}
\newcommand{\FF}{\mathbb{F}}

\newcommand{\In}{\mathrm{in}}
\newcommand{\GL}{\mathrm{GL}}

\newcommand{\into}{\hookrightarrow}
\newcommand{\Tot}{\mathrm{Tot}}
\newcommand{\Hilb}{\mathrm{Hilb}}
\renewcommand{\Im}{\mathrm{Im}}
\newcommand{\Tr}{\mathrm{Tr}}

\title{Syzygies of the transfer ideal of the symmetric group}

\author{Harm Derksen}
\address{Northeastern University, Boston, USA}
\email{ha.derksen@northeastern.edu}

\author{Alexandra Pevzner}
\address{Northeastern University, Boston, USA}
\email{a.pevzner@northeastern.edu}

\date{\today}

\keywords{}

\begin{document}

\setcounter{MaxMatrixCols}{18}

\begin{abstract}
    We consider the modular action of the symmetric group $S_n$ on $R = \kk[x_1,\ldots,x_n]$ when $\mathrm{char}(k) = p \leq n$. We show that the image of the transfer map $R\to R^{S_n}$ is an elimination ideal $J\cap R^{S_n}$, where $J\subset R^{S_n}[t]$ is generated by $p$ polynomials with generic coefficients. The structure of this elimination ideal depends only on the quotient $q$ when writing $n = qp + r$ with unique remainder $0 \leq r < p$, implying that the image of the transfer also enjoys this stability. We conjecture a determinantal presentation of the elimination ideal and prove it in the case that $q = 2$. Furthermore, we exhibit a  $\GL$-equivariant, linear minimal free resolution of a certain initial ideal, allowing us to extract the graded Betti numbers of the elimination ideal.
\end{abstract}

\maketitle

\section{Introduction}
    Fix a field $\kk$ and a finite group $G$. Let $G$ act on the polynomial ring $R = \kk[x_1,\ldots,x_n]$ by graded $\kk$-algebra automorphisms. When the action of $G$ is modular, meaning that $|G|$ is not invertible in $\kk$, we cannot average elements of $R$ over the group.
     Instead, the \textit{transfer map}
    \begin{align*}
        \mathrm{Tr}^G: R &\longrightarrow R^G \\
        r &\mapsto \sum_{g\in G} g\cdot r
    \end{align*}
    is a map of $R^G$-modules which can be used to construct invariant polynomials. In the modular case, its image $\Im(\Tr^G)$ is a proper, nonzero ideal of $R^G$ (see \cite[Thm 2.2]{Shank--Wehlau}).

    Taking $G$ to be the symmetric group acting on $R$ by variable permutation, the invariant ring $R^G$ is well known to be the polynomial algebra $\kk[e_1,\ldots,e_n]$ generated by the elementary symmetric polynomials $e_1,\ldots,e_n$. In previous work, the second author conjectured
    that when $\kk = \FF_p$, the minimal free resolution of $\Im(\Tr^{S_n})$ over $R^{S_n}$ depends only on the quotient $q\in\ZZ$ when writing $n$ uniquely as $n = qp + r$ with $0\leq r < p$; see \cite[Conj 6.1]{transfer-paper}. In this paper, we prove the conjecture by realizing the image of the transfer as a module over a polynomial subalgebra of $R^{S_n}$ which, for fixed $q$, is the same for all $0 \leq r \leq p-1$.

\subsection{Outline of paper} In Section \ref{sec: feshbach thm}, we apply a theorem of Feshbach on the radical of $\Im(\Tr^G)$ to express $\Im(\Tr^{S_n})$ as an elimination ideal $\langle f_0, f_1,\ldots, f_{p-1}\rangle \cap R^{S_n}$, where the $f_i$ are certain polynomials in $R^{S_n}[t]$ with generic coefficients. We then note that the structure of the ideal $\langle f_0, f_1,\ldots, f_{p-1}\rangle$ only differs by a change of variables as $n$ varies within $\{qp, qp+1, \ldots, qp + (p-1)\}$. Theorem \ref{thm: ideal stability} makes this idea precise and proves Conjecture 6.1 of \cite{transfer-paper}.

We then consider the elimination ideal $I = \langle f_0, f_1, \ldots, f_{p-1}\rangle \cap R^{S_n}$ in its own right. Conjecture \ref{conj: determinantal ideal is elimination ideal} posits that $I$ is generated by the maximal minors of a matrix $A$, which is built out of concatenations of Sylvester matrix blocks. We show in Proposition \ref{prop: set theoretic description} that $I$ is generated up to radical by sums of maximal minors of $A$. In Section \ref{sec: initial ideal}, we conjecture that the determinantal ideal has a certain squarefree initial ideal with a combinatorial characterization. In Section \ref{sec: d = 2 proof}, we use the initial ideal to prove Conjecture \ref{conj: determinantal ideal is elimination ideal} when $q = 2$. Finally, in Section \ref{sec: d = 2 resolution}, we build explicit, $\GL_{p-1}$-equivariant minimal free resolutions of the associated graded ideal of $I$ when $q = 2$. These resolutions are linear, allowing us to deduce the projective dimension and graded Betti numbers of $I$.

\section{Feshbach's theorem and the transfer ideal as an elimination ideal}\label{sec: feshbach thm}

Let $G$ be a finite group acting on the polynomial ring $R = \kk[x_1,\ldots,x_n]$. Suppose that $\kk = \FF_p$, where the prime $p$ divides $|G|$. We state below a theorem of Feshbach \cite[Thm 2.4]{Feshbach} which describes the radical of $\Im(\Tr^G)$ in terms of group elements of order $p$.

\begin{thm}\cite{Feshbach}\label{thm: feschbach}
    In the setting above,
    $$\sqrt{\Im(\Tr^G)} = \left(\bigcap_{\substack{g \in G: \\ |g| = p}} \langle x_i - g(x_i) : i = 1,\ldots,n \rangle\right) \cap R^G.$$
\end{thm}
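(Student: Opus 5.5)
Since both sides are radical ideals of $R^G$ (the right side is an intersection of prime ideals contracted to $R^G$, hence radical), we compare them on varieties. Work over $\overline{\kk}$, write $V = \overline{\kk}^n$, and let $\pi: V \to V/G$ be the quotient map. Then $\mathrm{Spec}\, R^G$ is $V/G$, and the right-hand side cuts out $\pi(V^{\mathrm{sing}})$, where
$$V^{\mathrm{sing}} = \bigcup_{|g| = p} V^g$$
and $V^g$ is the fixed subspace cut out by $\langle x_i - g(x_i)\rangle$. Radicality is unaffected by base change, since $\Tr^G$ commutes with extension of scalars and $R^G \otimes \overline{\kk} = (R\otimes\overline{\kk})^G$. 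By the Nullstellensatz, the theorem is therefore equivalent to the statement that, for a point $v \in V$, every element of $\Im(\Tr^G)$ vanishes at $v$ if and only if the stabilizer $G_v$ has order divisible by $p$. Indeed, $G_v$ contains an element of order $p$ exactly when $p \mid |G_v|$, by Cauchy.

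For the first direction, suppose $p \mid |G_v|$ and let $f = \Tr^G(h)$. Choose coset representatives $g_1,\dots,g_m$ of $G/G_v$. Then
$$f(v) = \sum_j \sum_{s\in G_v} h(g_j^{-1}\cdots)\ldots$$
More concretely, $(g\cdot h)(v) = h(g^{-1}v)$ depends only on the coset $g^{-1}v$, so
$$f(v) = |G_v|\sum_{w\in Gv} h(w) = 0$$
in characteristic $p$.

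For the converse, suppose $p \nmid |G_v|$. We must produce $h$ with $\Tr^G(h)(v) \neq 0$. The orbit $Gv$ is a finite set of points, so there is a polynomial $h$ with $h(v)=1$ and $h(w)=0$ for the other points $w \in Gv \setminus \{v\}$ (Lagrange interpolation). The same computation then gives
$$\Tr^G(h)(v) = |G_v|\, h(v) = |G_v| \neq 0.$$
This shows that the zero set of $\Im(\Tr^G)$ in $V/G$ is exactly $\pi(V^{\mathrm{sing}})$.

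It remains to identify ideals. The zero locus of $J=\bigcap_g \langle x_i - g(x_i)\rangle$ is $V^{\mathrm{sing}}$, and $J\cap R^G$ is the ideal of invariants vanishing on $V^{\mathrm{sing}}$. Because $V^{\mathrm{sing}}$ is $G$-stable, its image under the finite, surjective map $\pi$ is closed, with ideal $J\cap R^G$. Applying the Nullstellensatz in $R^G\otimes\overline{\kk}$ gives $\sqrt{\Im(\Tr^G)} = J\cap R^G$ after base change. Descending to $\FF_p$ uses faithful flatness together with the fact that $J$ is defined over $\FF_p$.

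The main subtlety is this bookkeeping. The ideals $\langle x_i - g(x_i)\rangle$ are linear, hence prime and geometrically prime, so both sides commute with base change and the radical computed over $\overline{\kk}$ restricts correctly. The vanishing argument itself is elementary.
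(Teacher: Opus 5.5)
Your proof is correct. The paper gives no proof of this statement: it quotes it from \cite{Feshbach} and then combines it with the radicality result of \cite{Shank--Wehlau}. So your proposal supplies an argument for something the paper only cites.

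Your argument runs as follows.
\begin{enumerate}
\item The orbit-sum identity $\Tr^G(h)(v)=|G_v|\sum_{w\in Gv}h(w)$ shows that every transfer vanishes at $v$ when $p\mid |G_v|$.
\item Interpolation on the finite orbit $Gv$ produces a transfer that does not vanish at $v$ when $p\nmid |G_v|$.
\item Cauchy's theorem identifies $\{v : p\mid |G_v|\}$ with $\bigcup_{|g|=p}V^g$.
\item The Nullstellensatz on $R^G\otimes\overline{\kk}$, followed by descent, turns this into the ideal equality.
\end{enumerate}
This gives a self-contained proof that works over any field of characteristic $p$. It also directly yields the set-theoretic description the paper actually uses in the proof of Theorem~\ref{thm: transfer ideal is elimination ideal}. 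For $S_n$, the stabilizer of $v$ is the Young subgroup $\prod_i S_{m_i}$, where the $m_i$ are the multiplicities of the coordinates of $v$. Since $p\mid\prod_i m_i!$ exactly when some $m_i\geq p$, the condition $p\mid|G_v|$ says some coordinate is repeated at least $p$ times. The advantage of the citation is brevity.

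There are a few points of presentation to fix.
\begin{itemize}
\item The first display in your first-direction paragraph is garbled and should be deleted; the formula after \emph{more concretely} is the correct one.
\item Your assertion that radicality is unaffected by base change is not justified by the reason you give. That reason only shows that $\Im(\Tr^G)$ and $R^G$ commute with scalar extension. You do not need the assertion, and the descent can be done directly:
\begin{itemize}
\item If $f\in J\cap R^G$, then $f$ vanishes on $V^{\mathrm{sing}}$. Your Nullstellensatz step gives $f^m\in \Im(\Tr^G)\,(R^G\otimes\overline{\kk})$ for some $m$. Faithful flatness of $R^G\to R^G\otimes\overline{\kk}$ then gives $\Im(\Tr^G)\,(R^G\otimes\overline{\kk})\cap R^G=\Im(\Tr^G)$, so $f\in\sqrt{\Im(\Tr^G)}$.
\item Conversely, $\Im(\Tr^G)\subseteq J\cap R^G$ by your vanishing computation together with the Nullstellensatz over $\kk$. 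Since $J\cap R^G$ is radical, $\sqrt{\Im(\Tr^G)}\subseteq J\cap R^G$.
\end{itemize}
\item The closedness of $\pi(V^{\mathrm{sing}})$ plays no role in the argument.
\end{itemize}
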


If we make the further assumption that $G$ acts on $R$ by variable permutation, then it was shown by Shank--Wehlau \cite[Thm 6.1]{Shank--Wehlau} that $\Im(\Tr^G)$ is a radical ideal, so that Theorem \ref{thm: feschbach} exactly describes $\Im(\Tr^G)$. 

We now specialize to the case that $G$ is the symmetric group $S_n$ with its usual permutation action on $R = \kk[x_1,\ldots,x_n]$. Then $R^{S_n}$ is equal to the polynomial algebra $\kk[e_1,\ldots,e_n]$ generated by the elementary symmetric polynomials, where $\deg(e_i) = i$. In this case, the image of the transfer is an elimination ideal arising from a collection of univariate polynomials, where we now treat the algebraically independent $e_1,\ldots,e_n$ as generic coefficient variables.

\begin{thm}\label{thm: transfer ideal is elimination ideal}
    Fix a positive integer $n$ and let $\kk = \FF_p$, where $p$ is a prime such that $p\leq n$. Consider the standard action of the symmetric group $S_n$ on the polynomial ring $R = \kk[x_1,\ldots,x_n]$. Write $n = qp + r$, where $0 \leq r < p$ and $q\geq0$. Then $I_n:= \Im(\Tr^{S_n})$ is an elimination ideal $\langle f^{(r)}_0, f^{(r)}_1,\ldots, f^{(r)}_{p-1}\rangle \cap R^{S_n}$, where the $f^{(r)}_k$ are polynomials in $R^{S_n}[t]$ defined by
    \begin{equation}\label{eq: f_1..f_k}
    f^{(r)}_k =
    \begin{cases}
        t^q + e_pt^{q-1} + e_{2p}t^{q-2} + \cdots + e_{(q-1)p}t + e_{qp} & \text{if }i=0\\
        e_kt^q + e_{p+k}t^{q-1}+\cdots + e_{qp+k} & \text{if }r\geq1\text{ and }1\leq k \leq r \\
        e_kt^{q-1} + e_{p+k}t^{q-2} + \cdots + e_{(q-1)p + k} & \text{if }r < k \leq p-1.
    \end{cases}
    \end{equation}
    We often write $f_k$ for $f_k^{(r)}$ when $r$ is clear from context.
\end{thm}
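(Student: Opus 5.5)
The plan is to combine Theorem~\ref{thm: feschbach} with the Shank--Wehlau radicality result, which together give
$$I_n=\Big(\bigcap_{|g|=p}\langle x_i-g(x_i)\rangle\Big)\cap R^{S_n}.$$
The steps are: simplify the intersection on the right, describe its zero locus through $e_1,\ldots,e_n$, and compare the resulting ideal with the elimination ideal.

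First I simplify the intersection. An element $g\in S_n$ of order $p$ is a product of $m$ disjoint $p$-cycles, with $1\le m\le q$. If $c$ is one of these cycles, then $\langle x_i-g(x_i)\rangle\supseteq\langle x_i-c(x_i)\rangle$. Hence the intersection equals the intersection over single $p$-cycles $c$ alone. That is the (radical) ideal of the union $Z$ of linear subspaces of $\bar\kk^n$ on which some $p$ coordinates coincide. Each of these ideals is generated by linear forms over $\FF_p$, so passing to $\bar\kk$ is harmless, and
$$I_n=\{h\in\kk[e_1,\ldots,e_n]: h|_Z=0\}.$$
Next I transport $Z$ to $\operatorname{Spec}R^{S_n}$. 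Under the finite surjective quotient map $\mathbb A^n\to\mathbb A^n/S_n$, the point $x$ goes to the coefficient vector of $F(T)=\prod(T-x_i)=\sum_i(-1)^ie_iT^{n-i}$. Moreover, $x\in Z$ iff $F$ has a root of multiplicity at least $p$, and since $(T-a)^p=T^p-a^p$ in characteristic $p$, this happens iff $T^p-s$ divides $F$ for some $s$. Therefore $I_n$ is exactly the ideal of symmetric polynomials vanishing on the image of $Z$.

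The divisibility condition is computed by reducing $F$ modulo $T^p-s$, i.e.\ substituting $T^p=s$. Sorting the exponents $n-i=qp+r-i$ by their residue $j\in\{0,\ldots,p-1\}$ modulo $p$, the coefficient of $T^j$ in the remainder is $\sum\pm e_i s^{(n-i-j)/p}$, summed over $i\equiv r-j \pmod p$.

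For $j=r$ one gets $i=0,p,\ldots,qp$, which is $f_0$ up to the signs $(-1)^{mp}$. For $i\equiv k$ with $1\le k\le r$ the indices are $k,p+k,\ldots,qp+k$, giving degree $q$ in $s$. For $r<k\le p-1$ they stop at $(q-1)p+k$, giving degree $q-1$. The signs are absorbed by replacing $s$ by $\pm t$ and rescaling each coefficient polynomial by a unit; when $p=2$ no signs arise. So with $J=\langle f_0,\ldots,f_{p-1}\rangle\subset R^{S_n}[t]$, the image of $Z$ is the projection $\pi(V(J))$, which is closed because $f_0$ is monic in $t$. The Nullstellensatz then gives $I_n=\sqrt{J\cap R^{S_n}}$.

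The main obstacle is removing the radical, i.e.\ showing $J\cap R^{S_n}$ is itself radical, equivalently $I_n\subseteq J$. I would try to prove this inclusion directly on generators of $I_n=\Im(\Tr^{S_n})$. For a monomial $x^a$, $\Tr(x^a)$ equals $|\mathrm{Stab}(x^a)|$ times the orbit sum, so it is nonzero only when the stabilizer order is prime to $p$, i.e.\ when every exponent value occurs fewer than $p$ times. I would then show such orbit sums lie in the extended ideal $JR[t]$, where $JR[t]$ is generated by the coefficients of $\prod(T-x_i)\bmod (T^p-t)$, and intersect back down to $R^{S_n}$ using that $R$ is free over $R^{S_n}$ and $f_0$ is monic. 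A fallback is to show $J$ is a radical (e.g.\ reduced Cohen--Macaulay) ideal, so that its elimination ideal is radical, possibly via a Gr\"obner/initial ideal argument.
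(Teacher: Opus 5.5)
Your argument up to $I_n=\sqrt{J\cap R^{S_n}}$ follows the paper's proof: Feshbach plus Shank--Wehlau, the locus where $p$ coordinates coincide, $(T-a)^p=T^p-a^p$, and reading off the $f_k$ by sorting exponents mod $p$. You carry it out more carefully than the paper does, including the reduction from products of disjoint $p$-cycles to single $p$-cycles, the passage to $\overline{\kk}$, and the closedness of the projection. You are also right that this only proves the statement up to radical. The paper's proof goes straight from ``the $f_k$ have a common root'' to $I_n=J\cap R^{S_n}$ and never argues that the elimination ideal is radical. However, your proposal does not close this step either. The orbit-sum strategy is only a plan, and it would need explicit generators of $\Im(\Tr^{S_n})$, which is the hard part. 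The fallback is not carried out. So, as written, you have proved only $I_n=\sqrt{J\cap R^{S_n}}$, which is a genuine gap.

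The missing idea is that $J$ is in fact prime, for an elementary reason. As you observed, $J$ is generated by the coefficients $\rho_0,\dots,\rho_{p-1}$ of the remainder of $F(T)=\sum_{i=0}^n e_iT^{n-i}$ on division by the monic polynomial $T^p-t$ in $\kk[e,t][T]$; using $\prod(T+x_i)$, as the paper does, avoids your sign bookkeeping. Write $F=(T^p-t)Q+\rho$ with $Q=T^{n-p}+c_1T^{n-p-1}+\cdots+c_{n-p}$, which uses $n\ge p$.
\begin{itemize}
\item Because the divisor is monic, the $c_j$ and $\rho_k$ are polynomials in $(e,t)$.
\item Conversely, the $e_i$ are polynomials in $(c,\rho,t)$.
\item Uniqueness of division makes these two substitutions mutually inverse.
\end{itemize}
This gives $\kk[e_1,\dots,e_n,t]\cong\kk[c_1,\dots,c_{n-p},\rho_0,\dots,\rho_{p-1},t]$, with $J$ corresponding to $\langle\rho_0,\dots,\rho_{p-1}\rangle$. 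Hence $\kk[e,t]/J\cong\kk[c,t]$ is a domain. Therefore $J\cap R^{S_n}$, which is the kernel of $e_i\mapsto(\text{coefficient of }T^{n-i}\text{ in }(T^p-t)Q)$, is prime and in particular radical. Since $I_n$ is radical by Shank--Wehlau and both ideals have the same zero set in $\overline{\FF}_p^{\,n}$, the Nullstellensatz gives $I_n=J\cap R^{S_n}$. For $q=2$, this is, up to $t\mapsto -\alpha$, the parametrization $\varphi$ the paper uses in Section~5.
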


\begin{proof}
    The elements in the symmetric group which have order $p$ are exactly the $p$-cycles and the products of disjoint $p$-cycles. Fix a $p$-cycle $g = (a_1,\ldots,a_p)$, and consider the prime ideal $$K_g = \langle x_i - g(x_i) : i = 1,\ldots,n\rangle\subset \kk[x_1,\ldots,x_n].$$
    Then $K_g$ vanishes on a point $(x_1,\ldots,x_n)$ in the affine space $\mathbb{A}^n_{\kk}$ if and only if $x_{a_1} = x_{a_2} = \cdots = x_{a_p}$. From this it follows that the vanishing locus of the intersection of all such $K_g$ is given by
    $$V\bigg(\bigcap_{\substack{g\in S_n:\\ |g| = p}} K_g\bigg) = \{(x_1,\ldots,x_n)\in\mathbb{A}^n_\kk : (x_1,\ldots,x_n)\text{ has at least }p\text{ repeated coordinates}\}.$$
    Thus, $(x_1,\ldots,x_n)\in V(\bigcap_gK_g)$ if and only if the polynomial 
    $$F(t) = (t + x_1)(t+x_2)\cdots(t+x_n) \in \kk[t]$$
    has a root of multiplicity at least $p$. If $\alpha\in \kk$ is such a root, then we can write $$F(t) = (t-\alpha)^p\cdot G(t) = (t^p - \alpha^p)\cdot G(t)$$ for some $G(t)\in \kk[t]$. Collecting coefficients on powers of $t$ based on their residue mod $p$, one can uniquely express $F(t)$ as
    \begin{equation}\label{eq: t^p}
        F(t) = \sum_{k=0}^{p-1}t^k \tilde{f}_k(t^p)
    \end{equation} for some polynomials $\tilde{f}_0, \tilde{f}_1,\ldots,\tilde{f}_{p-1}$. Equation (\ref{eq: t^p}) must agree with the expression for $F(t)$ in terms of the elementary symmetric polynomials in $x_1,\ldots,x_n$, i.e. 
    $$F(t) = t^n + e_1t^{n-1} + e_2t^{n-2} + \cdots + e_n.$$
    Thus, the $\tilde{f}_k$ are exactly the polynomials $f_k$ given in the statement of the theorem. Moreover, the $f_k$ must have a common root (namely $\alpha^p$). It then follows from Theorem \ref{thm: feschbach} that $I_n$ is exactly the elimination ideal $\langle f_0, f_1, \ldots, f_{p-1}\rangle \cap R^{S_n}$, as desired. 
\end{proof}

\section{Stability in the syzygies of the transfer ideal}\label{sec: syz stability}

It was conjectured in \cite[Conj 6.1]{transfer-paper} that the minimal free resolution of $I_n:= \Im(\Tr^{S_n})$ as an $\FF_p[e_1,\ldots,e_n]$-module depends only on $q$ (up to predictable degree shifts) when writing $n$ uniquely as $n = qp + r$, with $q,r\in\ZZ_{\geq0}$ and $r \leq p-1$. This can be proven using the realization of $I_n$ as the elimination ideal of Theorem \ref{thm: transfer ideal is elimination ideal}. In fact, fixing $q$, the ideals $I_{qp+r}$ have the same structure for all $0 \leq r \leq p-1$, which we make precise in the next theorem.

\begin{thm}\label{thm: ideal stability}
    Fix $q\geq 0$ and let $A = \FF_p[e'_1, \ldots, e'_{qp}]$ be the $\ZZ/p\ZZ$-graded polynomial ring with $\deg(e'_i) = i\bmod p$. Let the ring of symmetric polynomials $R^{S_{qp+r}} = \FF_p[e_1,\ldots,e_{qp+r}]$ also have the $\ZZ/p\ZZ$-grading $\deg(e_i) = i\bmod p$. Then, there exist homogeneous ring inclusions $\iota_r:A \into R^{S_{qp+r}}$ for each $0\leq r \leq p-1$ such that
    \begin{enumerate}
        \item[(i)] each $\iota_r$ is a flat ring extension, and
        \item[(ii)] via the map $\iota_r$, there is an isomorphism $R^{S_{qp+r}}\otimes_A I_{qp}\cong I_{qp+r}$ of $A$-modules.
    \end{enumerate}
\end{thm}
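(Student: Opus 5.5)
Since $I_{qp}$ and $I_{qp+r}$ are both elimination ideals by Theorem \ref{thm: transfer ideal is elimination ideal}, the plan is to choose $\iota_r$ so that it carries the generators $f^{(0)}_k$ to the generators $f^{(r)}_k$, and then show that elimination commutes with the flat base change. Compare the formulas in (\ref{eq: f_1..f_k}) for $n=qp$ and $n=qp+r$. For $n=qp$, every $f^{(0)}_k$ with $k\geq1$ has the form $e_kt^{q-1}+\cdots+e_{(q-1)p+k}$, and $f^{(0)}_0=t^q+e_pt^{q-1}+\cdots+e_{qp}$. For $n=qp+r$, $f^{(r)}_0$ looks identical. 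For $r<k\le p-1$, $f^{(r)}_k$ also looks identical. For $1\le k\le r$, $f^{(r)}_k$ has $q+1$ coefficients $e_k,e_{p+k},\dots,e_{qp+k}$ rather than $q$ coefficients.

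The first step is to handle the mismatched indices $1\le k\le r$ by a change of variables. Reduce $f^{(r)}_k$ modulo the monic polynomial $f_0$: set $g_k = f^{(r)}_k - e_k f_0$, which has degree $\le q-1$ in $t$. Its coefficients are $e_{jp+k}-e_ke_{jp}$ for $j=1,\dots,q$, and they are homogeneous of degree $k \bmod p$. The ideal $\langle f_0,\dots,f_{p-1}\rangle$ is unchanged by this replacement. Now define $\iota_r$ as follows: $e'_{jp}\mapsto e_{jp}$; $e'_{jp+k}\mapsto e_{jp+k}$ for $k>r$; and $e'_{(j-1)p+k}\mapsto e_{jp+k}-e_ke_{jp}$ for $1\le k\le r$ and $1\le j\le q$. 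This map is a graded homomorphism, and the variables $e_1,\dots,e_r$ are not in its image. The images of the $e'_i$, together with $e_1,\dots,e_r$, are obtained from $e_1,\dots,e_{qp+r}$ by a triangular change of variables with unit diagonal. Hence $R^{S_{qp+r}}=\iota_r(A)[e_1,\dots,e_r]$ is a polynomial ring over $\iota_r(A)$. This gives injectivity and (i): polynomial extensions are free, hence flat. The step to double-check is the bookkeeping, namely that every index in $\{1,\dots,qp+r\}$ other than $1,\dots,r$ is hit exactly once as a leading term.

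Under $\iota_r$ (extended by $t\mapsto t$), the $f^{(0)}_k$ map to $f_0$, the $g_k$ (for $1\le k\le r$), and the $f^{(r)}_k$ (for $k>r$). So $\iota_r(J_{qp})R^{S_{qp+r}}[t]=J_{qp+r}$, where $J$ denotes the ideal generated by the $f$'s in the polynomial ring over $t$.

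The remaining step, which I expect to be the main point, is that elimination commutes with this base change: if $B=A[y_1,\dots,y_r]$ with $y_i=e_i$, then $(JB[t])\cap B = (J\cap A)B$. I would prove this by using that $B[t]=A[t]\otimes_A B$ and that $B$ is free over $A$ with monomial basis $y^\alpha$. An element of $JB[t]$ decomposes uniquely as $\sum_\alpha h_\alpha y^\alpha$ with $h_\alpha\in A[t]$. Since $J$ is generated in $A[t]$, each $h_\alpha$ lies in $J$; this uses the flatness statement $JB[t]=\bigoplus_\alpha J y^\alpha$. If the element also lies in $B$, then each $h_\alpha$ is $t$-free, so $h_\alpha\in J\cap A$. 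Hence $I_{qp+r}=I_{qp}B$. Finally, flatness gives $B\otimes_A I_{qp}\cong I_{qp}B$, which proves (ii).
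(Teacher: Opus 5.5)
Your proof is correct, and its skeleton matches the paper's. You use the same map $\iota_r$: your $e'_{(j-1)p+k}\mapsto e_{jp+k}-e_ke_{jp}$ is the paper's $e'_{dp+i}\mapsto e_{(d+1)p+i}-e_{(d+1)p}e_i$. You also obtain it from the same replacement $f_k\mapsto f_k-e_kf_0$ for $1\le k\le r$. The differences lie in how the two remaining facts are justified.

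\textbf{Flatness.} The paper invokes the graded local flatness criterion at the homogeneous maximal ideal of $A$. It combines this with the assertion that the $\iota_r(e'_i)$ form a regular sequence (Eisenbud, Exercises 6.10 and 18.18). You instead observe that $e_{jp+k}\mapsto e_{jp+k}-e_ke_{jp}$ is a triangular automorphism of $\FF_p[e_1,\ldots,e_{qp+r}]$. Hence $R^{S_{qp+r}}=\iota_r(A)[e_1,\ldots,e_r]$ is a polynomial ring over $\iota_r(A)$. This is more elementary and gives freeness rather than mere flatness. It also makes injectivity of $\iota_r$ (and the regular-sequence property the paper cites) immediate.

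\textbf{Elimination and base change.} The paper simply asserts that $I_{qp+r}$ is the extension of $I_{qp}$, i.e.\ that forming the elimination ideal commutes with base change along $\iota_r$. It then uses flatness only to identify $R^{S_{qp+r}}\otimes_A I_{qp}$ with that extension. The assertion is true, since the kernel of $A\to A[t]/J$ commutes with flat base change, but the paper does not argue it. Your decomposition of $JB[t]$ via the $A[t]$-basis $\{y^\alpha\}$ of $B[t]$ proves exactly this step. So your write-up fills a gap the paper leaves implicit.
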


\begin{proof}
    Fix $0 \leq r \leq p-1$ and let $n = qp+r$. Let $J$ be the ideal $\langle f_0,f_1, \ldots, f_{p-1}\rangle$ of $R^{S_n}[t]$, where the $f_i$ are as in (\ref{eq: f_1..f_k}).
    For each $1 \leq i \leq r$, we may replace $f_i$ by the $S$-pair $$f_i' := S(f_0, f_i) = f_i - e_if_0$$ of degree $q-1$ so that the ideals $\langle f_0, f_1,\ldots,f_{p-1}\rangle$ and $\langle f_0, f_1', \ldots, f'_r, f_{r+1}, \ldots, f_{p-1}\rangle$ are equal. The coefficient on $t^{q-k}$ in $f_i'$ is $e_{kp+i} - e_{kp}e_i$. 
    
    We define the inclusion $\iota_r:A \to R^{S_n}$ by
    $$
    \iota_r(e_j') = 
    \begin{cases}
        e_j & \text{if }j\equiv i\bmod p, \text{ where }i = 0\text{ or }r < i \leq p-1\\
        e_{(d+1)p + i} - e_{(d+1)p}e_i & \text{if }j = dp+i,\text{ where }1 \leq i \leq r
    \end{cases}.
    $$
    Then, the ideal $\langle f_0, f_1, \ldots, f_{p-1}\rangle\subset R^{S_n}[t]$ is equal to the ideal $\langle g_0, g_1,\ldots,g_{p-1}\rangle$, where $$g_i = f^{(0)}(\iota_r(e_1'), \ldots, \iota_r(e_{qp}'))$$
    as in (\ref{eq: f_1..f_k}). In particular, the elimination ideals $\langle f_0, f_1,\ldots, f_{p-1}\rangle\cap R^{S_n}$ and $\langle g_0, g_1,\ldots,g_{p-1}\rangle\cap R^{S_n}$ are also equal. Moreover, $I_{qp+r}$ is the extension of the ideal $$I_{qp}=\langle f_0^{(0)}(e_1', \ldots, e_{qp}'),\ldots,f^{(0)}_{p-1}(e_1', \ldots, e_{qp}')\rangle \cap A$$ from $A$ to $S^{qp+r}$, via the map $\iota_r$.
    
    We now show that $R^{S_n}$ is a flat $A$-algebra. Consider the $\mathbb{N}$-grading on $R^{S_{n}}$ where $\deg(e_i) = i$. We may put an $\mathbb{N}$-grading on $A$ with $\deg(e_i') := \deg(\iota_r(e_i'))$, so that $\iota_r$ is homogeneous. Note that this grading on $A$ depends on $r$, but after taking degrees mod $p$ there is no dependence on $r$. Now since $A$ is $\mathbb{N}$-graded with $A_0 = \kk$ and $R^{S_n}$ is an $\mathbb{N}$-graded $A$-module, by \cite[Exercise 6.10]{EisenbudCA}, $R^{S_n}$ is a flat $A$-module if and only if the localization $R^{S_n}_P$ is flat, where $P = \langle e_1',\ldots,e_{qp}'\rangle$ is the homogeneous maximal ideal of $A$. By \cite[Exercise 18.18]{EisenbudCA}, $R^{S_n}_P$ is flat since the images of $e_1',\ldots,e_{qp}'$ under $\iota_r$ form a regular sequence. By flatness, the map $R^{S_n}\otimes_A I_{qp} \to I_{n}$ is an isomorphism.  
\end{proof}

\begin{cor}\label{cor: syzygy stability}
    Fixing $q\geq0$, let $\cF$ be a graded minimal free resolution of $I_{qp}$ over $A$. Then for each $0\leq r \leq p-1$, the complex $R^{S_{qp+r}}\otimes_{A}\cF$ gives a graded minimal free resolution of $I_{qp+r}$ over $R^{S_{qp+r}}$. In particular, the $\ZZ/p\ZZ$-graded Betti numbers of $I_{qp+r}$ over $R^{S_{qp+r}}$ are the same for all $r$.
\end{cor}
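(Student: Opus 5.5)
The plan is to deduce the corollary directly from Theorem \ref{thm: ideal stability}, using flatness to transport exactness and the graded structure of $\iota_r$ to transport minimality.

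\textbf{Exactness.} Fix $0\le r\le p-1$ and put $B = R^{S_{qp+r}}$, regarded as an $A$-algebra via $\iota_r$. Let $\cF$ be a graded minimal free resolution of $I_{qp}$ over $A$, with augmentation $\cF_0\to I_{qp}$. By part (i) of Theorem \ref{thm: ideal stability}, $B$ is flat over $A$. Hence $B\otimes_A -$ is exact, so $B\otimes_A\cF$ is an exact complex of free $B$-modules. It is augmented to $B\otimes_A I_{qp}$, and part (ii) identifies this module with $I_{qp+r}$. The identification should be checked to be the natural multiplication map $B\otimes_A I_{qp}\to I_{qp+r}$, $b\otimes a\mapsto b\,\iota_r(a)$, whose image is the extended ideal $\iota_r(I_{qp})B = I_{qp+r}$. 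So $B\otimes_A\cF$ resolves $I_{qp+r}$. Each $B\otimes_A A(-j)$ is a free $B$-module $B(-j)$, so the resolution is free and graded.

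\textbf{Gradings.} I will use the $\mathbb{N}$-grading on $A$ introduced in the proof of Theorem \ref{thm: ideal stability}, with $\deg e_i' = \deg \iota_r(e_i')$. This makes $\iota_r$ homogeneous, so $\cF$ is graded over $A$ in this grading and tensoring gives an $\mathbb{N}$-graded resolution over $B$. Reducing degrees mod $p$ recovers the $\ZZ/p\ZZ$-grading, which by the theorem does not depend on $r$.

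\textbf{Minimality.} Let $\mathfrak m_A = \langle e_1',\dots,e_{qp}'\rangle$ and $\mathfrak m_B = \langle e_1,\dots,e_{qp+r}\rangle$. Minimality of $\cF$ means every differential matrix has entries in $\mathfrak m_A$. Since $\iota_r$ is homogeneous and sends each $e_j'$ to a positive-degree element, $\iota_r(\mathfrak m_A)\subseteq\mathfrak m_B$. The differentials of $B\otimes_A\cF$ are the matrices with $\iota_r$ applied entrywise, so their entries lie in $\mathfrak m_B$ and the resolution is minimal.

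\textbf{Betti numbers.} The $B$-module $B\otimes_A\cF_i$ has exactly the same shifts as $\cF_i$ (as $\ZZ/p\ZZ$-degrees), so $\beta_{i,j}^B(I_{qp+r}) = \beta_{i,j}^A(I_{qp})$ for every $j\in\ZZ/p\ZZ$. The right side is independent of $r$, which gives the final claim.

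The main point needing care is that the $\ZZ/p\ZZ$-grading is what makes the comparison independent of $r$. The $\mathbb{N}$-grading on $A$ varies with $r$, so $\mathbb{N}$-graded shifts may differ across $r$, and only their residues mod $p$ are asserted to agree. A second point to verify is that the isomorphism in (ii) is the natural multiplication map. Flatness makes this map injective, and the extension statement in the proof of the theorem makes it surjective.
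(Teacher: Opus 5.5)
Your proof is correct and is essentially the argument the paper has in mind, since the paper states the corollary as an immediate consequence of Theorem \ref{thm: ideal stability}: flatness of $\iota_r$ gives exactness of $R^{S_{qp+r}}\otimes_A\cF$, part (ii) identifies the augmented module with $I_{qp+r}$, and $\iota_r(\mathfrak{m}_A)\subseteq\mathfrak{m}_B$ preserves minimality. One small point: homogeneity of $\iota_r$ does not by itself make a fixed $\cF$ graded for the $r$-dependent $\mathbb{N}$-grading, so that inference is unjustified as written. You do not need that detour, because $\iota_r$ is already homogeneous for the $r$-independent $\ZZ/p\ZZ$-gradings, and that alone makes $R^{S_{qp+r}}\otimes_A\cF$ a $\ZZ/p\ZZ$-graded minimal resolution with the same shifts as $\cF$.
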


\begin{example}
    Consider $p = 3$ and $q = 2$. For each $r = 0,1,2$, let $f_0^{(r)}, f_1^{(r)}, f_2^{(r)}\in R^{S_{qp+r}}[t]$ be as in (\ref{eq: f_1..f_k}), so that $$I_{qp+r} = I_{6+r} = \langle f_0^{(r)},f_1^{(r)}, f_2^{(r)}\rangle \cap R^{S_{6+r}}.$$ These polynomials are listed below.
    
    \begin{minipage}{0.3\textwidth}
    \begin{align*}
        f_0^{(0)} &= t^2 + e_3t + e_6\\
        f_1^{(0)} &= e_1t + e_4 \\
        f_2^{(0)} &= e_2t + e_5
    \end{align*}
    \end{minipage}
    \begin{minipage}{0.35\textwidth}
       \begin{align*}
        f_0^{(1)} &= t^2 + e_3t + e_6\\
        f_1^{(1)} &= e_1t^2 + e_4t + e_7 \\
        f_2^{(1)} &= e_2t + e_5
    \end{align*} 
    \end{minipage}
    \begin{minipage}{0.35\textwidth}
        \begin{align*}
        f_0^{(2)} &= t^2 + e_3t + e_6\\
        f_1^{(2)} &= e_1t^2 + e_4t + e_7 \\
        f_2^{(2)} &= e_2t^2 + e_5t + e_8
    \end{align*}
    \end{minipage}
We now apply the replacements $f_k^{(r)}\mapsto f_k'^{(r)}$ as in the proof of Theorem \ref{thm: ideal stability}. The polynomials which have been replaced are indicated in bold.

\begin{minipage}{0.3\textwidth}
    \begin{align*}
        f_0^{(0)} &= t^2 + e_3t + e_6\\
        f_1^{(0)} &= e_1t + e_4 \\
        f_2^{(0)} &= e_2t + e_5
    \end{align*}
\end{minipage}
\begin{minipage}{0.35\textwidth}
    \begin{align*}
        f_0^{(1)} &= t^2 + e_3t + e_6\\
        \boldsymbol{f_1'^{(1)}} &= \boldsymbol{(e_4-e_1e_3)t + (e_7-e_1e_6)} \\
        f_2^{(1)} &= e_2t + e_5
    \end{align*}
\end{minipage}
\begin{minipage}{0.35\textwidth}
    \begin{align*}
        f_0^{(2)} &= t^2 + e_3t + e_6\\
        \boldsymbol{f_1'^{(2)}} &= \boldsymbol{(e_4-e_1e_3)t + (e_7-e_1e_6)} \\
        \boldsymbol{f_2'^{(2)}} &= \boldsymbol{(e_5 - e_2e_3)t + (e_8 - e_2e_6)}
    \end{align*}
\end{minipage}

\vspace{5pt}

The maps $\iota_r:\kk[e_1', \ldots, e_6'] \to \kk[e_1,\ldots,e_{6 + r}]$ are then given by the following matrices.

\vspace{5pt}

\begin{minipage}{0.3\textwidth}
    $$\begin{pmatrix}
        e'_1 \\ e'_2 \\ e'_3 \\ e'_4 \\ e'_5 \\ e'_6
    \end{pmatrix}    \overset{\iota_0}{\longmapsto}
    \begin{pmatrix}
        e_1 \\ e_2 \\ e_3 \\ e_4 \\ e_5 \\ e_6
    \end{pmatrix}$$
\end{minipage}
\begin{minipage}{0.35\textwidth}
    $$\begin{pmatrix}
        e'_1 \\ e'_2 \\ e'_3 \\ e'_4 \\ e'_5 \\ e'_6
    \end{pmatrix}    \overset{\iota_1}{\longmapsto}
    \begin{pmatrix}
        e_4 - e_1e_3 \\ e_2 \\ e_3 \\ e_7 - e_1e_6 \\ e_5 \\ e_6
    \end{pmatrix}$$
\end{minipage}
\begin{minipage}{0.35\textwidth}
    $$\begin{pmatrix}
        e'_1 \\ e'_2 \\ e'_3 \\ e'_4 \\ e'_5 \\ e'_6
    \end{pmatrix}    \overset{\iota_2}{\longmapsto}
    \begin{pmatrix}
        e_4 - e_1e_3 \\ e_5 - e_2e_3 \\ e_3 \\ e_7 - e_1e_6 \\ e_8 - e_2e_6 \\ e_6
    \end{pmatrix}$$
\end{minipage}
\end{example}

\section{Generators as minors of a matrix}\label{sec: determinantal conjecture}
We now consider the elimination ideal $I = \langle f_0,f_1,\ldots,f_{p-1}\rangle \cap \kk[e_1,\ldots,e_n]$ in its own right. In particular, we let $\kk$ be any field and let $p\geq3$ be any integer. Thus, we simply consider the conditions on the coefficients $(e_1,\ldots,e_{n})$ of $f_0,\ldots,f_{p-1}$ which would imply that the $f_i$ share a common root.

The assumptions that $p$ is prime and that $\kk = \FF_p$ were needed to establish that the image of the transfer is equal to the elimination ideal $I$. However, the arguments in the proof of Theorem \ref{thm: ideal stability} do not require these assumptions. Hence, when considering the elimination ideal $I$, it suffices to consider $n = qp$ for $q\geq0$. Thus, we have $f_0$ monic of degree $q$ and $f_1,\ldots,f_{p-1}$ generic of degree $q-1$ as in (\ref{eq: f_1..f_k}). 

Let $A_0, A_1, \ldots, A_{p-1}$ be the matrices
\begin{equation}\label{eq: sylvester matrices}
    A_0 :=
\begin{pmatrix}
    1  & 0 & \cdots & 0\\
    e_{p} & 1 & \cdots & 0\\
    e_{2p}  & e_{p} & \cdots & 0\\
    \vdots & \vdots & \ddots  & \vdots \\
    e_{qp}  & e_{(q-1)p} &  \cdots & 1\\
    0  & e_{e_{qp}}&  \cdots & \vdots\\
    \vdots  & \vdots & \cdots & e_{(q-1)p} \\
    0 & 0 & \cdots & e_{qp}
\end{pmatrix} \qquad 
A_i :=
\begin{pmatrix}
    e_{i}  & 0 & \cdots & 0\\
    e_{p+i} & e_{i} & \cdots & 0\\
    e_{2p+i}  & e_{p+i} & \cdots & 0\\
    \vdots & \vdots & \ddots  & \vdots \\
    e_{(q-1)p + i}  & e_{(q-2)p + i} &  \cdots & e_{i}\\
    0  & e_{(q-1)p+i}&  \cdots & \vdots\\
    \vdots  & \vdots & \cdots & e_{(q-2)p + i} \\
    0 & 0 & \cdots & e_{(q-1)p + i}
\end{pmatrix}
\end{equation}
where $A_0$ has $q-1$ columns and all other $A_i$ have $q$ columns. Then, for each $i\neq0$, the block matrix $[A_0 \,\,\, A_i]$ is the Sylvester matrix whose determinant computes the resultant of $f_0$ and $f_i$. Let $A$ be the $(2q-1)\times ((q-1) + (p-1)q)$ block matrix
\begin{equation}\label{eq: matrix A}
    A:= \big[A_0 \,\,\, A_1 \,\, \cdots \,\, A_{p-1}\big].
\end{equation}
Let $J$ denote the ideal of maximal minors of $A$. We first observe that any maximal minor of $A$ must vanish whenever $f_0, f_1, \ldots, f_{p-1}$ have a common root.

\begin{prop}\label{prop: J contained in I}
    There is a containment $\sqrt{J}\subseteq \sqrt{I}$. When $\kk =\FF_p$ for $p$ prime, then $\sqrt{J}\subseteq I$.
\end{prop}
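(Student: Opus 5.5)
The plan is to show $V(I)\subseteq V(J)$ over an algebraic closure $\overline{\kk}$, and then apply the Nullstellensatz. Here $V(I)$ is the Zariski closure of the projection of $V(f_0,\ldots,f_{p-1})$ to the coefficient space. Since $f_0$ is monic in $t$, the extension $\kk[e]\to \kk[e][t]/\langle f_0,\ldots,f_{p-1}\rangle$ is integral. So the projection of $V(\langle f_0,\ldots,f_{p-1}\rangle)$ is closed, and it equals $V(I)$ by the closure theorem for elimination ideals. It therefore suffices to show the following: if a point $e\in\overline{\kk}^{qp}$ has the property that the specialized polynomials $f_0,\ldots,f_{p-1}$ share a common root $\alpha\in\overline{\kk}$, then every maximal minor of $A$ vanishes at $e$. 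Once this holds, $V(I)\subseteq V(J)$ gives $\sqrt{J}=I(V(J))\subseteq I(V(I))=\sqrt{I}$.

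For the key step, use the row vector $v=(\alpha^{2q-2},\alpha^{2q-3},\ldots,\alpha,1)$ of length $2q-1$. The $j$-th column of $A_0$ holds the coefficients of $t^{q-2-(j-1)}f_0$, shifted down by $j-1$ rows and written in the basis $t^{2q-2},\ldots,1$. Hence $v$ times that column is $\alpha^{q-1-j}f_0(\alpha)=0$. The analogous statement holds for the columns of each $A_i$, $i\ge1$: these are the coefficient vectors of $t^{q-j}f_i$, which has degree at most $2q-2$, so pairing with $v$ gives $\alpha^{q-j}f_i(\alpha)=0$. The main thing to check carefully here is the index bookkeeping, namely that the row positions match the powers of $t$ within the $(2q-1)$ rows. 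Granting that, $vA=0$ with $v\neq 0$, since the last entry of $v$ is $1$. So $A(e)$ has rank less than $2q-1$, and all its maximal minors vanish. This proves $\sqrt J\subseteq\sqrt I$.

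For the second claim, assume $\kk=\FF_p$ with $p$ prime. By Theorem \ref{thm: transfer ideal is elimination ideal}, $I=I_{qp}=\Im(\Tr^{S_{qp}})$, and this ideal is radical by the result of Shank--Wehlau quoted after Theorem \ref{thm: feschbach}. Therefore $\sqrt I=I$, and the first part gives $\sqrt J\subseteq I$.

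The step I expect to need the most care is the geometric passage from "common root exists" to membership in $\sqrt I$. Since we work over $\overline{\kk}$ while $I$ is defined over $\kk$, we use that radicals and elimination commute with the field extension, $(J_{\overline\kk}\cap\overline\kk[e])\cap\kk[e]=I$, and that the maximal minors have coefficients in $\kk$. Monicity of $f_0$ makes the projection closed, so no further closure argument is needed.
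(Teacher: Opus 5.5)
Your proof is correct and follows the same route as the paper. In both arguments, a vector of powers of a common root $\alpha$ lies in the left kernel of $A$, so every maximal minor vanishes on $V(I)$, and radicality over $\FF_p$ comes from Shank--Wehlau via Theorem \ref{thm: transfer ideal is elimination ideal}. Your extra steps also fill in two points the paper passes over: passing to $\overline{\kk}$ and using that $f_0$ is monic to lift points of $V(I)$ to common roots justifies a step the paper asserts without comment, and your ordering $v=(\alpha^{2q-2},\ldots,\alpha,1)$ is the one that matches the displayed matrices (the paper's $w=[1,\alpha,\ldots,\alpha^{2q-2}]$ is reversed).
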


\begin{proof}
    Suppose $(e_1,\ldots,e_{qp})\in V(I)\subset \kk^{qp}$, so that the polynomials $f_0,\ldots,f_{p-1}$ have some common root $\alpha\in\kk$. Let $v$ be some column of $A$. Then the nonzero entries of $v$ are the coefficients of $f_i$ for some $i$, and the dot product of $v$ with the vector $w:=\left[1, \alpha, \alpha^2,\ldots, \alpha^{2q-2}\right]$ is $\alpha^jf_i(\alpha)$ for some $j$, so it is equal to 0. Hence, given any square submatrix of $A$, the linear combination of its rows with coefficients equal to the entries of $w$ is zero. This submatrix therefore does not have full rank and its determinant is zero. This shows that $V(I)\subseteq V(J)$, i.e. $\sqrt{J}\subseteq \sqrt{I}$. When $p$ is prime and $\kk = \mathbb{F}_p$, the ideal $I$ is equal to the image of the transfer $I_{qp}$, which is radical by \cite[Thm 6.1]{Shank--Wehlau}.
\end{proof}

\begin{conj}\label{conj: determinantal ideal is elimination ideal}
    The elimination ideal $I$ is equal to the determinantal ideal $J = \mathcal{I}_{2q-1}(A)$.
\end{conj}

This conjecture has been verified for $p = 3$ with $q\leq 6$ and for some larger values of $p$ with limited $q$.

\subsection{Set theoretic description}

We show that when $\kk$ is algebraically closed, the elimination ideal $I$ is generated up to radical by certain sums of the minors of $A$.

\begin{lemma}
    The polynomials $f_0, f_1, \ldots, f_{p-1}$ as in (\ref{eq: f_1..f_k}) have a common root in the algebraic closure $\overline{\kk}$ if and only if the polynomials $f_0$ and $h_\mu$ have a common root in $\overline{\kk}$ for all $\mu = (\mu_1,\ldots\mu_{p-1})\in \overline{\kk}^{p-1}$, where 
    $$h_\mu = \sum_{i=1}^{p-1} \mu_if_i.$$
\end{lemma}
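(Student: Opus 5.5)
The forward direction is immediate. If $\alpha\in\overline{\kk}$ is a common root of $f_0,\ldots,f_{p-1}$, then $h_\mu(\alpha)=\sum_i\mu_if_i(\alpha)=0$ for every $\mu$. So $\alpha$ is a common root of $f_0$ and every $h_\mu$.

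For the converse we argue by contrapositive. The key observation is that $f_0$ is monic of degree $q$, so it has only finitely many roots in $\overline{\kk}$. Call them $\alpha_1,\ldots,\alpha_m$ with $m\le q$, taken without multiplicity. Suppose no $\alpha_j$ is a common root of $f_1,\ldots,f_{p-1}$. Then for each $j$ the vector
$$v_j=(f_1(\alpha_j),\ldots,f_{p-1}(\alpha_j))\in\overline{\kk}^{p-1}$$
is nonzero. We must produce a single $\mu$ with $h_\mu(\alpha_j)=\mu\cdot v_j\neq 0$ for all $j$. Each condition $\mu\cdot v_j=0$ cuts out a proper hyperplane $H_j\subset\overline{\kk}^{p-1}$, since $v_j\ne0$.

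The only real step is to show that a finite union of proper hyperplanes cannot cover $\overline{\kk}^{p-1}$. This holds because $\overline{\kk}$ is infinite. Concretely, the product $\prod_j(\mu\cdot v_j)$ is a nonzero polynomial in $\mu$, being a product of nonzero linear forms in a domain. A nonzero polynomial over an infinite field does not vanish identically on affine space, so some $\mu\in\overline{\kk}^{p-1}$ avoids all $H_j$. For this $\mu$, no root of $f_0$ is a root of $h_\mu$, which contradicts the hypothesis.

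Two degenerate situations need a quick check.
\begin{itemize}
\item If $q=0$, then $f_0=1$ has no roots, and both sides of the equivalence are false.
\item If all the $f_i$ vanish identically, then $h_\mu=0$ for every $\mu$, and any root of $f_0$ is a common root of all the $f_i$, so both sides are true.
\end{itemize}
The argument above already handles the general case uniformly, since it only uses the finiteness of the root set of the nonzero polynomial $f_0$. The main obstacle is thus only the hyperplane-avoidance step, and it is routine over an infinite field. This is exactly why the statement is taken over $\overline{\kk}$ rather than $\kk$, which may be finite, such as $\FF_p$.
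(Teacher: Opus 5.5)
Your proof is correct and follows the paper's argument: the forward direction by linearity, and the converse by contrapositive, writing the set of bad $\mu$ as a finite union of proper hyperplanes $\{\mu : h_\mu(\alpha_j)=0\}$ indexed by the roots of $f_0$, and then using that $\overline{\kk}$ is infinite. Your product-of-linear-forms argument spells out the hyperplane-avoidance step that the paper only asserts, and it also correctly handles the degenerate case $q=0$.
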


\begin{proof}
    The forward direction is clear. We prove the contrapositive of the backward direction. Let 
    $$V = \left\{(\mu_1,\ldots,\mu_{p-1}):h_\mu, f_0 \text{ have a common root }\right\}\subset \overline{\kk}^{p-1}.$$
    Let $\alpha_1, \ldots, \alpha_q\in\overline{\kk}$ be the roots of $f_0$. Then $\mu\in V$ if and only if $h_\mu(\alpha_i) = 0$ for some $\alpha_i$. Thus, 
    $V = \bigcup_{i=1}^q V_i$, where
    $$V_i = \{\mu\in\kk^{p-1} : h_\mu(\alpha_i) = 0\}.$$
    If $f_0, f_1, \ldots, f_{p-1}$ have no common root, then for each $i$, the space $V_i$ is a hyperplane in $\overline{\kk}^{p-1}$ defined by the nonzero linear condition
    $$f_1(\alpha_i)\mu_1 + f_2(\alpha_i)\mu_2 + \cdots + f_n(\alpha_i)\mu_{p-1} = 0.$$
    Since $\overline{\kk}$ is infinite, one can find a point $\mu\notin\overline{\kk}^{p-1}\setminus V$.
\end{proof}

Hence, $f_0,f_1,\ldots,f_{p-1}$ have a common root if and only if the resultant $R(f_0, h_\mu)$ vanishes for all $\mu\in\kk^{p-1}$. This resultant is a homogeneous, degree $q$ polynomial $\sum g_\alpha\mu^\alpha$ in $\mu_1,\ldots,\mu_{p-1}$ which vanishes only if each coefficient $g_\alpha$ vanishes. 

For $1 \leq i \leq {p-1}$ and $1 \leq j \leq q$, consider the column vector 
$$v_{i,j} = (\underbrace{0, \ldots, 0}_{i-1}, e_i, e_{p+i}, \ldots e_{(q-1)p + i}, \underbrace{0, \ldots, 0}_{q-i})^T.$$
In other words, $v_{i,j}$ is the $j^{\text{th}}$ column vector of the matrix $A_i$ as in (\ref{eq: sylvester matrices}). Similarly, for $i = 0$ and $1\leq j \leq q-1$, let $v_{0,j}$ be the column vector
$$v_{0,j} = (\underbrace{0,\ldots,0}_{i-1}, 1, e_p, e_{2p},\ldots, e_{qp}, \underbrace{0, \ldots, 0}_{q-1-i})^T,$$
so that the resultant $R(f_0, f_i)$ (for $1\leq i \leq p-1$) is
$$\det\left(v_{0,1} \, \cdots \, v_{0, q-1} \, v_{i,1}\, \ldots, v_{i,q}\right).$$

\begin{prop}\label{prop: set theoretic description}
    For a monomial $\mu^\alpha$ of degree $q$, the coefficient of $\mu^\alpha$ in $R(f_0, h_\mu)$ is
    \begin{equation}\label{eq: mixed resultants}
    \sum_\beta \det(v_{0,1} \, \cdots\, v_{0,q-1} \, v_{\beta_1,1} \, v_{\beta_2,2}\, \cdots \, v_{\beta_q,q}),
    \end{equation}
    where the sum is taken over all distinct rearrangements $(\beta_1,\ldots,\beta_q)$ of the integer vector$$(\underbrace{1,\ldots,1}_{\alpha_1\text{ times}}, \underbrace{2,\ldots,2}_{\alpha_2\text{ times}}, \ldots, \underbrace{p-1,\ldots,p-1}_{\alpha_{p-1}\text{ times}}).$$
\end{prop}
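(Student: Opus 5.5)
The plan is to write $R(f_0,h_\mu)$ as a Sylvester determinant and expand it by multilinearity in the columns. Since $h_\mu=\sum_{i=1}^{p-1}\mu_i f_i$ has degree at most $q-1$ and $f_0$ is monic of degree $q$, the Sylvester matrix of $(f_0,h_\mu)$ has the same shape as $[A_0\;A_i]$. Its first $q-1$ columns are $v_{0,1},\ldots,v_{0,q-1}$. Its last $q$ columns are the shifted coefficient vectors of $h_\mu$. Because the coefficients of $h_\mu$ depend linearly on $\mu$, the $j$-th of these columns equals $\sum_{i=1}^{p-1}\mu_i v_{i,j}$ for $1\le j\le q$. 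This is the same convention used just before the proposition to write $R(f_0,f_i)=\det(v_{0,1}\,\cdots\,v_{0,q-1}\,v_{i,1}\,\cdots\,v_{i,q})$. So
$$R(f_0,h_\mu)=\det\Big(v_{0,1}\,\cdots\,v_{0,q-1}\,\textstyle\sum_{i}\mu_i v_{i,1}\,\cdots\,\sum_i\mu_i v_{i,q}\Big).$$
One point needs care: $h_\mu$ may have degree less than $q-1$ for special $\mu$. I would treat $\mu$ as indeterminates, so that $R(f_0,h_\mu)$ is defined formally as this $(2q-1)\times(2q-1)$ determinant, with $h_\mu$ regarded as having formal degree $q-1$. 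Because $f_0$ is monic, this formal determinant still vanishes exactly when $f_0$ and $h_\mu$ share a root, which is the sense in which the preceding discussion uses it.

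Next I expand the determinant multilinearly in each of the last $q$ columns:
$$R(f_0,h_\mu)=\sum_{\beta\in\{1,\ldots,p-1\}^q}\mu_{\beta_1}\mu_{\beta_2}\cdots\mu_{\beta_q}\,\det(v_{0,1}\,\cdots\,v_{0,q-1}\,v_{\beta_1,1}\,\cdots\,v_{\beta_q,q}).$$
The monomial $\mu_{\beta_1}\cdots\mu_{\beta_q}$ equals $\mu^\alpha$ exactly when $\beta$ contains each value $i$ exactly $\alpha_i$ times. Equivalently, $\beta$ is a rearrangement of $(1^{\alpha_1},\ldots,(p-1)^{\alpha_{p-1}})$. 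Each such sequence $\beta$ appears exactly once in the sum, so collecting terms gives formula (\ref{eq: mixed resultants}), summed over distinct rearrangements with no multiplicities. This also shows that $R(f_0,h_\mu)$ is homogeneous of degree $q$ in $\mu$, as claimed earlier.

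I do not expect a genuine obstacle; this is a bookkeeping argument. The only step needing care is the identification of the Sylvester columns of $h_\mu$ with $\sum_i\mu_i v_{i,j}$. This requires the shifts in $v_{i,j}$ (which rows hold the zeros) to match the column positions in $[A_0\;A_i]$ consistently for every $i$. It does, because all of $A_1,\ldots,A_{p-1}$ have the same banded shape; only the entries differ.
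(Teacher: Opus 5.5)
Your proposal is correct and follows the paper's argument: the paper also writes $R(f_0,h_\mu)$ as the Sylvester determinant whose last $q$ columns are $\sum_i \mu_i v_{i,j}$ (phrased as a wedge product in $\bigwedge^{2q-1}V$), expands by multilinearity, and collects the terms with $\mu_{\beta_1}\cdots\mu_{\beta_q}=\mu^\alpha$. Your remark about giving $h_\mu$ formal degree $q-1$, justified because $f_0$ is monic, is a sensible clarification the paper leaves implicit.
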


\begin{proof}
    We identify a determinant as in (\ref{eq: mixed resultants}) with an element of the exterior power $\bigwedge^{2q-1}V$, where $V$ is the $\kk$-vector space with basis $\{v_{i,j}\}$ as described above. Then the resultant $R(f_0, h_\mu)$ is identified with the element
    \begin{align*}
        \left(v_{0,1}\wedge \cdots \wedge v_{0, q-1}\right) \wedge
    \left(\mu_1v_{1,1} + \cdots + \mu_{p-1} v_{p-1,1} \right) \wedge
    \left(\mu_1 v_{1,2} + \cdots + \mu_{p-1}v_{p-1,2}\right) \wedge \\\cdots \wedge
    \left(\mu_1 v_{1,q} + \cdots + \mu_{p-1} v_{p-1,q}\right).
    \end{align*}
    
    Using multilinearity and expanding in terms of the basis of wedges of the $v_{i,j}$ yields the result.
\end{proof}

\begin{cor}
    The elimination ideal $I = \langle f_0, \ldots, f_{p-1}\rangle \cap \kk[e_1,\ldots,e_{qp}]$ agrees up to radical with the ideal generated by the polynomials (\ref{eq: mixed resultants}), as $\alpha$ ranges over all monomials in $\mu_1,\ldots,\mu_{p-1}$ of degree $q$.
\end{cor}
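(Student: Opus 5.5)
The plan is to compare zero loci over $\overline{\kk}$ and then invoke the Nullstellensatz. Let $M$ be the ideal generated by the polynomials (\ref{eq: mixed resultants}), as $\alpha$ ranges over the degree-$q$ monomials in $\mu_1,\ldots,\mu_{p-1}$. Both $I$ and $M$ live in $\kk[e_1,\ldots,e_{qp}]$. Radicals of ideals are compatible with extension of scalars in the sense needed here, since $\sqrt{\mathfrak a}=\sqrt{\mathfrak a\,\overline{\kk}[e]}\cap \kk[e]$. It therefore suffices to show $V(I)=V(M)$ as subsets of $\overline{\kk}^{qp}$.

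First we identify $V(I)$. Because $f_0$ is monic in $t$, the projection from $V(\langle f_0,\ldots,f_{p-1}\rangle)\subset \overline{\kk}^{qp}\times\overline{\kk}$ to $\overline{\kk}^{qp}$ is a finite morphism, so its image is closed. By the closure theorem of elimination theory, $V(I)$ is the closure of this image. Combining the two, $V(I)$ is exactly the set of coefficient vectors $e$ for which $f_0,\ldots,f_{p-1}$ have a common root in $\overline{\kk}$. Equivalently, one can invoke the extension theorem, which applies because the leading coefficient of $f_0$ is $1$.

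Next, fix such a point $e$. By the preceding lemma, the $f_i$ have a common root if and only if $f_0$ and $h_\mu$ have a common root for every $\mu\in\overline{\kk}^{p-1}$. We then need that $R(f_0,h_\mu)=0$ holds exactly when $f_0$ and $h_\mu$ share a root. The point requiring care is that $h_\mu$ may have degree strictly less than $q-1$, so the Sylvester determinant must be treated as a formal resultant of degrees $(q,q-1)$. Because $f_0$ is monic of degree $q$, the formal resultant equals, up to sign, $\prod_{f_0(\alpha_k)=0} h_\mu(\alpha_k)$, and this product vanishes exactly when there is a common root. This also covers $h_\mu=0$, where both the resultant and the common-root condition hold trivially.

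So $e\in V(I)$ if and only if the polynomial $\mu\mapsto R(f_0,h_\mu)$ vanishes identically on $\overline{\kk}^{p-1}$. Since $\overline{\kk}$ is infinite, this happens exactly when every coefficient of this polynomial in $\mu$ vanishes. By Proposition \ref{prop: set theoretic description}, those coefficients are the expressions (\ref{eq: mixed resultants}) evaluated at $e$. Hence $V(I)=V(M)$, and the Nullstellensatz gives $\sqrt I=\sqrt M$.

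The main obstacle is the degenerate-degree issue in the resultant step. I would handle it by writing out the product formula for a resultant in which one polynomial is monic; this avoids any assumption on the leading coefficient of $h_\mu$. The descent from $\overline{\kk}$ to $\kk$ in the first paragraph is routine.
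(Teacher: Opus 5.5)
Your proposal is correct and follows the same route as the paper. You use the preceding lemma to reduce to $f_0$ and $h_\mu$ sharing a root for every $\mu$, express this as the identical vanishing of $R(f_0,h_\mu)$ in $\mu$, read off its coefficients from Proposition \ref{prop: set theoretic description}, and conclude by the Nullstellensatz. Along the way you make explicit three points the paper leaves implicit (it simply assumes $\kk$ algebraically closed): that $V(I)$ is exactly the common-root locus because $f_0$ is monic, that the Sylvester determinant still detects common roots when $\deg h_\mu < q-1$, and the descent from $\overline{\kk}$ to $\kk$.
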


\section{Initial terms of the determinantal ideal}\label{sec: initial ideal}

Let $\kk$ be any field, and let $p\geq3$ be any integer (not necessarily prime). Consider $f_0,\ldots,f_{p-1}$ as in (\ref{eq: f_1..f_k}). In this section, we will show that the ideal
\begin{equation}\label{eq: def of initial ideal}
L := \bigcap_{j=1}^{(q-1)p + 1}
\langle e_j, e_{j+1}, \ldots, e_{j+p-2} \rangle
\end{equation}
is contained in the initial ideal of $I$ under the grevlex order on $S:=\kk[e_1,\ldots,e_{qp}]$. More specifically, we show that any maximal minor of the matrix $A$ from (\ref{eq: matrix A}) lies in $L$. We first note a nice combinatorial characterization of when a monomial of $S$ lies in $L$.

\begin{defn}\label{def: large gap}
    Let $m = e_1^{a_1}\cdots a_{qp}^{a_{qp}}\in S$ be a monomial. Say that $m$ has a \textit{large gap} if its support vector $(a_1,\ldots,a_{qp})$ has at least $p-1$ consecutive $0$ entries.
\end{defn}

\begin{lemma}\label{lem: gap monomials}
    A monomial $m = e_1^{a_1}\cdots e_{qp}^{a_{qp}}$ lies in $L$ if and only if $m$ does not have a large gap.
\end{lemma}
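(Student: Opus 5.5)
The plan is to reduce membership in $L$ to membership in each of the monomial prime ideals being intersected, and then match the windows $\{j,\ldots,j+p-2\}$ against runs of zeros in the support vector. Since $L$ is an intersection of monomial ideals, it is itself a monomial ideal, and a monomial $m$ lies in $L$ if and only if it lies in every $P_j := \langle e_j, e_{j+1},\ldots,e_{j+p-2}\rangle$ for $1\le j\le (q-1)p+1$. Each $P_j$ is generated by variables, so $m\in P_j$ if and only if some $e_k$ with $j\le k\le j+p-2$ divides $m$, that is, $a_k>0$ for some $k$ in that window. Thus $m\notin L$ exactly when there is some $j$ with $1\le j\le (q-1)p+1$ such that $a_j=a_{j+1}=\cdots=a_{j+p-2}=0$.

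Next I will check that the range of $j$ is precisely the range of starting positions of length-$(p-1)$ windows inside $\{1,\ldots,qp\}$. The last window must end at index $qp$, so its start is $j_{\max} = qp-(p-2) = (q-1)p+2$. This means I should compare the index set carefully with the statement. If the intersection genuinely runs only up to $(q-1)p+1$, then the window $\{(q-1)p+2,\ldots,qp\}$ is omitted, and a monomial whose only zero run is the final $p-1$ coordinates would lie in $L$ yet have a large gap. I expect the main obstacle to be exactly this bookkeeping: either the intended range is $1\le j\le qp-p+2$, or the definition of ``large gap'' implicitly excludes a terminal run. I would state the lemma with the consistent indexing, namely all $j$ with $j+p-2\le qp$, noting the off-by-one if it is present.

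With the indexing fixed, the equivalence is immediate. A monomial has a large gap if and only if its support vector contains $p-1$ consecutive zeros $a_j=\cdots=a_{j+p-2}=0$ for some admissible $j$. That holds if and only if $m\notin P_j$ for that $j$, which holds if and only if $m\notin L$. Taking contrapositives yields the lemma. No earlier results from the paper are needed beyond the standard fact that monomial membership in an intersection of monomial ideals is checked componentwise, and that a monomial lies in an ideal generated by variables iff it is divisible by one of them.
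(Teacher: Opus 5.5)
Your argument is the same as the paper's: membership in $L$ is checked one window at a time, and a monomial lies in $\langle e_j,\ldots,e_{j+p-2}\rangle$ exactly when some $a_k>0$ with $j\le k\le j+p-2$. Your off-by-one observation is also correct, and it is a real defect in the statement. The paper's proof has the same flaw: its forward direction uses the window containing the zero run without checking that $j\le (q-1)p+1$. For example, with $p=3$ and $q=2$, the monomial $e_2e_4$ lies in $L=\langle e_1,e_2\rangle\cap\langle e_2,e_3\rangle\cap\langle e_3,e_4\rangle\cap\langle e_4,e_5\rangle$, yet $a_5=a_6=0$.

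However, the repair you commit to, enlarging the intersection to all $j\le (q-1)p+2$, is the wrong one for this paper. The range $j\le (q-1)p+1$ is deliberate. In Section 5, $L$ is written out explicitly as ending with $\langle e_{p+1},\ldots,e_{2p-1}\rangle$. Also, the proof of Proposition \ref{prop: antidiagonal term has no gap} only rules out long zero runs among $e_1,\ldots,e_{qp-1}$.

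Concretely, for $p=3$ and $q=2$, the monomial $e_2e_4$ is both the antidiagonal term and the grevlex leading term of the minor $\det\begin{pmatrix}1&0&0\\ e_3&e_1&e_2\\ e_6&e_4&e_5\end{pmatrix}=e_1e_5-e_2e_4$ of $A$. Under your enlarged intersection, $e_2e_4\notin\langle e_5,e_6\rangle$, so part (ii) of that proposition would fail. Moreover, $L$ could no longer equal $\In(I)$, which the Hilbert series argument of Section 5 needs.

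The correct fix is to the definition. A large gap should mean $p-1$ consecutive zeros among $a_1,\ldots,a_{qp-1}$, so the exponent of $e_{qp}$ is ignored. With that reading, your argument goes through verbatim and is exactly the paper's.
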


\begin{proof}
    Suppose that $m$ has a large gap. Then there is some $j$ such that $a_k = 0$ for all $j \leq k \leq j +p-2$, meaning that $m\notin \langle e_j, e_{j+1},\ldots, e_{j+p-2}\rangle$. Conversely, suppose that $m$ does not have a large gap. Then given any consecutive portion $(a_j, a_{j+1}, \ldots, a_{j+p-2})$ of its exponent vector, there must be some positive entry $a_k>0$ with $j \leq k \leq j+p-2$. Hence, $e_k$ divides $m$, and $m\in \langle e_j, e_{j+1},\ldots,e_{j+p-2}\rangle$. This is true for all $j = 1,\ldots,(q-1)p+1$, so $m\in L$.
\end{proof}

To conveniently describe initial terms of minors of the matrix $A$, we rearrange the columns of $A$ to a matrix $A'$ so that in each row, the variables $e_i$ appear in increasing order of their indices $i$. This forces the 0 entries to be left-justified on the top of the matrix and right-justified on the bottom. Specifically, we define the matrix $A'$ to be the $(2q-1)\times ((q-1) + (p-1)q)$ matrix with entries
\begin{equation}\label{eq: A'}
    A'_{i,j} = pi + j - qp,
\end{equation}
 where we take $e_0 = 1$ and $e_k=0$ whenever $k < 0$ or $k > qp$. For example, when $p = 4$ and $q = 3$, one has
$$
A' = \begin{pmatrix}
    0 & 0 & 0 & 0 & 0 & 0 & 0 & 1 & e_1 & e_2 & e_3\\
    0 & 0 & 0 & 1 & e_1 & e_2 & e_3 & e_4 & e_5 & e_6 & e_7 \\
    e_1 & e_2 & e_3 & e_4 & e_5 & e_6 & e_7 & e_8 & e_9 & e_{10} & e_{11} \\
    e_5 & e_6 & e_7 & e_8 & e_9 & e_{10} & e_{11} & e_{12} & 0 & 0 & 0 \\
    e_9 & e_{10} & e_{11} & e_{12} & 0 & 0 & 0 & 0 & 0 & 0 & 0
\end{pmatrix}
$$

\begin{prop}\label{prop: antidiagonal term has no gap}
    Let $S = \kk[e_1,\ldots,e_{qp}]$ and consider the matrix $A'$ as above. Fix a $(2q-1)\times(2q-1)$ submatrix $B$ of $A$. Then,
    \begin{enumerate}
        \item[(i)]if the product of the antidiagonal entries of $B$ is 0, then $\det(B) = 0$, and
        \item[(ii)] the product of the antidiagonal entries of $B$ lies in $L$, where $L$ is as defined in (\ref{eq: def of initial ideal}).
    \end{enumerate}
\end{prop}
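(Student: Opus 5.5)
The plan is to work directly with the index pattern of $A'$. Since $A'$ is obtained from $A$ by permuting columns, a $(2q-1)\times(2q-1)$ submatrix $B$ of $A$ corresponds, up to a column permutation and hence up to sign of its determinant, to a submatrix of $A'$. So I will take $B$ to consist of columns $c_1<c_2<\cdots<c_m$ of $A'$, where $m=2q-1$ and $1\le c_l\le pq-1$. The entry of $B$ in row $k$ and position $l$ is $e_{pk+c_l-qp}$. The antidiagonal entries are $e_{s_k}$ with
$$s_k := pk + c_{m+1-k} - qp, \qquad k=1,\ldots,m.$$
Two elementary observations drive everything. First, $s_{k+1}-s_k = p-(c_{m+1-k}-c_{m-k}) \le p-1$, because the columns are strictly increasing. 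Second, $s_1 = p + c_m - qp \le p-1$ and $s_m = pm + c_1 - qp \ge qp-p+1$.

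For (i), suppose some antidiagonal entry vanishes, so either $s_k<0$ or $s_k>qp$. If $s_k<0$, then for every row $i\le k$ and every column position $l\le m+1-k$ the index $pi+c_l-qp$ is at most $s_k<0$, since indices increase down columns and to the right. This gives a $k\times(m+1-k)$ block of zeros in $B$. If $s_k>qp$, the symmetric argument gives a zero block in rows $\ge k$ and positions $\ge m+1-k$, of the same size. In either case the block dimensions sum to $m+1>m$, so by the Frobenius--K\"onig criterion every term of the Leibniz expansion contains a zero, and $\det B=0$.

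For (ii), if the antidiagonal product is $0$ it lies in $L$ trivially, so assume every $s_k\in[0,qp]$. The product is then the monomial $\prod_k e_{s_k}$, with $e_0=1$ dropped. By Lemma \ref{lem: gap monomials}, it suffices to show that for each $1\le j\le (q-1)p+1$ some $s_k$ lies in the window $[j,j+p-2]$; any such $s_k$ is $\ge 1$ and so is a genuine variable. This is a discrete intermediate-value argument.
\begin{itemize}
\item If $s_1\ge j$, then $s_1$ itself lies in the window, because $s_1\le p-1\le j+p-2$.
\item Otherwise, let $k$ be the largest index with $s_k<j$. Since $s_m\ge qp-p+1\ge j$, we have $k<m$. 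Then $j\le s_{k+1}\le s_k+p-1\le j+p-2$.
\end{itemize}
Note that the sequence $s_k$ need not be monotone, but the argument uses only the bounded upward steps and the endpoint bounds.

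The main point of care is (i), specifically checking that the zero block really has the claimed shape. This uses the monotonicity of indices in both directions in $A'$, together with the convention that $e_t=0$ for $t<0$ or $t>qp$. The remaining steps are bookkeeping with the bounds $1\le c_l\le pq-1$.
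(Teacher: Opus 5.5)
Your proposal is correct and follows essentially the same route as the paper. In (i), a vanishing antidiagonal entry forces a zero block whose row and column counts sum to $2q$. In (ii), the antidiagonal indices start at most $p-1$, end at least $(q-1)p+1$, and rise by at most $p-1$ from one row to the next, so every window $[j,j+p-2]$ is hit. Your write-up is also a bit more careful than the paper's: it treats the lower-right zero block (the case $s_k>qp$) as well as the upper-left one, and it splits (ii) on whether the antidiagonal product vanishes rather than on whether $\det B$ does.
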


\begin{proof}
    For (i), suppose that $B$ has a 0 entry in position $(i, 2q-i)$ on its antidiagonal. Then by construction of $A'$, $B_{k,\ell} = 0$ for every $(k,\ell)$ such that $k\leq i$ and $\ell \leq 2q-i$. Hence, $B$ has the form
    $$
    B = \begin{pmatrix}
        \multicolumn{3}{ c }{0} & \Bigg\vert & 
        \multicolumn{3}{c}{C} \\
        \hline
        0 & \cdots & 0 & \big\vert & * & \cdots & * \\ \hline
        \multicolumn{7}{c}{D}
    \end{pmatrix}
    $$
    where $C$ is a square $(i-1)\times(i-1)$ matrix. Then, expanding along the first row, for example, one sees that $\det(B) = \det(C)\cdot 0 = 0$. 
    
    For (ii), suppose that $\det(B)\neq0$. By (i), all of the antidiagonal entries of $B$ are nonzero. By construction of $A'$, if $B_{i,j} = e_k$ and $B_{i+1,j-1} = e_\ell$, then either $\ell \leq k$ or $0 < \ell - k \leq p-1$. In other words, when comparing antidiagonal entries of $B$ in consecutive rows, the index of the entry in the later row can increase by at most $p-1$. Since the antidiagonal entry from the first row must lie in $\{1 = e_0, e_1, \ldots, e_{p-1}\}$, and the antidiagonal entry from the last row must lie in $\{e_{(q-1)p + 1}, e_{(q-1)p + 2}, \ldots, e_{qp}\}$, there must be no gaps of size $>p-1$ between variables $e_1, \ldots, e_{qp-1}$. Note that if one of the antidiagonal entries is $e_{qp}$, then there must be another antidiagonal entry $e_k$, where $(q-1)p+1\leq k \leq (q-1)p + (p-1)$. Similarly if one of the antidiagonal entries is $1 = e_0$, then there must be another entry $e_\ell$ with $1 \leq \ell \leq p-1$. Hence, the product of the antidiagonal entries lies in $L$.
\end{proof}

Given a $(2q-1)\times (2q-1)$ submatrix $B$ of $A'$, the monomials in its determinant correspond to permutations $w = (w_1,\ldots,w_{2q-1})$ via $w \leftrightarrow m_w :=\prod_{i=1}^{2q-1} B_{i, w_i}$. Consider the action of the symmetric group on itself via position swaps. Denote by $t_{i,j}$ the swap of positions $i$ and $j$.

\begin{lemma}\label{lem: smaller inversion implies smaller in grevlex}
    Let $w\in S_{2q-1}$ be a permutation with $m_w\neq0$, and let $i < j$. If $t_{i,j}w$ has a smaller inversion number than $w$, then $m_{t_{i,j}w} \prec m_w$ in the grevlex order on $S = \kk[e_1,\ldots,e_{qp}]$.
\end{lemma}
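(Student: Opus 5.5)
The plan is to compare the two monomials index by index, using the fact that the index of the variable in an entry of $A'$ is additive in a row contribution and a column contribution. By (\ref{eq: A'}), the entry of $A'$ in row $r$ and column $c$ is $e_{pr+c-qp}$, with the conventions $e_0=1$ and $e_k=0$ for $k<0$ or $k>qp$. Let $B$ use the columns $c_1<c_2<\cdots<c_{2q-1}$ of $A'$. Then
$$B_{r,s}=e_{pr+c_s-qp},$$
so the index in $B_{r,s}$ is a row term $pr$ plus a column term $c_s$ (up to the constant $-qp$), and it is strictly increasing in both $r$ and $s$.

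First I identify when the swap reduces inversions. For $i<j$, the permutation $t_{i,j}w$ agrees with $w$ except that the values $w_i$ and $w_j$ are exchanged. Swapping the values in positions $i<j$ decreases the inversion number exactly when $w_i>w_j$, so I assume this. Set $a=c_{w_j}$ and $b=c_{w_i}$; then $a<b$ because the $c_s$ are increasing. The monomials $m_w$ and $m_{t_{i,j}w}$ share all factors from rows other than $i$ and $j$. In rows $i,j$ the old factors are $e_{pi+b-qp}$ and $e_{pj+a-qp}$, and the new ones are $e_{pi+a-qp}$ and $e_{pj+b-qp}$.

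Next I observe that the multiset of indices changes by a pure spreading move. The index sum is preserved, since $(pi+a)+(pj+b)=(pi+b)+(pj+a)$. The new pair strictly brackets the old pair:
$$pi+a<\min(pi+b,\,pj+a)\le\max(pi+b,\,pj+a)<pj+b.$$
Hence the two monomials have the same degree, both for the standard grading and for the weighted grading $\deg e_k=k$. It may happen that $pi+b=pj+a$; this causes no problem.

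I then apply grevlex with $e_1>e_2>\cdots>e_{qp}$. For equal-degree monomials, the smaller one is the one whose exponent vector exceeds the other's at the largest index where they differ. In the difference $m_{t_{i,j}w}-m_w$ of exponent vectors, the largest index with a nonzero entry is $pj+b-qp$. There the entry is $+1$, since the old indices are strictly smaller. Therefore $m_{t_{i,j}w}\prec m_w$.

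The step needing care is the boundary convention. If $pi+a-qp\le 0$, the corresponding factor is $1$ (or the entry is $0$), and the largest-index comparison is unaffected. If $pj+b-qp>qp$, then $m_{t_{i,j}w}=0$ and the statement holds trivially, with $0$ regarded as smaller than every monomial. The hypothesis $m_w\neq 0$ guarantees that the old indices lie in the valid range, so the comparison above is legitimate whenever the new monomial is nonzero.
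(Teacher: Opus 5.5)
Your proof is correct and is essentially the paper's argument: the paper likewise writes the swapped entries as $e_{\ell-mp}$ and $e_{k+mp}$ (here $m=j-i$) and concludes from $k+mp>\ell$ that the new pair is grevlex-smaller, while your row-plus-column index bookkeeping makes the bracketing of the old pair by the new pair explicit. One small correction: when $pi+a-qp=0$ the new factor is $e_0=1$, so the new monomial has standard degree one less than $m_w$, contrary to your equal-degree claim. The conclusion $m_{t_{i,j}w}\prec m_w$ still holds there, because grevlex ranks lower-degree monomials lower.
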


\begin{proof}
    Choose $i,j$ such that $i< j$ and $w_i > w_j$. Then $t_{i,j}w$ has a smaller inversion number than $w$. If $m_{t_{i,j}w} = 0$ then we are done, so assume this is not the case. If $B_{i, w_i} = e_k$ and $B_{j,w_j} = e_\ell$, then there is some $m$ for which $B_{i, w_j} = e_{\ell - mp}$ and $B_{j, w_i} = e_{k + mp}$. Diagrammatically, $B$ has the following form
    $$
    \begin{pmatrix}
        \, & e_{\ell-mp} & \, & e_k & \,\\
        \, & \, & \, & \, &\, \\
        \, & e_\ell & \, & e_{k+mp}&\,\\
    \end{pmatrix}.
    $$
    Hence, there is a monomial $m'$ such that \begin{align*}
        m_w &= m' \cdot e_ke_\ell\\
        m_{t_{i,j}w} &= m' \cdot e_{\ell - mp}e_{k + mp}.
    \end{align*}
    It suffices to compare $e_ke_\ell$ and $e_{\ell-mp}e_{k+mp}$. But since $e_{k+mp}$ is in the same row and to the right of $e_\ell$ in $M$, we have $k+mp>\ell$, therefore in the grevlex order, $e_{\ell-mp}e_{k+mp} \prec e_ke_\ell$, as desired.
\end{proof}

\begin{prop}\label{prop: lead term is antidiagonal}
    The leading term of any maximal minor of $A'$ under grevlex order is the antidiagonal term.
\end{prop}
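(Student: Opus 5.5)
Fix a $(2q-1)\times(2q-1)$ submatrix $B$ of $A'$. Index the monomials of $\det(B)$ by permutations $w\in S_{2q-1}$ via $m_w=\prod_i B_{i,w_i}$. The antidiagonal term corresponds to the longest permutation $w_0=(2q-1,2q-2,\ldots,1)$. This is the unique permutation of maximal inversion number $\binom{2q-1}{2}$.

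If $m_{w_0}=0$, then $\det(B)=0$ by Proposition~\ref{prop: antidiagonal term has no gap}(i), and the statement is vacuous (the minor is zero). So I assume $m_{w_0}\neq 0$. The claim is then that every $w\neq w_0$ with $m_w\neq0$ satisfies $m_w\prec m_{w_0}$. Consequently $m_{w_0}$ appears in $\det(B)$ with coefficient $\pm1$ and is not cancelled, since any other permutation producing the same monomial would have to be strictly smaller.

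The plan is to connect $w$ to $w_0$ by a chain of transpositions, each strictly increasing the inversion number, in such a way that every intermediate monomial is nonzero. Lemma~\ref{lem: smaller inversion implies smaller in grevlex} then gives a chain of strict grevlex inequalities ending at $m_{w_0}$.

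For the chain, start from $w\neq w_0$ and pick adjacent positions $i<i+1$ with $w_i<w_{i+1}$. Set $w'=t_{i,i+1}w$, which has exactly one more inversion. Applying the lemma to $w'$ (whose inversion number drops when we swap back) yields $m_w\prec m_{w'}$, provided $m_{w'}\neq0$.

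The main obstacle is ensuring that the intermediate monomials are nonzero. Adjacent swaps may pass through a zero entry, and the lemma requires $m_{w'}\neq0$. I would handle this using the staircase shape of $A'$: its zeros occupy a top-left region and a bottom-right region that are monotone (Young-diagram shaped). Suppose $m_w\neq 0$ and $m_{w_0}\neq0$ (the latter meaning all antidiagonal entries are nonzero), and that $w_i<w_{i+1}$. I would like to choose the swap so that $B_{i,w_{i+1}}$ and $B_{i+1,w_i}$ are nonzero. An entry $B_{k,\ell}$ is zero exactly when it lies strictly above-left of the antidiagonal zero boundary or below-right of it. The antidiagonal being nonzero means $B_{k,\ell}\neq0$ whenever $k+\ell=2q$. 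Monotonicity then shows that $B_{k,\ell}$ is nonzero in a band containing the antidiagonal.

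The swap moves the entries $(i,w_i),(i+1,w_{i+1})$ to $(i,w_{i+1}),(i+1,w_i)$. Both new positions lie weakly "between" the old positions and the antidiagonal direction: the new row-$i$ column is larger and the new row-$(i+1)$ column is smaller. So if zeros were created, they would sit in a monotone zero region that already contained one of the old entries, a contradiction. I expect the careful verification of this convexity statement to be the delicate point. If adjacent swaps fail, I would instead choose, among all pairs $i<j$ with $w_i<w_j$, one for which the resulting permutation stays inside the nonzero band. Alternatively, I would argue directly by a grevlex comparison of the smallest variable index appearing in $m_w$ versus $m_{w_0}$.

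Once the chain exists, each step is a strict grevlex increase and the chain terminates at $w_0$. Therefore the leading term of $\det(B)$ is $\pm m_{w_0}$, the antidiagonal term.
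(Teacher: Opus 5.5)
Your proof is correct and follows essentially the paper's route: a chain of inversion-increasing transpositions with nonzero intermediate monomials, fed into Lemma~\ref{lem: smaller inversion implies smaller in grevlex}. Nonvanishing comes from the same rectangle fact the paper uses: the index $pi+j-qp$ of $A'_{i,j}$ increases in both $i$ and $j$, so nonzero corners $(i,w_i)$ and $(i+1,w_{i+1})$ force the whole rectangle between them to be nonzero. The only difference is that you swap at an arbitrary ascent, while the paper swaps at the first position where $w$ differs from $w_0$; your convexity step goes through directly, so the hedged fallbacks and the assumption $m_{w_0}\neq 0$ are unnecessary.
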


\begin{proof}
    Fix a $(2q-1)\times (2q-1)$ submatrix $B$ of $A'$. Then the antidiagonal term of $\det B$ corresponds to the long permutation $w_0 = (2q-1, 2q-2, \ldots, 1)$. We show that given any other nonzero term $m_w$ of $\det B$, we can reach $m_{w_0}$ by applying swaps $t_{i,j}$ such that 
    \begin{enumerate}
        \item[(1)] each $t_{i,j}$ increases the inversion number
        \item[(2)] each $m_{t_{i,j}w}$ is nonzero.
    \end{enumerate}
    It would then follow from Lemma \ref{lem: smaller inversion implies smaller in grevlex} that $m_{w_0}$ is the leading term of $\det{B}$.
    Fix a permutation $w$, and let $a$ be the the smallest index at which $w$ and $w_0$ differ. Precisely,
    $$a = \min\{i : w(i) \neq n-i+1\}.$$ Then, $w(b) = n-a+1$ for some index $b > a$, and it must be that $w(a) < n-a+1$. Thus, we have $B_{a, w(a)}\neq 0$ and $B_{b, w(b)}\neq 0$ with $a < b$ and $w(a) < w(b)$. By construction of $A'$, all entries $B_{i,j}$ with $a \leq i \leq b$ and $w(a) \leq j \leq w(b)$ are nonzero. Thus, the entries $B_{a, n-a+1}$ and $B_{b, w(a)}$ are both nonzero. The monomials $m_w$ and $m_{t_{a,b}w}$ only differ in these two factors, hence $t_{a,b}$ satisfies the conditions (1) and (2) described above. Now, $t_{a,b}w(i) = w_0(i)$ for all $1\leq i \leq a$, and we may proceed as before.
\end{proof}

\section{Proof of Conjecture for $q = 2$}\label{sec: d = 2 proof}

In this section, we will prove Conjecture \ref{conj: determinantal ideal is elimination ideal} for $q = 2$. Note that when $q = 0$, the statement is trivial, and when $q = 1$, the statement is proven in \cite[Thm 1.1]{transfer-paper}. Fixing $q = 2$, the polynomials $f_0,\ldots,f_{p-1}$ are given by
\begin{align*}
    f_0 &= t^2 + e_pt + e_{2p}\\
    f_1 &= e_1t + e_{p+1} \\
    f_2 &= e_2t + e_{p+2} \\
    &\vdots \\
    f_{p-1} &= e_{p-1}t + e_{p + (p-1)}
\end{align*}
and the matrix $A$ is given by
$$
A = \begin{pmatrix}
       1 & e_1 & 0 & e_2 & 0 & \cdots & e_{p-1} & 0 \\
       e_p & e_{p+1} & e_1 & e_{p+2} & e_2 & \cdots & e_{p + (p-1)} & e_{p-1} \\
       e_{2p} & 0 & e_{p+1} & 0 & e_{p+2} & \cdots & 0 & e_{p+ (p-1)}
   \end{pmatrix}.
$$

\begin{notation}\label{notation for all ideals}
    Fix $p\geq 3$ and let $S = \kk[e_1,\ldots, e_{2p}]$. Let $f_0, f_1, \ldots, f_{p-1}$ and $A$ be as above. Consider the following ideals $I,J,L$ of $S$:
    \begin{align*}
        I &:= \langle f_0, f_1, \ldots, f_{p-1} \rangle \cap S\\
        J &:= \mathcal{I}_{3}(A) \quad (\text{the ideal generated by the maximal minors of }A) \\
        L &:= \langle e_1,\ldots, e_{p-1} \rangle \cap
        \langle e_2, \ldots, e_p \rangle \cap \cdots \cap
        \langle e_{p+1}, \ldots, e_{2p-1}\rangle\\
        &= \bigcap_{i=1}^{p+1} \langle e_i, e_{i+1}, \ldots, e_{i+p-2}\rangle.
    \end{align*}
\end{notation}

By Proposition \ref{prop: J contained in I}, we have that $J\subseteq I$, hence there is also a containment of initial ideals $\In(J)\subseteq \In(I)$ for any monomial order on $S$. By Proposition \ref{prop: lead term is antidiagonal}, $L\subseteq \In(J)$ under the grevlex monomial order. In this section, we will show that under a certain positive $\ZZ^p$-multigrading on $S$, there is an equality of multigraded Hibert series $\Hilb(L, \xx) = \Hilb(I, \xx)$. From this it would follow that $\In(I) = L$ and that $I = J$.  

To do so, consider the $\kk$-algebra homomorphism $\varphi: S \to \kk[u_1, \ldots, u_p,\alpha]$ defined by
$$
    \varphi(e_i) = 
    \begin{cases}
        u_i &\text{if }1 \leq i \leq p-1 \\
        u_p + \alpha & \text{if } i = p \\
        u_{i-p}\alpha & \text{if }p+1 \leq i \leq 2p
    \end{cases}
$$
\begin{prop}\label{prop: elim ideal as kernel}
    With $\varphi$ defined as above, $I = \ker(\varphi)$.
\end{prop}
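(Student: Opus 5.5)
The plan is to realize $\varphi$ as the restriction to $S$ of a ring map out of $S[t]$ whose kernel is exactly $J_t:=\langle f_0,\ldots,f_{p-1}\rangle\subset S[t]$. Then
$$I = J_t\cap S = \ker(\Phi)\cap S = \ker(\varphi).$$
Define $\Phi: S[t]\to \kk[u_1,\ldots,u_p,\alpha]$ by $\Phi|_S=\varphi$ and $\Phi(t) = -\alpha$.

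First, I check that $J_t\subseteq\ker\Phi$. This is a direct substitution:
$$\Phi(f_0) = \alpha^2 - (u_p+\alpha)\alpha + u_p\alpha = 0,\qquad \Phi(f_i) = -u_i\alpha + u_i\alpha = 0 \quad (1\le i\le p-1).$$
Conceptually, $\varphi$ parametrizes the coefficient tuples for which $f_0=(t+\alpha)(t+u_p)$ and $f_i = u_i(t+\alpha)$, so that all the $f_i$ share the root $-\alpha$. This step already gives $I\subseteq \ker\varphi$: if $g\in I$, write $g=\sum c_if_i$ in $S[t]$ and apply $\Phi$ to get $\varphi(g)=\Phi(g)=0$.

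For the reverse inclusion, the key observation is that each generator of $J_t$ is monic in its own variable $e_{p+i}$:
$$f_i = e_{p+i} + e_i t \ (1\le i\le p-1), \qquad f_0 = e_{2p} + e_p t + t^2.$$
Hence in $S[t]/J_t$ we may eliminate $e_{p+1},\ldots,e_{2p}$, and the composite
$$\kk[e_1,\ldots,e_p,t]\to S[t]\to S[t]/J_t$$
is an isomorphism. Its inverse sends $e_{p+i}\mapsto -e_it$ and $e_{2p}\mapsto -t^2-e_pt$. In particular $S[t]/J_t$ is a polynomial ring in $p+1$ variables.

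Since $J_t\subseteq\ker\Phi$, the map $\Phi$ factors through $S[t]/J_t\cong\kk[e_1,\ldots,e_p,t]$. The induced map is
$$e_i\mapsto u_i \ (i<p),\qquad e_p\mapsto u_p+\alpha,\qquad t\mapsto -\alpha.$$
This is an isomorphism onto $\kk[u_1,\ldots,u_p,\alpha]$, with inverse $u_i\mapsto e_i$, $\alpha\mapsto -t$, $u_p\mapsto e_p+t$. Therefore $\ker\Phi = J_t$, and intersecting with $S$ gives $\ker\varphi = J_t\cap S = I$.

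I do not expect a serious obstacle. The only step needing care is the isomorphism $S[t]/J_t\cong\kk[e_1,\ldots,e_p,t]$. It should be justified by noting that $e_{p+1},\ldots,e_{2p}$ can be replaced by the new variables $f_1,\ldots,f_{p-1},f_0$: the substitution $e_{p+i}\mapsto f_i$ (and $e_{2p}\mapsto f_0$) is triangular with unit coefficients, so it is a $\kk[e_1,\ldots,e_p,t]$-algebra automorphism of $S[t]$. Under this change of variables $J_t$ becomes the ideal generated by $p$ of the polynomial generators. One might initially worry that $J_t$ has an embedded or extra component along $e_i=e_{p+i}=0$, but this argument shows $J_t$ is in fact prime, which removes the concern.
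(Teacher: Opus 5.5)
Your proof is correct, and it takes a genuinely different route from the paper's.

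The paper's argument is set-theoretic. The $f_i$ share a root exactly when $f_0=(t+\alpha)(t+u_p)$ and $f_i=(t+\alpha)u_i$. So the zero locus of $I$ is the image of the map $(u,\alpha)\mapsto(u_1,\ldots,u_{p-1},u_p+\alpha,u_1\alpha,\ldots,u_p\alpha)$, and $I$ is then identified with the ideal of that image. As written, this compares zero sets (implicitly over an algebraically closed field), so by itself it only gives $\sqrt{I}=\ker\varphi$. Equality requires knowing that $I$ is radical, which the paper does not establish for general $\kk$ and $p$.

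Your argument avoids this issue:
\begin{itemize}
\item You extend $\varphi$ to $\Phi$ on $S[t]$ with $t\mapsto-\alpha$.
\item You note that after the change of variables $e_{p+i}\mapsto f_i$, $e_{2p}\mapsto f_0$, the generators become variables. This gives $S[t]/\langle f_0,\ldots,f_{p-1}\rangle\cong\kk[e_1,\ldots,e_p,t]$.
\item On this ring $\Phi$ is a linear change of coordinates, so $\ker\Phi=\langle f_0,\ldots,f_{p-1}\rangle$.
\item Restricting to $S$ gives $\ker\varphi=I$ exactly, over any field, and shows that $I$ is prime.
\end{itemize}

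What each approach buys: the paper's version explains where the parametrization $\varphi$ comes from, namely the common linear factor $t+\alpha$. Yours is rigorous and generalizes immediately. For any $q$, $f_0$ has constant term $e_{qp}$ and each $f_i$ has constant term $e_{(q-1)p+i}$, and these $p$ variables occur nowhere else in the generators. Hence $I$ is always the prime kernel of an explicit map from $S$ to a polynomial ring in $(q-1)p+1$ variables.
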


\begin{proof}
    Recall that $I$ defines the set of coefficients for which $f_0,f_1,\ldots,f_{p-1}$ have a common root, which occurs if and only if there exist $\alpha,u_1,\ldots,u_p\in \kk$ such that the $f_i$ factor as
    \begin{align*}
        f_0 &= (t+\alpha)(t+u_p)\\
        f_1 &= (t+\alpha)u_1\\
        &\vdots\\
        f_{p-1} &= (t+\alpha)u_{p-1}.
    \end{align*}
    Hence, the $\{f_i\}$ having a common root is equivalent to their coefficients $(e_1,\ldots,e_{2p})$ being in the image of the map
    \begin{align*}
        \mathbb{A}^{p+1} &\to \mathbb{A}^{2p}\\
        (u_1,\ldots,u_p, \alpha) &\mapsto (u_1,\ldots,u_{p-1},u_p + \alpha, u_1\alpha,\ldots,u_p\alpha).
    \end{align*}
    Hence the defining ideal of the image must be $I= \ker\varphi$.
\end{proof}

Let $A = \mathrm{Im}(\varphi)$. We will show the equality of Hilbert series $\Hilb(S/L,\xx) = \Hilb(A,\xx)$ for some $\ZZ^p$-multigrading, which we describe now. Let $\epsilon_i$ denote the standard basis vector of $\ZZ^p$ which has 1 in position $i$ and 0 everywhere else. On $S$, we make a $\ZZ^p$-multigrading by setting
$$
\deg(e_i) = \begin{cases}
    \epsilon_i &\text{if }1 \leq i \leq p\\
    \epsilon_{i-p} + \epsilon_p &\text{if } p+1 \leq i \leq 2p.
\end{cases}
$$
On $\kk[u_1,\ldots,u_p,\alpha]$ we set $\deg(u_i) = \epsilon_i$ and $\deg(\alpha) = \epsilon_p$. Under this multigrading, the map $\varphi$ is homogeneous. Now, fix any monomial ordering $\prec$ on $A$ in which $\alpha \prec u_p$.

\begin{lemma}\label{lemma: elements in initial algebra}
    For each $i = 1, \ldots, p-1$, the monomials $$u_i\alpha^j, \quad j \geq 1$$ appear in $A$ (and hence in the initial algebra $\In(A)$).
\end{lemma}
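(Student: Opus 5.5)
The plan is to exhibit, for each $i\in\{1,\ldots,p-1\}$ and each $j\geq1$, an explicit polynomial in $e_1,\ldots,e_{2p}$ whose image under $\varphi$ is exactly $u_i\alpha^j$. Since $A=\Im(\varphi)$ is a subalgebra, it is enough to produce the single monomials $u_i\alpha$ and $\alpha$, or more carefully, since $\alpha$ itself need not lie in $A$, to build up the powers by induction on $j$. Once $u_i\alpha^j\in A$, it is its own leading term, so it lies in $\In(A)$ automatically.

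\textbf{Base case $j=1$.} From the definition, $\varphi(e_{p+i})=u_i\alpha$ for $1\le i\le p-1$, so $u_i\alpha\in A$.

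\textbf{Inductive step.} I will look for a recursion that raises the power of $\alpha$ by one while staying inside $A$. The natural elements to use are $\varphi(e_p)=u_p+\alpha$ and $\varphi(e_{2p})=u_p\alpha$. These satisfy
$$\alpha^2=(u_p+\alpha)\alpha-u_p\alpha,$$
so for $j\ge1$,
$$u_i\alpha^{j+1}=u_i\alpha^{j}\cdot(u_p+\alpha)-u_i\alpha^{j-1}\cdot u_p\alpha .$$
Hence if $u_i\alpha^{j}$ and $u_i\alpha^{j-1}$ both lie in $A$, then
$$u_i\alpha^{j+1}=u_i\alpha^j\,\varphi(e_p)-u_i\alpha^{j-1}\,\varphi(e_{2p})\in A.$$
This gives a two-step induction. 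It needs the starting values $u_i\alpha^0=u_i=\varphi(e_i)$ and $u_i\alpha=\varphi(e_{p+i})$, both of which are in $A$. Concretely, the preimages are given by the recursion $P_{j+1}=e_pP_j-e_{2p}P_{j-1}$ with $P_0=e_i$ and $P_1=e_{p+i}$.

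\textbf{Main obstacle.} The step needing the most care is only making sure the recursion stays within $A$: it uses only products of images of generators, so no division by $u_i$ is required. The statement that these monomials lie in the initial algebra is then immediate, because each element is a monomial and its initial term is itself. One also checks homogeneity: $u_i\alpha^j$ has multidegree $\epsilon_i+j\epsilon_p$, which is consistent with the recursion, although this is not needed for the statement.
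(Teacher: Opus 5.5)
Your proof is correct and follows the paper's argument: the paper writes $u_i\alpha^2=(u_p+\alpha)\cdot u_i\alpha-u_i\cdot u_p\alpha$ and says the remaining cases follow by induction. That induction is exactly your two-step recursion $u_i\alpha^{j+1}=u_i\alpha^j\,\varphi(e_p)-u_i\alpha^{j-1}\,\varphi(e_{2p})$, with base cases $\varphi(e_i)=u_i$ and $\varphi(e_{p+i})=u_i\alpha$, so you have just made the paper's induction (and the explicit preimages) precise.
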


\begin{proof}
    Fix $1 \leq i \leq p-1$. Then 
    $$u_i\alpha^2 = (u_p + \alpha)\cdot u_i\alpha - u_i\cdot u_p\alpha\quad  \in A,$$
    and by induction one can show that $u_i\alpha^j \in A$ for all $j\geq 1$.
\end{proof}

\begin{cor}\label{cor: dim of graded pieces}
    Let $\dd = (d_1,\ldots,d_p)\in\ZZ_{\geq0}^p$. Then
    $$
    \dim_\kk(\In(A))_{\dd} =
    \begin{cases}
        d_p + 1 & \text{if }d_i > 0\text{ for some }1\leq i \leq p-1\\
        1 + \left\lfloor\dfrac{d_p}{2}\right\rfloor & \text{if }d_i=0\text{ for all }1\leq i \leq p-1
    \end{cases}
    $$
\end{cor}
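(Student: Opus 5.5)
The plan is to compute $\dim_\kk A_{\dd}$ directly and transfer it to $\In(A)$. Since $\varphi$ is multigraded, $A$ is a multigraded subalgebra of $\kk[u_1,\ldots,u_p,\alpha]$ and every graded piece is finite dimensional. The leading term of a homogeneous element is again homogeneous of the same multidegree. Hence for each $\dd$, the space $(\In(A))_{\dd}$ is spanned by the leading monomials of elements of $A_{\dd}$. Linear algebra (Gaussian elimination with respect to $\prec$ inside the finite-dimensional space $A_{\dd}$) gives $\dim_\kk(\In(A))_{\dd}=\dim_\kk A_{\dd}$. So it suffices to prove either dimension formula, and I will use whichever is more convenient in each case.

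First, the ambient piece. A monomial of $\kk[u_1,\ldots,u_p,\alpha]$ of multidegree $\dd$ has the form $u_1^{d_1}\cdots u_{p-1}^{d_{p-1}}u_p^{a}\alpha^{b}$ with $a+b=d_p$. There are exactly $d_p+1$ of them, so $\dim_\kk A_{\dd}\le d_p+1$ always.

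\emph{Case 1: $d_i>0$ for some $1\le i\le p-1$.} I will show that all $d_p+1$ monomials lie in $\In(A)$, which gives equality. The algebra $\In(A)$ is closed under products, since $\mathrm{lt}(fg)=\mathrm{lt}(f)\mathrm{lt}(g)$. It contains the following elements:
\begin{itemize}
\item $u_j=\varphi(e_j)$ for $1\le j\le p-1$;
\item $u_p=\mathrm{lt}(u_p+\alpha)$, using $\alpha\prec u_p$;
\item $u_i\alpha^{j}$ for all $j\ge1$, by Lemma \ref{lemma: elements in initial algebra}.
\end{itemize}
Fix the index $i$ with $d_i>0$. For $b\ge1$, write the target monomial as
$$(u_i\alpha^{b})\cdot u_i^{d_i-1}\cdot\prod_{j\ne i,\,j<p}u_j^{d_j}\cdot u_p^{a}.$$
For $b=0$, drop the factor $\alpha^b$ and use $u_i^{d_i}$ instead. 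In both cases the monomial is a product of elements of $\In(A)$, so it lies in $\In(A)$. Hence $\dim_\kk(\In(A))_{\dd}=d_p+1$.

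\emph{Case 2: $d_1=\cdots=d_{p-1}=0$.} Every generator $e_j$ with $j\ne p,2p$ has multidegree with a positive coordinate among the first $p-1$. Therefore $S_{\dd}$ is spanned by the monomials $e_p^{a}e_{2p}^{c}$ with $a+2c=d_p$, using $\deg e_{2p}=2\epsilon_p$. There are $1+\lfloor d_p/2\rfloor$ such monomials. Their images are $(u_p+\alpha)^a(u_p\alpha)^c$. Since $u_p+\alpha$ and $u_p\alpha$ are the elementary symmetric polynomials in $u_p,\alpha$, they are algebraically independent. So these images are linearly independent, and $\dim_\kk A_{\dd}=1+\lfloor d_p/2\rfloor$. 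This transfers to $\In(A)$ by the first paragraph.

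The only step needing care is the equality of Hilbert functions of $A$ and $\In(A)$. That equality is standard once the multigrading is made to interact well with leading terms, as described in the first paragraph, so I do not expect a genuine obstacle.
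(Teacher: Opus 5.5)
Your proof is correct. Case 1 is the paper's argument, and you also make explicit that $u_p=\In(u_p+\alpha)$ lies in $\In(A)$, which the paper uses tacitly.

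Case 2 is where the routes differ. The paper stays inside $\In(A)$. It asserts that, since $\alpha^k\notin\In(A)$, every monomial of degree $(0,\ldots,0,d_p)$ in $\In(A)$ is a product of $u_p$ and $u_p\alpha$, and then counts these. That gives the lower bound at once. The upper bound, that no monomial such as $u_p\alpha^2$ occurs as a leading monomial, is only asserted.

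You instead work on the source side:
\begin{itemize}
\item $S_{\dd}$ is spanned by the $1+\lfloor d_p/2\rfloor$ monomials $e_p^ae_{2p}^c$;
\item their images $(u_p+\alpha)^a(u_p\alpha)^c$ are linearly independent, because $u_p+\alpha$ and $u_p\alpha$ are algebraically independent;
\item the equality $\dim_\kk(\In(A))_{\dd}=\dim_\kk A_{\dd}$ transfers the count to the initial algebra.
\end{itemize}
The paper needs that last equality anyway, when it later writes $\dim_\kk(S/I)_{\dd}=\dim_\kk(\In(A))_{\dd}$. Isolating it up front costs nothing and gives both bounds in Case 2 at once.

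What the paper's formulation offers is the explicit list of monomials in $\In(A)_{\dd}$, which the later factorization construction relies on. Your argument yields that list too. The leading monomials of $(u_p+\alpha)^a(u_p\alpha)^c$ are $u_p^{a+c}\alpha^c=u_p^{d_p-c}\alpha^c$, and these are pairwise distinct, so they are exactly the monomials of $\In(A)_{\dd}$.
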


\begin{proof}
    First, note that the given multigrading on $\kk[u_1,\ldots,u_p,\alpha]$ restricts to the fine grading on $u_1,\ldots,u_{p-1}$, in the sense that for each $(d_1,\ldots,d_{p-1})\in\ZZ_{\geq0}^{p-1}$, there is exactly one monomial in $u_1,\ldots,u_{p-1}$ of this multidegree.

    Now, fix a multidegree $\dd = (d_1,\ldots,d_p)\in\ZZ^p$ and suppose that at least one of $d_1,\ldots,d_{p-1}$ is nonzero. Without loss of generality, suppose $d_1 > 0$. Then we claim that all monomials of degree $\dd$ from $\kk[u_1,\ldots,u_p,\alpha]$ are present in $\In(A)$. Any such monomial is of the form $$u_1^{d_1}u_2^{d_2}\cdot \cdots \cdot u_{p-1}^{d_{p-1}} \cdot \alpha^e u_p^{d_p - e}.$$ Since $d_1\geq1$, by Lemma \ref{lemma: elements in initial algebra}, the given monomial may be factored in $\In(A)$ as
    $$(u_1\alpha^e)\cdot u_1^{d_1-1}\cdot u_2^{d_2}\cdots u_{p-1}^{d_{p-1}}\cdot u_p^{d_p-e}.$$
    The number of such monomials is equal to the number of (standard graded) monomials in $\{\alpha, u_p\}$ of degree $d_p$, which is $d_p+1$. This proves the first case.

    Now suppose that $d_i=0$ for all $i\neq p$. Since $\alpha^k\notin \In(A)$ for any $k\geq1$, any monomial of degree $(0,\ldots,0,d_p)$ lying in $\In(A)$ must be a monomial in $\{u_p, u_p\alpha\}$. Specifically, this set of monomials is
    $$\left\{ u_p^{d_p}, u_p^{d_p-2}(u_p\alpha), u_p^{d_p-4}(u_p\alpha)^2,\ldots,u_p^{d_p - 2\lfloor\frac{d_p}{2}\rfloor}(u_p\alpha)^{\lfloor\frac{d_p}{2}\rfloor}\right\},$$
    the cardinality of which is $1 + \lfloor\frac{d_p}{2}\rfloor$.
\end{proof}

Recall that for any homogeneous ideal $\mathfrak{a}$ of a positively graded polynomial ring $T$ over a field $\kk$, the quotient $T/\mathfrak{a}$ has a homogeneous $\kk$-basis of \textit{standard monomials} $\{m : m\notin \In(\mathfrak{a})\}$, where $\In(\mathfrak{a})$ may be with respect to any fixed monomial order on $T$. Since $L\subseteq \In(I)$, given any multidegree $\dd\in\ZZ^p$, we have
$$\dim_\kk(S/L)_{\dd} \geq \dim_\kk(S/\In(I)) =  \dim_\kk(S/I)_{\dd} = \dim_\kk(\In(A))_{\dd}.$$
Thus, if we can exhibit surjective linear maps 
$$\In(A)_{\dd}\to (S/L)_{\dd}$$
for all multidegrees $\dd$, it would follow that $\In(A)$ and $S/L$ have the same Hilbert series.

Now that we have characterized the monomials $m$ in $\In(A)_{\dd}$, we will create a map of vector spaces $\psi:\In(A)_{\dd}\to (S/L)_{\dd}$ by first constructing a factorization $x(m)$ of the form $m = x_1\cdot x_2 \cdots x_k$, where each $x_i\in \{u_1,\ldots, u_p, u_1\alpha,\ldots, u_p\alpha, \alpha\}$. The map $\psi$ will then be defined on the factors and extended multiplicatively to monomials.

\begin{construction}\label{const: factorization}
Fix a multidegree $\dd = (d_1,\ldots,d_p)\in\mathbb{N}^p$ and let $m$ be a monomial in $\In(A)_{\dd}$. We construct a factorization $x(m) = x_1\cdots x_k$ as follows.
\begin{enumerate}
    \item Set $i = p$, and let $\xx= ()$ be the empty sequence.
    \item While $u_i\alpha$ divides $m$, append $u_i\alpha$ to $\xx$ and replace $m$ by $\frac{m}{u_i\alpha}$.
    \item If $m = 1$, the algorithm finishes. Otherwise, replace $i$ by $i-1$. If $i = 0$, proceed to (4), otherwise return to (2).
    \item If $i = 0$, then $m = \alpha^\ell$ for some $\ell$ or $m = u_1^{a_1}\cdots u_p^{a_p}$ for some $a_1,\ldots,a_p$. Append $(\alpha,\ldots,\alpha)$ (with $\alpha$ repeated $\ell$ times) or $(u_1,\ldots,u_1,u_2,\ldots,u_p)$ (with $u_i$ repeated $a_i$ times) to $\xx$, respectively, and exit.
\end{enumerate}

Let $X= \{u_1,\ldots,u_p,u_1\alpha,\ldots,u_p\alpha,\alpha\}$ denote the set of possible factors $x_i$ in a factorization. Define a map of sets 
\begin{align*}
    \psi': X &\to S\\
    u_i &\mapsto e_i \\
    u_i\alpha &\mapsto e_{p+i} \\
    \alpha &\mapsto e_p
\end{align*}
We then define the map of vector spaces $\psi: \In(A)\to S$ by $\psi(m) = \prod_{i-1}^{k}\psi'(x_i)$ on monomials $m$ and extended $\kk$-linearly. Here, $m$ is factored as $m = x_1\cdots x_k$ according to Construction \ref{const: factorization}. Note that $\psi$ is a degree-preserving map of sets, so descends to maps $\psi:\In(A)_{\dd}\to S_{\dd}$ for all $\dd$.
\end{construction}

\begin{example}
    Let $p = 3$ and let $\dd = (1,1,3)$. We list all monomials in $\In(A)$ of degree $\dd$, followed by their factorizations and images under $\psi$.
    \begin{align*}
        u_1u_2u_3^3 &= u_3\cdot u_3\cdot u_3\cdot u_2\cdot u_1 \mapsto e_1e_2e_3^3\\
        u_1u_2u_3^2\alpha &= (u_3\alpha) \cdot u_3\cdot u_2\cdot u_1 \mapsto e_1e_2e_3e_6 \\
        u_1u_2u_3\alpha^2 &= (u_3\alpha)\cdot (u_2\alpha) \cdot u_1 \mapsto e_1e_5e_6 \\
        u_1u_2\alpha^3 &= (u_2\alpha)\cdot(u_1\alpha)\cdot \alpha \mapsto e_3e_4e_5
    \end{align*}
\end{example}

\begin{prop}
    The image of $\psi$, as defined in Construction \ref{const: factorization}, lies in the $\kk$-span of monomials \textit{not} in $L$, hence gives a map $\psi:\In(A)_{\dd} \to (S/L)_{\dd}$ for all multidegrees $\dd$.
\end{prop}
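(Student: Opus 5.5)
The plan is to prove the statement directly, monomial by monomial. Since $\psi$ sends each monomial $m\in\In(A)_{\dd}$ to a single monomial $\psi(m)\in S_{\dd}$, it suffices to show that $\psi(m)\notin L$. By Lemma \ref{lem: gap monomials}, this means showing that $\psi(m)$ has a large gap. Because $L$ is the intersection over the windows $[j,j+p-2]$ for $1\le j\le p+1$, I will find, for each $m$, some $j\in\{1,\ldots,p+1\}$ such that none of $e_j,\ldots,e_{j+p-2}$ divides $\psi(m)$.

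The key invariant comes from Construction \ref{const: factorization}. The loop runs over $i=p,p-1,\ldots$, and after step (2) for index $i$, the current $m$ is not divisible by $u_i\alpha$. So either $u_i$ has been exhausted or $\alpha$ has been exhausted. Moreover, once $\alpha$ is exhausted, no further factors $u_j\alpha$ are produced, and what remains is a monomial in the $u$'s. I will split into cases according to when (if ever) $\alpha$ is exhausted.

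\emph{Case 1: $\alpha$ is never exhausted during the loop.} Then at each index $i=p,\ldots,1$ the variable $u_i$ was exhausted. The factors are therefore all of the form $u_i\alpha$, plus the trailing $\alpha$'s from step (4). Hence $\psi(m)$ involves only $e_{p+1},\ldots,e_{2p}$ and $e_p$. So $e_1,\ldots,e_{p-1}$ are absent, and the window $j=1$ works. This case includes $m=\alpha^\ell$; by Corollary \ref{cor: dim of graded pieces} that monomial occurs only for $\ell=0$, but the argument does not need this.

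\emph{Case 2: no factor $u_i\alpha$ is extracted and $\alpha\nmid m$.} Then $m$ is a monomial in $u_1,\ldots,u_p$, so $\psi(m)$ involves only $e_1,\ldots,e_p$. The window $j=p+1$, namely $e_{p+1},\ldots,e_{2p-1}$, works.

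\emph{Case 3: $\alpha$ is exhausted while processing some index $i^*\ge 1$.} For every $j>i^*$, $\alpha$ survived step (2), so $u_j$ was exhausted. The extracted factors are $u_j\alpha$ with $j\ge i^*$, which map to $e_{p+j}$ with $p+j\ge p+i^*$. The leftover factors are $u_j$ with $j\le i^*$, which map to $e_j$ with $j\le i^*$. So every variable of $\psi(m)$ has index $\le i^*$ or $\ge p+i^*$, and the indices $i^*+1,\ldots,i^*+p-1$ are all absent. This is a gap of length exactly $p-1$, and $j=i^*+1\le p+1$ is an admissible window. In the subcase $i^*=p$ this is the window $e_{p+1},\ldots,e_{2p-1}$, consistent with $\psi(m)$ using only $e_1,\ldots,e_p,e_{2p}$.

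The main obstacle is purely bookkeeping. One must check that the loop invariant really forces every $u_j$ with $j>i^*$ to be exhausted, including $u_p$ (which would otherwise contribute $e_p$ and could land in the window). One must also handle the case where the algorithm terminates early with $m=1$; this is covered by whichever of the cases above applies at the moment of termination. Once the three cases are verified, $\psi(m)$ is a standard monomial for $L$ in every case. By $\kk$-linearity, $\psi$ then induces a map $\In(A)_{\dd}\to(S/L)_{\dd}$ for every multidegree $\dd$, since $\psi$ already preserves degree.
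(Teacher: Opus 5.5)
Your proof is correct and follows essentially the same route as the paper. The paper also cases on how Construction~\ref{const: factorization} terminates, and in the main case it uses the smallest index $i$ with $u_i\alpha$ in the factorization (your $i^*$) to exhibit the missing window $e_{i+1},\ldots,e_{p+i-1}$; in the other cases it uses the window $e_1,\ldots,e_{p-1}$.

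Your Case 2 (no factor $u_i\alpha$ at all, handled by the window $e_{p+1},\ldots,e_{2p-1}$) covers a situation the paper's case (iii) leaves implicit, since the paper's index $i$ is undefined there.
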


\begin{proof}
    Recall that a monomial $m$ of $S$ lies in $L$ if and only if it has no large gap (see Definition \ref{def: large gap}). We will show that any monomial in the image of $\psi$ has a large gap. Given a monomial $n\in \In(A)$, let $i$ be the smallest index in which $u_i\alpha$ appears in the factorization $x(n)$. Then one has the following possibilities, based on Construction \ref{const: factorization}:
    \begin{enumerate}
        \item[(i)] Suppose that one exited the algorithm in Construction \ref{const: factorization} at step (3). Then all factors in $x(n)$ are of the form $u_k\alpha$ for some $k$'s, and $\psi(n)\notin \langle e_1,\ldots,e_{p-1}\rangle \supset L$.
        \item[(ii)] If one exits the algorithm at step (4) with $n = \alpha^\ell$, then all factors in $x(n)$ are either some $u_k\alpha$ or $\alpha$. Then $\psi(n)$ also does not lie in $\langle e_1,\ldots,e_{p-1}\rangle\supset L$.
        \item[(iii)] If one exits the algorithm at step (4) with $n$ being a monomial in the $u_k$, then the largest $k$ for which the exponent on $u_k$ is nonzero has $k \leq i$. Thus, $\psi(n)$ has a gap between $e_{p+i}$ and $e_k$, with $k\leq i$. Consequently $\psi(n)$ does not lie in $\langle e_{i+1},\ldots, e_{p+i-1}\rangle \supset L$.
    \end{enumerate}
\end{proof}

We now define a right inverse for $\psi$. For an element $f\in A$ (with monomial order satisfying $u_p\succ \alpha$), define $\In(f)$ to be its initial monomial, and define $\In^\vee(f)$ to be the smallest monomial of $f$.

Let $\phi: (S/L)_{\dd} \to \In(A)_{\dd}$ be defined by
$$\phi(m) =
\begin{cases}
    \In(\varphi(m)) & \text{if }m\in \langle e_1,\ldots,e_{p-1} \rangle \\
    \In^\vee(\varphi(m))& \text{otherwise}
\end{cases}.
$$
Note that the two cases agree if $e_p$ does not divide $m$, as $\varphi(m)$ would be a monomial.

\begin{prop}\label{prop: right inverse}
    The map $\phi$ is a right inverse for $\psi$, i.e. $\psi\circ \phi = \mathrm{Id}_{(S/L)_{\dd}}$.
\end{prop}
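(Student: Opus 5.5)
The plan is to verify $\psi(\phi(m)) = m$ directly on every standard monomial $m\notin L$. This is enough because $\psi$ and $\phi$ are defined on monomials and the standard monomials form a basis of $(S/L)_{\dd}$. By Lemma \ref{lem: gap monomials}, such an $m = \prod e_i^{a_i}$ has a large gap: there is $1\le j\le p+1$ with $a_j = a_{j+1} = \cdots = a_{j+p-2} = 0$.

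\textbf{Step 1: compute $\phi(m)$.} Write $m = e_p^{c}\, m'$ with $e_p\nmid m'$. Then $\varphi(m') = \prod_{i<p} u_i^{a_i}\prod_{i=1}^{p}(u_i\alpha)^{a_{p+i}}$ is a monomial, and $\varphi(m) = \varphi(m')(u_p+\alpha)^c$. The two extreme terms $u_p^c$ and $\alpha^c$ of $(u_p+\alpha)^c$ have coefficient $1$ in every characteristic. Since $\alpha\prec u_p$ and monomial orders are multiplicative, we get $\In(\varphi(m)) = \varphi(m')u_p^c$ and $\In^\vee(\varphi(m)) = \varphi(m')\alpha^c$. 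So in the first case of $\phi$ the factor $e_p$ contributes $u_p$, and in the second case it contributes $\alpha$. This matches $\psi'(u_p)=e_p$ and $\psi'(\alpha) = e_p$.

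\textbf{Step 2: use the gap to show Construction \ref{const: factorization} recovers the intended factorization.} We must show the algorithm applied to $\phi(m)$ produces exactly $a_{p+i}$ copies of $u_i\alpha$ for each $i$, and then the leftover $u_i$'s, or the leftover $\alpha$'s, coming from $e_i$ with $i\le p$. The gap condition says the support of $m$ lies in $\{e_i : i<j\}\cup\{e_i: i\ge j+p-1\}$. Hence the only factors $u_i\alpha$ that occur have $i\ge j-1$, and the only bare $u_i$ with $i<p$ occur for $i\le j-1$. Bare $u_p$ or $\alpha$ can only arise from $e_p$, and that requires $p\notin[j,j+p-2]$.

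Now run the greedy loop downward from $i=p$. For $i\ge j$ the only source of $u_i$ is $u_i\alpha$, so the loop removes exactly $a_{p+i}$ copies. The exception is $i=p$ in Case 1, where $u_p^c$ is also present. There the gap forces $j=1$ or $j=p+1$ type constraints, and the $\alpha$-count must be checked as below. At $i=j-1$, both $u_{j-1}$ and $u_{j-1}\alpha$ may occur. However, the total $\alpha$-degree remaining equals $\sum_{i\le j-1}a_{p+i}$, and only index $j-1$ can still consume it. So the loop removes exactly $a_{p+j-1}$ copies and $\alpha$ is exhausted. The remainder is then a monomial in the $u_k$ (Case 1) or a power of $\alpha$ (Case 2), and step (4) appends the correct factors. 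Applying $\psi'$ factorwise then returns $m$.

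\textbf{Main obstacle.} The delicate point is the interaction of $e_p$ with the greedy step at $i=p$, and at the boundary index $i=j-1$. I would split into the two cases of $\phi$.

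In Case 1 ($m\in\langle e_1,\dots,e_{p-1}\rangle$), some $a_k>0$ with $k<p$, so the gap window starts at some $j\ge 2$. If moreover $c>0$, then $p$ lies outside the window, which forces $j\ge p+1$, so $j=p+1$. The window is then $\{p+1,\dots,2p-1\}$, and the only $u_i\alpha$ present is $u_p\alpha$ from $e_{2p}$. The greedy step at $i=p$ removes exactly $a_{2p}$ copies, because the $\alpha$-degree is exactly $a_{2p}$.

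In Case 2 ($m\notin\langle e_1,\dots,e_{p-1}\rangle$), all $u_i\alpha$-factors from $m$ together with the $\alpha^c$ remain. At each $i$, the $u_i$-degree equals the number of $u_i\alpha$-factors, since there are no bare $u_i$. So the greedy removes exactly those, and what is left is $\alpha^c$, handled in step (4).

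I expect writing these index bookkeeping cases cleanly, especially confirming that the $\alpha$-budget is exhausted at the right moment, to be the main work.
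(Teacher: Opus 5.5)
Your proposal is correct and takes the same route as the paper: compute $\phi(m)$ in the two cases of its definition, then check that Construction~\ref{const: factorization} returns exactly that grouping. You go further by justifying the second step with the large-gap bookkeeping and the fact that the extreme coefficients of $(u_p+\alpha)^c$ equal $1$, whereas the paper simply asserts it. One caveat, shared with the paper: for $m=e_p^ae_{2p}^b$ with $a>0$ one gets $\phi(m)=u_p^b\alpha^{a+b}$, which is not in $\In(A)$ by the proof of Corollary~\ref{cor: dim of graded pieces} (e.g.\ $\phi(e_p^2)=\alpha^2$); to get a genuine right inverse into $\In(A)_{\dd}$, such $m$ should be sent to $\In(\varphi(m))=u_p^{a+b}\alpha^b$, which your greedy analysis at $i=p$ shows $\psi$ still maps back to $m$.
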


\begin{proof}
 We compute the composition $\psi\circ\phi$ directly. Given $m\in (S/L)_{\dd}$. If $m \in \langle e_1,\ldots,e_{p-1}\rangle$, then $m = e_1^{a_1}\cdots e_{2p}^{a_{2p}}$ for some exponents $a_i$, with $a_i>0$ for some $i = 1, \ldots, p-1$. Applying $\phi$, we obtain 
 $$\phi(m) = u_1^{a_1} \cdots u_p^{a_p}\cdot(u_1\alpha)^{a_{p+1}}\cdots (u_{2p}\alpha)^{a_{2p}}.$$ The grouping written above is exactly the factorization $x(\phi(m))$, hence applying the map $\psi$ will recover $m$.

 Now suppose that $m\notin \langle e_1,\ldots, e_{p-1}\rangle$. Then $m = e_p^{a_p}\cdots e_{2p}^{a_{2p}}$ for some exponents $a_i$. Applying $\phi,$ we obtain
 $$\phi(m) = \alpha^{a_p}\cdot(u_1\alpha)^{a_{p+1}}\cdots (u_{2p}\alpha)^{a_{2p}},$$
 and again the above grouping is the factorization $x(\phi(m))$. Applying $\psi$ recovers $m$.
\end{proof}

\begin{cor}\label{cor: I is ideal of minors}
   When $q = 2$, Conjecture \ref{conj: determinantal ideal is elimination ideal} holds. That is, the elimination ideal $I = \langle f_0, \ldots, f_{p-1}\rangle \cap \kk[e_1,\ldots,e_{2p}]$ is equal to the determinantal ideal 
   $$J = \mathcal{I}_3
   \begin{pmatrix}
       1 & e_1 & 0 & e_2 & 0 & \cdots & e_{p-1} & 0 \\
       e_p & e_{p+1} & e_1 & e_{p+2} & e_2 & \cdots & e_{p + (p-1)} & e_{p-1} \\
       e_{2p} & 0 & e_{p+1} & 0 & e_{p+2} & \cdots & 0 & e_{p+ (p-1)}
   \end{pmatrix},
   $$
   where $\mathcal{I}_3(-)$ denotes taking the ideal generated by the $3\times3$ minors of the specified matrix.
\end{cor}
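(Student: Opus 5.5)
The argument assembles the containments already established into a chain of initial ideals, then closes the chain with a Hilbert series count. Work in $S=\kk[e_1,\ldots,e_{2p}]$ with the grevlex order and the $\ZZ^p$-multigrading from this section. Every maximal minor of $A$ is homogeneous for this grading, since each column of $A$ consists of coefficients of a single $f_i$, and $\varphi$ is multigraded. So $I=\ker\varphi$ (Proposition \ref{prop: elim ideal as kernel}) and $J$ are multigraded ideals.

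\textbf{Step 1: the chain $L\subseteq \In(J)\subseteq \In(I)$.} First I get $J\subseteq I$. Since $I=\ker\varphi$ is the kernel of a map to a domain, it is prime, hence radical. Proposition \ref{prop: J contained in I} gives $\sqrt J\subseteq\sqrt I=I$ over $\overline{\kk}$. Alternatively, each maximal minor of $A$ maps to zero under $\varphi$, because the vector $(1,-\alpha,\alpha^2)$ (up to the sign convention) annihilates every column after substitution. This settles the inclusion over any $\kk$. Hence $\In(J)\subseteq\In(I)$.

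Next, by Proposition \ref{prop: lead term is antidiagonal} and Proposition \ref{prop: antidiagonal term has no gap}, the leading term of each nonzero maximal minor is an antidiagonal product lying in $L$. The real claim needed is the reverse: every minimal generator of $L$ is such a leading term. I would check this directly for $q=2$. A minimal generator of $L$ is a squarefree monomial with no large gap. Such a monomial is a product of at most three variables $e_ae_be_c$ with $a<b<c$, where $a\le p-1$ (or $a=0$ is allowed via the entry $1$), $b-a\le p-1$, $c-b\le p-1$ and $c\ge p+1$. For each one I pick the corresponding $3\times3$ submatrix of $A'$ whose antidiagonal realizes it. This gives $L\subseteq \In(J)$, so $L\subseteq\In(J)\subseteq\In(I)$.

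\textbf{Step 2: the Hilbert series count.} The quotient $S/\In(I)$ has the same multigraded Hilbert function as $S/I\cong A=\Im\varphi$, which in turn matches $\In(A)$. So $\dim(S/L)_{\dd}\ge\dim \In(A)_{\dd}$ for every $\dd$. Proposition \ref{prop: right inverse} shows $\psi\circ\phi=\mathrm{Id}$, so $\psi:\In(A)_{\dd}\to(S/L)_{\dd}$ is surjective, giving the reverse inequality. The Hilbert functions therefore agree, and $L=\In(J)=\In(I)$.

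\textbf{Step 3: conclusion.} Since $J\subseteq I$ and $\In(J)=\In(I)$, we get $J=I$. The standard argument is that the standard monomials of $\In(J)$ span $S/J$, and the surjection $S/J\to S/I$ then has graded pieces of equal dimension, so it is an isomorphism.

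\textbf{Main obstacle.} I expect the hardest point to be the rigor of Step 2's surjectivity. It requires that $\phi$ is well defined as a map into $\In(A)$, that is, that $\In^\vee(\varphi(m))$ is an initial monomial of some element of $A$, and that the factorization bookkeeping in Construction \ref{const: factorization} matches. If $\phi$ is shaky, I would fall back on a direct count, comparing the number of gapped monomials of $S$ in each multidegree with Corollary \ref{cor: dim of graded pieces}, split into the two cases of that corollary.
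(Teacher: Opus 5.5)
Your proposal is correct and takes the same route as the paper. It builds the chain $L\subseteq \In(J)\subseteq \In(I)$, matches multigraded Hilbert functions via $\psi\circ\phi=\mathrm{Id}$, and concludes $J=I$ from $J\subseteq I$ together with $\In(J)=\In(I)$. Where you depart from the paper, you are repairing steps it glosses over, and the repairs hold up.

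\textbf{The inclusion $L\subseteq\In(J)$.} Propositions \ref{prop: antidiagonal term has no gap} and \ref{prop: lead term is antidiagonal} only show that the leading term of each maximal minor lies in $L$. That is not the inclusion $L\subseteq\In(J)$ the paper cites them for, and your direct check is what supplies it. Here is how to complete it.
\begin{itemize}
\item Let $a_1<\cdots<a_k$ be the indices of a minimal generator of $L$; none equals $2p$, since $e_{2p}$ lies in no window. Put $a_0=0$ and $a_{k+1}=2p$.
\item Membership in $L$ means $a_{s+1}-a_s\le p-1$ for all $s$. Minimality means $a_{s+1}-a_{s-1}\ge p$ for $1\le s\le k$.
\item If $k\ge 4$, then $2p=(a_2-a_0)+(a_4-a_2)+(a_{k+1}-a_4)\ge 2p+1$, a contradiction. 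Hence $k\in\{2,3\}$.
\item Since $A'_{i,j}=e_{pi+j-2p}$, columns $j_1<j_2<j_3$ have antidiagonal $e_{j_3-p}e_{j_2}e_{p+j_1}$.
\item So $e_ae_be_c$ is the antidiagonal of columns $c-p<b<a+p$, and $e_be_c$ is the antidiagonal of columns $c-p<b<p$ (column $p$ carries the entry $1$).
\item These antidiagonals are nonzero, so by Proposition \ref{prop: lead term is antidiagonal} they are leading terms of minors.
\end{itemize}

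\textbf{The map $\phi$.} Your worry is justified: $\phi(e_p)=\In^\vee(u_p+\alpha)=\alpha\notin\In(A)$. More generally, $\phi$ lands outside $\In(A)$ exactly on the standard monomials $e_p^ae_{2p}^b$ with $a>0$. For these, use $\In(\varphi(m))=u_p^{a+b}\alpha^b$, which $\psi$ maps back to $m$. Equivalently, both sides have dimension $1+\lfloor d_p/2\rfloor$ in degree $d_p\epsilon_p$, which is your fallback count. On every other standard monomial, $\phi$ works as written.

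\textbf{A small slip.} The rows of $A$ are indexed by $t^2,t,1$, so the vector annihilating the columns after applying $\varphi$ is $(\alpha^2,-\alpha,1)$, not $(1,-\alpha,\alpha^2)$. With that correction, your substitution argument gives $J\subseteq\ker\varphi$ over any field. This is cleaner than passing through radicals, since Proposition \ref{prop: J contained in I} only yields $\sqrt{J}\subseteq\sqrt{I}$ in general.
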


\begin{proof}
    By Proposition \ref{prop: J contained in I}, $J\subseteq I$, hence we also have $\In(J)\subseteq\In(I)$. By Proposition \ref{prop: lead term is antidiagonal}, the ideal $L$ of Notation \ref{notation for all ideals} is contained in $\In(J)$. Proposition \ref{prop: right inverse} shows the equality of multigraded Hilbert series
    $$\Hilb(S/I,\mathbf{x}) = \Hilb(S/L,\mathbf{x}),$$
    meaning that $L = \In(I) = \In(J)$. This means that $I$ and $J$ also have the same multigraded Hilbert series, hence they are equal. 
\end{proof}

\section{Syzygies in the quadratic and linear case}\label{sec: d = 2 resolution}

In this section, we exhibit an explicit minimal free resolution of the associated graded ideal of $I$ when $q = 2$. This minimal free resolution is linear with respect to the standard grading on $S$, from which we can conclude that the total Betti numbers of $I$ and its associated graded ideal agree.

We now fix $q = 2$, $p\geq 3$, and $f_0,f_1,\ldots,f_{p-1}$ as in (\ref{eq: f_1..f_k}). By Corollary \ref{cor: I is ideal of minors}, the elimination ideal $I = \langle f_0, f_1,\ldots, f_{p-1}\rangle \cap \kk[e_1,\ldots,e_{2p}]$ is generated by the $3\times3$ minors of the matrix

\begin{equation}\label{eq:matrix A}
 A=
\begin{pmatrix}
1  & e_1 & 0 & e_2 & 0 & \cdots & e_{p-1} & 0\\
e_p & e_{p+1} & e_1 & e_{p+2} & e_2 & \cdots & e_{p + (p-1)} & e_{p-1} \\
e_{2p} & 0 & e_{p+1} & 0 & e_{p+2} & \cdots & 0 & e_{p + (p-1)}
\end{pmatrix}.
\end{equation}

\subsection{Establishing the associated graded ideal}

Let us now consider $S = \kk[e_1,\ldots,e_{2p}]$ as a standard graded $\kk$-algebra with $\deg(e_i) = 1$. Let $\mm=(e_1,\ldots,e_{2p})$ be the homogeneous maximal ideal, and let $I$ be the inhomogeneous ideal $\mathcal{I}_3(A)$ generated by the $3\times 3$ minors of $A$. Following \cite[15.10.3]{EisenbudCA}, the \textit{associated graded ring} of $S/I$ with respect to the $\mm$-adic filtration takes the form $S/I'$, where $$I' = \langle \In(f):f\in I\rangle.$$ is the ideal generated by initial forms of elements of $I$. Here, the initial form $\In(f)$ of $f\in S$ is computed by taking the image of $f$ in $\mm^i/\mm^{i+1}$, where $i$ is the largest index for which $f\in \mm^i\setminus \mm^{i+1}$. In other words, $\In(f)$ can be represented by taking the sum of lowest (standard) degree terms of $f$. 

By \cite[Prop 15.28]{EisenbudCA}, one can compute a generating set for $I'$ by first constructing the ideal $\tilde{I}\subset S[t]$, which is generated by homogenizations of generators of $I$, where the variable $t$ of degree 1 is used to homogenize. Then, one computes a Gr\"obner basis for $\tilde{I}$ with respect to any monomial order on $S[t]$ which refines the partial order by $t$-degree. Finally, one dehomogenizes the Gr\"obner basis and takes initial forms.

\begin{prop}\label{prop: 3x3 minors are gb}
    Let $\cG$ denote the set of maximal minors of $A$ of the following two types:
    \begin{enumerate}
    \item[(i)] $
    \det
    \begin{pmatrix} 1 & 0 & 0 \\
    e_p & e_{i} & e_j \\
    e_{2p} & e_{p+i} & e_{p+j} \end{pmatrix} = 
    -e_{p+i}e_j + e_{p+j}e_i
    $,\,\,\, where $1 \leq i < j \leq p-1$, and \\[5pt]
    \item[(ii)] $\det
    \begin{pmatrix}
        1 & e_i & 0 \\
        e_p & e_{p+i} & e_j \\
        e_{2p} & 0 & e_{p+j}
    \end{pmatrix} = e_{p+i}e_{p+j}- e_{i}e_{p+j}e_p + e_ie_je_{2p}
    $ where $1\leq i\leq j \leq p-1$.
\end{enumerate}
Consider the monomial order on $S[t]$ which first compares degree on $t$, then breaks ties using the grevlex order on $S$. Under this monomial order, the homogenizations of elements of $\mathcal{G}$ form a Gr\"obner basis for $\tilde{I}$.
\end{prop}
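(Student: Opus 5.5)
The plan is to avoid running Buchberger's algorithm on $\tilde I$ directly. Instead I would use a Hilbert series comparison in the $\ZZ^p$-multigrading of Section \ref{sec: d = 2 proof}, together with the standard-basis criterion behind \cite[Prop 15.28]{EisenbudCA}. Under that multigrading every $3\times 3$ minor of $A$ is multihomogeneous. Each multigraded piece of $S$ is finite dimensional, and the $\mm$-adic filtration is compatible with the multigrading, so $S/I$ and the associated graded ring $S/I'$ have the same multigraded Hilbert series. By Corollary \ref{cor: I is ideal of minors}, $\In(I)=L$, so $\Hilb(S/I',\xx)=\Hilb(S/L,\xx)$, and this series is also given explicitly by Corollary \ref{cor: dim of graded pieces}.

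First I would identify the relevant initial data. Take the order that favors larger $t$-degree, so that dehomogenizing picks out lowest-degree forms, and break ties by grevlex.
\begin{itemize}
\item The type (i) elements are already homogeneous quadrics. Their initial forms are $e_ie_{p+j}-e_je_{p+i}$, with grevlex leading term $e_je_{p+i}$ for $i<j$.
\item The type (ii) elements homogenize to $t\,e_{p+i}e_{p+j}-e_ie_{p+j}e_p+e_ie_je_{2p}$. Their leading term is $t\,e_{p+i}e_{p+j}$ and their initial form is $e_{p+i}e_{p+j}$.
\end{itemize}
Let $I''\subseteq I'$ be the ideal generated by these initial forms, and let
$$N=\langle e_je_{p+i}\ (1\le i<j\le p-1),\ e_{p+i}e_{p+j}\ (1\le i\le j\le p-1)\rangle\subseteq \In_{\mathrm{grevlex}}(I'').$$
Coefficientwise this gives $\Hilb(S/N)\ge \Hilb(S/I'')\ge\Hilb(S/I')=\Hilb(S/L)$. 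If I can show $\Hilb(S/N,\xx)=\Hilb(S/L,\xx)$, then all these inequalities are equalities. Hence $I''=I'$ and $N=\In(I')$, which says the elements of $\cG$ form a standard basis of $I$. By the equivalence in \cite[Prop 15.28]{EisenbudCA}, their homogenizations then form a Gr\"obner basis of $\tilde I$ for the stated order. I would also double-check that the ideal generated by the homogenized $\cG$ is exactly $\tilde I$. Since every minor dehomogenizes into $I=\langle\cG\rangle$ by the equality just obtained, and the theory of 15.28 works with the $t$-saturated homogenization, this should be a formal point.

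The main step is the count of standard monomials of $N$. A monomial avoids $N$ exactly when two conditions hold. First, it contains at most one factor $e_{p+i}$ with $1\le i\le p-1$, and that factor appears to the first power. Second, if such a factor $e_{p+i}$ appears, then no $e_j$ with $j>i$, $j\le p-1$, divides the monomial. The variables $e_p$ and $e_{2p}$ are unrestricted. I would then count these monomials in each multidegree $\dd=(d_1,\ldots,d_p)$ and match the count against Corollary \ref{cor: dim of graded pieces}.
\begin{itemize}
\item If $d_1=\cdots=d_{p-1}=0$, the standard monomials are $e_p^ae_{2p}^b$ with $a+2b=d_p$. There are $1+\lfloor d_p/2\rfloor$ of them, which matches.
\item Otherwise, let $k=\max\{i\le p-1: d_i>0\}$. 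Any standard monomial contains either no $e_{p+i}$ with $i\le p-1$, or exactly $e_{p+k}$ once. For each of these two cases, the remaining freedom is the split of the $\epsilon_p$-degree between $e_p$ and $e_{2p}$.
\end{itemize}

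The delicate part is this last bookkeeping. In the second case, $e_{p+k}$ consumes one unit of $d_k$ and one of $d_p$, and one must check that the two cases together give exactly $d_p+1$ monomials. If the naive count instead gives roughly $\lfloor d_p/2\rfloor$-type numbers, I would fall back on a direct Buchberger check. Only the pairs with overlapping leading terms need to be reduced: $e_je_{p+i}$ with $e_{j'}e_{p+i}$, $e_je_{p+i}$ with $t e_{p+i}e_{p+k}$, and $te_{p+i}e_{p+j}$ with $te_{p+j}e_{p+k}$. I expect these to reduce to zero via the Pl\"ucker-type relations among the $2\times 2$ minors $e_ie_{p+j}-e_je_{p+i}$.
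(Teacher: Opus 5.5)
Your Hilbert-series comparison is correct, and the count closes. When $k=\max\{i\le p-1:d_i>0\}$ exists, the two cases give $(1+\lfloor d_p/2\rfloor)+(1+\lfloor (d_p-1)/2\rfloor)=d_p+1$ standard monomials (and $1$ if $d_p=0$). So you do obtain $I''=I'$ and $N=\In(I')$, which is a computer-free proof of Corollary~\ref{cor: weight vector}.

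The gap is the final step. What \cite[Prop 15.28]{EisenbudCA} provides is one direction: a Gr\"obner basis of $\tilde I$ dehomogenizes to elements whose initial forms generate $I'$. The converse you invoke is false in general. Your order ranks every monomial containing $t$ above every monomial of $S$, so it is an elimination order for $t$. Hence if $\tilde\cG$ were a Gr\"obner basis, its $t$-free members (the type (i) minors) would have to be a Gr\"obner basis of $\tilde I\cap S$. That set consists of standard-homogeneous elements of $I$, and the Hilbert series of $S/I'$ carries no information about it. For example, take $\{y-x^2,\,v-u^2\}\subset\kk[x,y,u,v]$, which is homogeneous for $\deg x=\deg u=1$, $\deg y=\deg v=2$. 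Its lowest forms $y,v$ generate $I'$ with matching Hilbert series. Yet $v(ty-x^2)-y(tv-u^2)=yu^2-x^2v$ has a leading term divisible by neither $ty$ nor $tv$. In your setting the danger is a standard-homogeneous $r\in I$ whose grevlex leading term is divisible by some $e_{p+i}e_{p+j}$ but by no $e_je_{p+i}$ with $i<j$; then no leading term of $\tilde\cG$ can divide it. So this is not a formal saturation point.

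To close the gap, show that every standard-homogeneous element of $I$ lies in the ideal $\mathcal{I}_2(B)$ of type (i) minors.
\begin{enumerate}
\item[(1)] The ideal $I^*$ generated by the homogeneous elements of the prime $I=\ker\varphi$ is prime.
\item[(2)] $I^*$ contains $\mathcal{I}_2(B)$, which is prime of height $p-2$.
\item[(3)] $I^*$ is properly contained in $I$, because $I$ is not homogeneous: the type (ii) minors lie in $I$ but $e_{p+i}e_{p+j}$ does not. Since $I$ has height $p-1$, this forces $I^*=\mathcal{I}_2(B)$.
\item[(4)] The grevlex initial ideal of $\mathcal{I}_2(B)$ is $\langle e_je_{p+i}:i<j\rangle$, since maximal minors of a generic matrix form a universal Gr\"obner basis.
\end{enumerate}
Now take a nonzero homogeneous normal form $R\in\tilde I$ of an $S$-pair. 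If $R$ involves $t$, then $\In(R)=t^{c}m$ with $c\ge1$, where $m$ is the grevlex leading term of the lowest form of $0\ne R|_{t=1}\in I$. Then $m\in N$, so some $\In(\tilde g)$ divides $\In(R)$; note $c\ge1$ is what absorbs the $t$ in $t\,e_{p+i}e_{p+j}$. If $R\in S$, then $R\in I^*=\mathcal{I}_2(B)$, so a type (i) leading term divides $\In(R)$. Either case contradicts $R$ being a normal form.

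With this addition your route gives a uniform proof for all $p$. The paper instead verifies Buchberger's criterion by computer at $p=5$ and argues that this case already exhibits every relative order of the indices.
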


\begin{proof}
    We first note that $\cG$ generates $I$. If a chosen maximal submatrix of $A$ has two zeroes in the same row, then its determinant is a multiple of a minor of type (i). Otherwise, it is a determinant of type (ii) but with $i > j$ and can be rewritten using the relation
    $$
    e_{p+i}e_{p+j}- e_{j}e_{p+i}e_p + e_ie_je_{2p} = (e_{p+i}e_{p+j}- e_{i}e_{p+j}e_p + e_ie_je_{2p}) - e_p(e_{p+i}e_j - e_{p+j}e_i).
    $$
    Let $\tilde{\cG}$ denote the set of polynomials in $S[t]$ which are the homogenizations of elements of $\cG$. By definition, $\tilde{\cG}$ generates $\tilde{I}$.
    By Buchberger's criterion, $\tilde{\cG}$ is a Gr\"obner basis if and only if all $S$-pairs among elements of $\tilde\cG$ reduce to 0 mod $\tilde\cG$. More specifically, one needs to check all $S$-pairs $S(f_{n, i, j}, f_{m, k, \ell})$, where $f_{n,i,j}$ denotes the homogenization of the minor of type $n$ with parameters $i$ and $j$. Here, $n$ is (i) or (ii), and $1\leq i,j \leq p-1$. The $S$-pair will depend on the relative orders of $i,j,k,\ell$. All possible relative orders are present when $p = 5$, hence $\tilde\cG$ is a Gr\"obner basis for $p = 5$ if and only if it is a Gr\"obner basis for all $p$. One can verify by computer, e.g. via \texttt{Macaulay2}, that $\tilde\cG$ does form a Gr\"obner basis for $p = 5$.
\end{proof}

\begin{cor}\label{cor: weight vector}
    The associated graded ideal $I'$ is equal to the sum of ideals 
    $$I' = \mathcal{I}_2
    \begin{pmatrix}
        e_1 & e_2 & \cdots & e_{p-1}\\
        e_{p+1} & e_{p+2} & \cdots & e_{p + (p-1)}
    \end{pmatrix} + \langle e_{p+1}, e_{p+2}, \ldots e_{p + (p-1)}\rangle^2.
    $$
\end{cor}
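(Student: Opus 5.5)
The plan is to read off the initial forms directly from the Gröbner basis of Proposition~\ref{prop: 3x3 minors are gb}, using \cite[Prop 15.28]{EisenbudCA}. That result says $I'$ is generated by the lowest-degree forms of the dehomogenized elements of any Gröbner basis of $\tilde I$, taken for a monomial order on $S[t]$ that refines $t$-degree. Proposition~\ref{prop: 3x3 minors are gb} supplies such a Gröbner basis, namely the homogenizations $\tilde{\cG}$. Dehomogenizing them returns $\cG$, so
$$I' = \langle \In(g) : g\in\cG\rangle,$$
where $\In(g)$ is the sum of the standard-degree-lowest terms of $g$.

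\textbf{Step 1: type (i) minors.} For $1\le i<j\le p-1$, the minor $e_{p+j}e_i - e_{p+i}e_j$ is already homogeneous of degree $2$. Its initial form is itself, which is (up to sign) the $2\times 2$ minor of columns $i,j$ of the matrix
$$\begin{pmatrix} e_1&\cdots&e_{p-1}\\ e_{p+1}&\cdots&e_{2p-1}\end{pmatrix}.$$
As $i<j$ range, these give exactly the generators of the $\mathcal{I}_2$ summand.

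\textbf{Step 2: type (ii) minors.} For $1\le i\le j\le p-1$, the minor $e_{p+i}e_{p+j} - e_ie_{p+j}e_p + e_ie_je_{2p}$ has a unique degree-$2$ term, namely $e_{p+i}e_{p+j}$; the remaining terms have degree $3$. So its initial form is $e_{p+i}e_{p+j}$. As $i\le j$ range over $1,\dots,p-1$, these are precisely all the monomial generators of $\langle e_{p+1},\ldots,e_{2p-1}\rangle^2$, including the squares coming from $i=j$.

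Combining Steps 1 and 2, the set $\{\In(g): g\in\cG\}$ is exactly the stated generating set of the sum of ideals, which gives the equality. There is essentially no obstacle once Proposition~\ref{prop: 3x3 minors are gb} is granted. The only points needing care are checking that the hypotheses of \cite[Prop 15.28]{EisenbudCA} are met by the chosen order (it first compares $t$-degree, so it refines the $t$-degree partial order) and noting that each element of $\cG$ has a well-defined lowest-degree part, as computed in Steps 1 and 2.
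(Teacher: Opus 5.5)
Your proposal is correct and is essentially the paper's own argument. You apply \cite[Prop 15.28]{EisenbudCA} to the Gr\"obner basis of Proposition~\ref{prop: 3x3 minors are gb} and read off the lowest-degree forms: each type (i) minor is its own initial form, and each type (ii) minor has initial form $e_{p+i}e_{p+j}$ with $i\le j$; together these are exactly the stated generators.
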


\begin{proof}
    By Proposition \ref{prop: 3x3 minors are gb}, generators for $I'$ can be computed by taking initial forms of the minors of types (i) and (ii). These initial forms are
     \begin{align*}
         \In(e_{p+i}e_j - e_{p+j}e_i) &= e_{p+i}e_j - e_{p+j}e_i \text{ for all }i< j, \text{ and }\\
         \In(e_{p+i}e_{p+j} - e_{i}e_{p+j}e_p + e_ie_je_{2p})&= e_{p+i}e_{p+j} \text{ for all }i,j.
     \end{align*}
     Hence we obtain the desired initial form ideal.
\end{proof}

\begin{remark}
    We note that when $p = 3$, the ideal $I'$ appears in work of Mantero--Mastroeni \cite[Thm A]{mantero-mastroeni}, where the authors classify Koszul algebras generated by 4 quadrics. The resolution described there also agrees with the equivariant resolution constructed in Section \ref{sec: d = 2 resolution}.
\end{remark}

\subsection{Equivariant resolution of the initial ideal}
Consider the short exact sequence of $S$-modules
\begin{equation}\label{eq: ses}
    0 \to K \to S/\nn^2 \to S/I' \to 0,
\end{equation}
where $\nn = \langle e_{p+1}, e_{p+2}, \ldots, e_{p+(p-1)}\rangle$ and $K$ denotes the kernel of the surjection $S/\nn^2 \to S/I'$. We will use a mapping cone to construct a minimal resolution of $S/I'$ from resolutions of $K$ and $S/\nn^2$. 

\begin{notation}\label{notation: V W}
    Given a finite dimensional $\kk$-vector space $U\cong \kk^m$, let $\GL(U)\cong \GL_{m}$ denote the general linear group of invertible $\kk$-linear maps $U\to U$. If $\lambda = (\lambda_1,\lambda_2,\ldots,\lambda_k)$ is a partition with $\lambda_1 \geq \lambda_2 \geq \cdots \geq \lambda_k> 0$, we denote by $\SS^\lambda U$ the Schur functor applied to $U$. We use the characteristic free definition of Schur functors found in \cite{ABW}. The vector space $\SS^\lambda(U)$ has a $\kk$-basis of semistandard Young tableaux of shape $\lambda$ and entries $\leq m$. For example, if $m = 3$ and $\lambda = (2,1)$, a basis for $\SS^{\lambda}(U)$ is
    $$
    \left\{
    \begin{ytableau}
        1 & 1 \\2
    \end{ytableau},\,\,
    \begin{ytableau}
        1 & 1 \\3
    \end{ytableau},\,\,
    \begin{ytableau}
        1 & 2 \\2
    \end{ytableau},\,\,
    \begin{ytableau}
        1 & 2 \\3
    \end{ytableau},\,\,
    \begin{ytableau}
        2 & 2 \\3
    \end{ytableau},\,\,
    \begin{ytableau}
        2 & 3 \\3
    \end{ytableau}
    \right\}
    $$
    The symmetric algebra on $U$ will be denoted by $S(U)$ and has graded decomposition $$S(U)= \bigoplus_{d\geq0}S_d(U),$$
    where $S_d(U)$ denotes the $d^{\text{th}}$ symmetric power of $U$. We next define some $\GL(U)$-equivariant maps. All tensor symbols $\otimes$ are $\otimes_\kk$, unless otherwise specified. For each $k\geq 2$, let  $$\Delta: \textstyle{\bigwedge^kU\to \bigwedge^{k-1}U\otimes U}$$ be the map $\Delta(u_{i_1}\wedge u_{i_2}\wedge\cdots\wedge u_{i_k}) = \sum_{j=1}^k (-1)^j u_{i_j}\otimes u_{i_1}\wedge\cdots\wedge\widehat{u_{i_j}}\wedge\cdots\wedge u_{i_k}$. Similarly, let $$m:S_{k-1}(U)\otimes U \to S_k(U)$$ denote multiplication in the symmetric algebra, where we identify $U$ with $S_1(U)$.

    Let $V$ and $W$ be the $\kk$-vector spaces with bases $\{e_1,e_2\ldots, e_{p-1}\}$ and $\{e_{p+1}, e_{p+2}, \ldots, e_{p+(p-1)}\}$, respectively. Let $S' = S(V)\otimes S(W)$. Any map of the form $\SS^\lambda V\to \SS^\lambda W$ or $S'\otimes \SS^\lambda V  \to S' \otimes \SS^\mu W$ is precomposed with the isomorphism $\SS^\lambda V \to \SS^\lambda W$ induced from the isomorphism $V\to W, e_i \mapsto e_{p+i}$.
\end{notation}

\subsubsection{Resolution of the kernel}
\label{subsubsec: res of K}First we discuss the minimal resolution of $K$, which arises from a double complex built from two Koszul complexes. We take a moment here to recall details on Koszul complexes and establish notation.

\begin{defn}\label{def: koszul complex}
    Let $U$ be a finite-dimensional $\kk$-vector space $U$ with basis $\{u_1,\ldots,u_r\}$. Define the \textbf{Koszul complex} $(\cK^U, \partial^U)$ to be the chain complex of $S(U)$-modules with terms $$\cK^U_i = S(U)\otimes\textstyle{\bigwedge^i(U)}$$ and differential
    \begin{align*}
        \partial^U_i: S(U)\otimes\textstyle{\bigwedge^iU}&\longrightarrow S(U)\otimes\textstyle{\bigwedge^{i-1}U}\\
        s\otimes u_{j_1}\wedge \cdots \wedge u_{j_i} &\mapsto \sum_{k = 1}^i (-1)^k su_{j_k} \otimes u_{j_1} \wedge \cdots \wedge \widehat{u_{j_k}} \wedge\cdots \wedge u_{j_i}.
    \end{align*}
The differential can be described in a coordinate-free way as the composition
\begin{center}
\begin{tikzcd}
    S(U)\otimes \bigwedge^iU \arrow[r, "1\otimes\Delta"]&
    S(U)\otimes U \otimes \bigwedge^{i-1}U \arrow[r, "m\otimes 1"] & 
    S(U) \otimes \bigwedge^{i-1}U.
\end{tikzcd}
\end{center}
Since $(u_1,\ldots,u_r)$ forms a regular sequence in $S(U)$, the complex $\cK^U$ satisfies $H_i(\cK^U) = 0$ if $i>0$, and $H_0(\cK^U) = S(U)/\langle u_1,\ldots,u_r\rangle$. 
\end{defn}

We now construct our resolution $\cF$ of $K$. We work over $S'$ rather than $S$, noting that the extension $S'\to S$ is flat and will not change homology. Adopt Notation \ref{notation: V W} and define a double complex $\mathcal{C}$ of $S'$-modules by setting 
\begin{align*}
    C_{p,q} &= S' \otimes \textstyle{\bigwedge^{p+2}V}\otimes \textstyle{\bigwedge^qW} \\
    \partial^v &=  1\otimes (-1)^p\partial^W_q:C_{p,q}\to C_{p, q-1}\\
    \partial^h &= \partial^V_p\otimes 1:C_{p,q}\to C_{p-1,q}.
\end{align*}
This is indeed a double complex; one can verify directly that 
$0 = (\partial^v)^2 = (\partial^h)^2 = \partial^v\partial^h + \partial^h\partial^v$. Let $\cF$ denote the total complex $\Tot(\cC)$ with terms $$\cF_n = \bigoplus_{p+q = n}C_{p,q}$$ and differential $$\partial = \partial^v + \partial^h.$$ To any double complex, one can associate a column filtration on its total complex, and this gives rise to a spectral sequence $E^r_{p,q}$ (for further details, see \cite[Ch. 5]{Weibel}). Because $\cC$ is a double complex supported only in the first quadrant (that is, $C_{p,q}=0$ if $p<0$ or $q<0$), this spectral sequence converges to the homology of the total complex:
$$E^2_{p,q}\implies H_{p+q}(\cF).$$
We analyze this spectral sequence to compute the homology of $\cF$. 

\begin{prop}\label{prop: double complex acyclic}
    The total complex $\cF$ satisfies $H_i(\cF) = 0$ unless $i = 0$.
\end{prop}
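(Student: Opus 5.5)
The plan is to run the spectral sequence of the column filtration mentioned just before the statement. First take homology in the vertical direction. Each column $C_{p,\bullet}$ is
\[
S'\otimes \textstyle{\bigwedge^{p+2}V}\otimes\textstyle{\bigwedge^{\bullet}W}=S(V)\otimes\textstyle{\bigwedge^{p+2}V}\otimes\big(S(W)\otimes \textstyle{\bigwedge^\bullet W}\big).
\]
The vertical differential is $\pm\partial^W$, which acts only on the last factor. Since $\cK^W$ is a resolution of $S(W)/\langle e_{p+1},\ldots,e_{2p-1}\rangle\cong\kk$, tensoring over $\kk$ with the free module $S(V)\otimes\bigwedge^{p+2}V$ preserves exactness. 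The sign $(-1)^p$ does not affect homology. Therefore
\[
E^1_{p,q}=0 \text{ for } q>0, \qquad E^1_{p,0}=S(V)\otimes\textstyle{\bigwedge^{p+2}V}\otimes \kk\cong S(V)\otimes\textstyle{\bigwedge^{p+2}V},
\]
where $S(W)$ now acts through its augmentation.

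Next, the horizontal differential induces on the row $E^1_{\bullet,0}$ the map $\partial^V\otimes 1$, restricted to the terms $\bigwedge^{k}V$ with $k\ge 2$. So $E^1_{\bullet,0}$ is the truncated Koszul complex
\[
\cdots\to S(V)\otimes\textstyle{\bigwedge^{3}V}\to S(V)\otimes\textstyle{\bigwedge^{2}V}\to 0,
\]
where $\bigwedge^{p+2}V$ sits in position $p$. By the exactness of $\cK^V$ from Definition~\ref{def: koszul complex}, the only place this truncation can have homology is its bottom, $p=0$. Hence $E^2_{p,0}=0$ for $p>0$, and $E^2_{0,0}=\operatorname{coker}\big(S(V)\otimes\bigwedge^3V\to S(V)\otimes\bigwedge^2V\big)$. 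That cokernel is the image of $S(V)\otimes\bigwedge^2V$ in $S(V)\otimes V$, i.e.\ a second syzygy module of $\kk$; we do not need it explicitly.

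With $E^2_{p,q}$ concentrated at $(0,0)$, the sequence degenerates at $E^2$. The first-quadrant convergence quoted before the statement then gives $H_i(\cF)=0$ for $i\ne0$.

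The only step needing real care is the identification of the $E^1$ page. Two points have to be checked. The first is that the vertical homology is computed correctly as $\kk$-tensor products: this holds because $S'=S(V)\otimes_\kk S(W)$ and everything is free over $\kk$, so $H(M\otimes_\kk \cK^W)=M\otimes_\kk H(\cK^W)$. The second is that the induced horizontal map really is the $V$-Koszul differential. This holds because $\partial^h$ commutes with the projection $S(W)\to\kk$.

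I would also note that the same argument with rows and columns swapped does not work, because truncating the $V$-Koszul complex is what creates the nonzero $E^2_{0,0}$. The order chosen (vertical homology first) is therefore the right one.
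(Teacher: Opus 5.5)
Your argument is correct and is essentially the paper's: the column-filtration spectral sequence, with exactness of $\cK^W$ collapsing $E^1$ onto the row $q=0$, followed by exactness of the truncated $V$-Koszul complex in positive degrees. Your direct identification $S'/\nn S'\cong S(V)$ streamlines the paper's regular-sequence argument that $\cL\otimes S'/\nn$ is acyclic. One correction to your closing aside, which plays no role in the proof: filtering by rows first also works. Horizontal homology is concentrated in column $0$ as $M_V\otimes_\kk S(W)\otimes\bigwedge^q W$, where $M_V$ is the cokernel of $S(V)\otimes\bigwedge^3V\to S(V)\otimes\bigwedge^2V$, and $M_V\otimes_\kk\cK^W$ is again exact in positive degrees.
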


\begin{proof}
    Consider the spectral sequence associated to the column filtration of $\cF = \Tot(\cC)$. The $E^0$ page of has terms $E^0_{p,q} = C_{p,q}$, and the differentials coincide with the vertical differentials $\partial^v$. The $p^{\text{th}}$ column of the $E^0$ page is thus the Koszul complex $\cK^W$ of $S(W)$-modules tensored with $S(V)\otimes \bigwedge^{p+2}V$. In other words, 
    \begin{align*}
        E^0_{p,*} &= \textstyle{\bigwedge^{p+2}V}\otimes  S(V) \otimes\cK^W \\
        &= \textstyle{\bigwedge^{p+2}V}\otimes S(V) \otimes S(W)\otimes_{S(W)} \cK^W\\
        &= \textstyle{\bigwedge^{p+2}V} \otimes S' \otimes_{S(W)} \cK^W.
    \end{align*}
   %$$E^0_{p,*} = \cK^W\otimes_{S(W)}S'\otimes_{S'}\bigwedge^{p+2}V = \cK^W \otimes_{S(W)}\bigwedge^{p+2}V.$$
    Recall that $\cK^W$ only has homology in degree 0. Since extending scalars to $S'$ and tensoring with a free module are both flat functors, we conclude that $H_q(E^0_{p,*}) = 0$ unless $q=0$. Hence, the $E^1$ page of the spectral sequence is concentrated in the single row $q = 0$, with differentials induced from $\partial^h$. 
    
    Let $\nn = (e_{p+1}, \ldots, e_{p + (p-1)})\subset S(W)$. Since $H_0(\cK^W)\cong S(W)/\nn$, the nonzero row $E^1_{p,0}$ has the form
    \begin{center}
        \begin{tikzcd}
        S(V)\otimes\bigwedge^2V\otimes S(W)/\nn &
        S(V)\otimes\bigwedge^3V\otimes S(W)/\nn \arrow[l, "\partial^V\otimes 1" above] &
        S(V)\otimes\bigwedge^4V\otimes S(W)/\nn \arrow[l, "\partial^V \otimes 1" above]&
        \cdots \arrow[l, "\partial^V\otimes 1" above]
        \end{tikzcd}
    \end{center}
    or equivalently
    \begin{center}
        \begin{tikzcd}
            S'/\nn S'\otimes \bigwedge^2V &
            S'/\nn S' \otimes \bigwedge^3V \arrow[l, "\partial^V" above] &
            S'/\nn S' \otimes \bigwedge^4V \arrow[l, "\partial^V" above] &
            \cdots \arrow[l, "\partial^V" above]
        \end{tikzcd}
    \end{center}
    Now, let $M$ be the $S'$-module $\operatorname{coker}(S'\otimes\bigwedge^3V\overset{\partial^V}{\longrightarrow} S'\otimes\bigwedge^2V)$. The truncated Koszul complex $\cL$ whose $i^{\text{th}}$ term is $S'\otimes \bigwedge^{i+2}V$ gives a free resolution of $M$ over $S'$. The sequence $(e_{p+1}, \ldots, e_{p+(p-1)})$ is regular on $M$ and on $S'$. Hence, $\cL\otimes S'/\nn$ is a free resolution of $M/\nn M$ over $S'/\nn$. In particular, $H_i(\cL\otimes S'/\nn)=0$ if $i>0$. But $\cL\otimes S'/\nn$ is exactly the complex appearing in the $0^{\text{th}}$ row of the $E^1$-page. Hence, the $E^2$-page is only nonzero if $(p,q) = (0,0)$. We therefore have $E^2_{p,q} = E^\infty_{p,q}$, and we conclude by convergence of the spectral sequence to $H_*(\cF)$ that $H_i(\cF) = 0$ if $i\neq 0$.
\end{proof}

\begin{prop}
    The total complex $\cF$ is a free resolution of $K$, that is, $H_0(\cF) \cong K$ as $S$-modules.
\end{prop}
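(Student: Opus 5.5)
We must show $H_0(\cF)\cong K$, where $K=I'/\nn^2$ is the kernel of $S/\nn^2\to S/I'$. By Corollary \ref{cor: weight vector}, $I'=\mathcal{I}_2+\nn^2$, where $\mathcal{I}_2$ is generated by the $2\times 2$ minors $e_je_{p+i}-e_ie_{p+j}$. Hence $K=(\mathcal{I}_2+\nn^2)/\nn^2$, which is generated as an $S$-module by the classes of these minors. The plan is to compute $H_0(\cF)$ explicitly, write down a surjection onto $K$, and then check that this surjection is injective.

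\textbf{Step 1: compute $H_0(\cF)$.} Since $\cF_0=C_{0,0}=S'\otimes\bigwedge^2V$ and $\cF_1=C_{1,0}\oplus C_{0,1}$, we have
$$H_0(\cF)=S'\otimes\textstyle\bigwedge^2V\,\big/\,\big(\Im\partial^V_3+\nn\,(S'\otimes\bigwedge^2V)\big)=M/\nn M,$$
where $M=\operatorname{coker}(\partial^V_3)$ as in the proof of Proposition \ref{prop: double complex acyclic}. Because the truncated Koszul complex $\cL$ resolves $M$, $M$ is the image of $\partial^V_2$, i.e.\ the first syzygy module $\langle e_1,\dots,e_{p-1}\rangle\subset S(V)$ extended to $S'$. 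Explicitly, $M\cong \mathfrak{v}S'$ with $\mathfrak v=\langle e_1,\ldots,e_{p-1}\rangle$, via $e_i\wedge e_j\mapsto$ the Koszul relation. Concretely, $M$ is generated by symbols $[e_i\wedge e_j]$ subject only to the Koszul (three-term) relations coming from $\bigwedge^3V$.

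\textbf{Step 2: define the map.} Send $e_i\wedge e_j\mapsto$ the class of $e_je_{p+i}-e_ie_{p+j}$ in $S/\nn^2$, adjusting sign to match $\Delta$. Equivariantly, this is $\bigwedge^2V\to V\otimes V\to V\otimes W$ followed by multiplication, using the fixed isomorphism $V\cong W$. I will then verify two things. First, the Koszul relations map to zero: the image of $\partial^V_3(e_i\wedge e_j\wedge e_k)$ is $e_i m_{jk}-e_jm_{ik}+e_km_{ij}$, which is a Plücker-type identity and vanishes already in $S$. Second, $\nn\cdot$(generators) map to zero, since $e_{p+\ell}(e_je_{p+i}-e_ie_{p+j})\in\nn^2$. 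The map therefore descends to a surjection $H_0(\cF)\to K$, surjective because the minors generate $K$.

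\textbf{Step 3: injectivity, which I expect to be the main obstacle.} I will compare multigraded Hilbert series, using the $\ZZ^{p-1}\times\ZZ$ grading in which $e_i$ and $e_{p+i}$ share the $V$-weight $\epsilon_i$ and are distinguished by a second degree; alternatively I will use the standard bigrading by $V$-degree and $W$-degree. As a module over $S(V)\otimes S(W)/\nn=S(V)\otimes\kk$, $M/\nn M\cong \mathfrak v\otimes_{S(V)}\!$-twist concentrated in $W$-degree $1$ (after the shift coming from the map). Hence $H_0(\cF)$ is $S(V)$-free-like with Hilbert series that of $\mathfrak v$ shifted by one $W$-degree, and in particular it lives entirely in $W$-degree exactly $1$. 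On the other side, $K$ lives in $W$-degree $1$ as well, since $\nn^2$ kills everything of $W$-degree $\ge2$. The degree-$(\ast,1)$ part of $K$ is the $S(V)$-submodule of $S(V)\otimes W$ generated by the minors. This is exactly the image of the map $S(V)\otimes W\to S(V)$ given by $w_i\mapsto$, i.e.\ the kernel of the surjection $S(V)\otimes W\to\mathfrak v$, $e_{p+i}\mapsto e_i$, which is the first Koszul syzygy module. It is therefore isomorphic to $\operatorname{coker}\partial^V_3=\Im\partial^V_2$ by exactness of $\cK^V$. The induced map $H_0(\cF)\to K$ is then identified with this isomorphism, so it is injective.

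The remaining care is bookkeeping of the identifications and signs, which I will handle by working over $S'$ and invoking flatness of $S'\to S$ as in Section \ref{subsubsec: res of K}.
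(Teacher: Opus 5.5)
Your argument is correct, but it takes a different route from the paper's.

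\textbf{How the paper argues.} It first shows $K\cong J/\nn J$, where $J$ is the ideal of $2\times 2$ minors. It then takes the Eagon--Northcott presentation $[X\ Y]$ of $J$ and tensors with $S/\nn$. The block $Y$, whose entries lie in $\nn$, is absorbed by the $\bigwedge^2V\otimes W$ relations. The remaining presentation is recognized as $\cF_1\to\cF_0$.

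\textbf{How you argue.} You compute $H_0(\cF)=M/\nn M\cong\operatorname{coker}(\partial^V_3)$ over $S(V)$, with $\nn$ acting by zero. Using the $(V,W)$-bigrading, you see that $K$ is exactly the $W$-degree-one part of the minor ideal. Via $W\cong V$, you identify that part with $\ker(S(V)\otimes V\to S(V))=\Im\partial^V_2$. Exactness of $\cK^V$ at $\bigwedge^2V$ then shows that the surjection $H_0(\cF)\to K$ is the isomorphism $\operatorname{coker}\partial^V_3\to\Im\partial^V_2$.

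\textbf{What each route buys.} Yours needs only the Koszul complex, not Eagon--Northcott. The bigrading also settles cleanly the equality $J\cap\nn^2=\nn J$, which the paper's injectivity argument for $J/\nn J\to K$ handles only informally. The paper's route, in exchange, shows how $\cF$ arises from the standard resolution of the determinantal ideal.

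\textbf{One thing to fix.} In Step 1 you write $M\cong\mathfrak v S'$. In Step 3 you say $H_0(\cF)$ has the Hilbert series of $\mathfrak v$ shifted and is ``$S(V)$-free-like''. These claims are false. The module $\operatorname{coker}\partial^V_3\cong\Im\partial^V_2$ is the syzygy module of $e_1,\dots,e_{p-1}$, not $\mathfrak v$ itself; for $p\ge 4$ it has rank $p-2$ rather than $1$. A Hilbert-series comparison built on the $\mathfrak v$ identification would not match $K$. Your final paragraph of Step 3 uses the correct identification and needs no Hilbert series at all. Delete those claims and the proof is complete.
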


\begin{proof}
    First, we show that $K\cong J/\nn J$ as $S$-modules, where $J$ is the ideal of $S$ generated by the $2\times 2$ minors of
    $$ B = 
    \begin{pmatrix}
        e_1 & e_2 & \cdots & e_{p-1} \\
        e_{p+1} & e_{p+2} & \cdots & e_{p + (p-1)}
    \end{pmatrix}.
    $$
    Since $K$ is the $S$-submodule of $S/\nn^2$ generated by the $2\times 2$ minors of $B$, there is a surjective map $J \to K$. Any $\nn$-multiple of a $2\times 2$ minor lies in $\nn^2$, hence is 0 in $K$. Thus, the map $J\to K$ factors through $J/\nn J$, giving a surjection $\pi:J/\nn J\to K$. We claim that $\pi$ is an isomorphism. Given 
    $$f = \sum_{1\leq i<j \leq p-1} s_{i,j} \det\begin{pmatrix} e_i & e_j \\ e_{p+i} & e_{p+j} \end{pmatrix},$$
    $\pi(f) = 0$ in $K$ if and only if $\pi(f)\in \nn^2$. With respect to the above expression, this means that either all $s_{i,j}$ lie in $\nn$ or that the entire expression is 0 in $J$. Hence, the map $\pi$ is injective out of $J/\nn J$.

    Now that we have $J/\nn J\cong K$, we may build a presentation for $J/\nn J$ that agrees with the presentation $\cF_1 \to \cF_0$. Since $J$ is generated as an $S$-module by the $2\times 2$ minors of $B$, there is a surjection
    \begin{align*}
        S'\otimes_\kk \textstyle{\bigwedge^2V} &\overset{\varphi_0}{\longrightarrow} J/\nn J \\
        e_i\wedge e_j &\mapsto \det\begin{pmatrix} e_i & e_j \\ e_{p+i} & e_{p+j} \end{pmatrix}.
    \end{align*}
    Moreover, $J$ is minimally resolved over $S$ by the Eagon--Northcott complex (see \cite[A2.6]{EisenbudCA}), giving a map 
    \begin{center}
        \begin{tikzcd}
            S' \otimes \left(\bigwedge^3V\oplus\bigwedge^3V\right) \arrow[r, "\varphi_{1,1}"] &
            S'\otimes\bigwedge^2V
        \end{tikzcd}
    \end{center} 
    whose image is in the kernel of $\varphi_0$. By construction of the Eagon--Northcott complex, this map can be represented by a block matrix $[X \,\,\, Y]$, where $X$ agrees with the Koszul differential $S'\otimes \bigwedge^3 V\to S'\otimes \bigwedge^2 V$ and $Y$ has all entries lying in $\nn$. More specifically, the columns of $X$ come from expanding determinants of the form
    $$\det\begin{pmatrix}
        e_i & e_j & e_k \\ e_i & e_j & e_k \\
        e_{p+i} & e_{p+j} & e_{p+k}
    \end{pmatrix}$$
    along the first row to give relations among $2\times 2$ minors. Similarly, the columns of $Y$ come from expanding determinants of the form 
    $$\det\begin{pmatrix}
        e_{p+i} & e_{p+j} & e_{p+k} \\ e_i & e_j & e_k \\
        e_{p+i} & e_{p+j} & e_{p+k}
    \end{pmatrix}$$
    along the first row.
    To capture the additional relations imposed by factoring out $\nn J$, we use the map
    \begin{center}
        \begin{tikzcd}
             S'\otimes \left(\bigwedge^2 V\otimes W\right) \arrow[r, "\varphi_{1,2}"] & S'\otimes\bigwedge^2V
        \end{tikzcd}
    \end{center}
    which multiplies $W$ into $S' \supseteq S(W)$. Then, the map
    \begin{center}
        \begin{tikzcd}
            \begin{gathered}
        S'\otimes\left( \textstyle{\bigwedge^3V} \oplus \textstyle{\bigwedge^3V} \right) \\
        \oplus \\
        S' \otimes \left(\textstyle{\bigwedge^2V} \otimes W \right)
    \end{gathered} \arrow[r, "\varphi_{1,1} | \varphi_{1,2}"] & S\otimes \textstyle{\bigwedge^2V}
        \end{tikzcd}
    \end{center}
    gives a presentation for $J/\nn J$. The matrix $Y$ in $\varphi_{1,1}$ is redundant, since any column of $Y$ is a linear combination of entries of $\varphi_{1,2}$. Hence we may replace $\varphi_{1,1}$ by the map 
    $$S\otimes \textstyle{\bigwedge^3V}\overset{X}{\longrightarrow} S\otimes \textstyle{\bigwedge^2V}.$$
    Upon making this replacement, the map $\varphi_1 = [X\,\,\, \varphi_{1,2}]$ is exactly the differential $\cF_1 \to \cF_0$. Hence, $H_0(\cF)\cong J/\nn J\cong K$.
\end{proof}

\subsubsection{Resolution of $S/\nn^2$}

The $S$-module $S/\nn^2$ has a well-known minimal free resolution $\cG$ with $\GL(W)$-equivariant structure; see \cite[Thm 3.1]{BuchsbaumEisenbud}. This resolution has the form
$$\cG: \qquad 0 \gets S(W) \gets S(W)\otimes \SS^{(2)}(W) \gets S(W)\otimes \SS^{(2,1)}(W) \gets S(W)\otimes \SS^{(2,1,1)}(W) \gets \cdots$$
with differential as follows. For $\lambda = (\lambda_1, \lambda_2, \ldots, \lambda_\ell)$, there is an inclusion of $\SS^\lambda(W)$ into the tensor product of symmetric powers of $W$ in the row sizes of $\lambda$: 
$$\SS^\lambda(W) \into S^{\lambda_\ell}(W)\otimes S^{\lambda_{\ell-1}}(W)\otimes \cdots \otimes S^{\lambda_1}(W).$$
Let $\partial:S(W)\otimes \SS^{(2, 1^{i+1})}(W) \to S(W)\otimes \SS^{(2, 1^i)}(W)$ be the composite map
$$
S(W) \otimes \SS^{(2, 1^{i+1})}(W) \into S(W)\otimes W \otimes W^{\otimes i} \otimes S^2(W) \to S(W) \otimes W^{\otimes i} \otimes S^2(W),$$
where the second map is multiplication of the first two tensor factors $S(W)\otimes W \to S(W)$. The image of this map is contained in $S(W)\otimes \SS^{(2, 1^i)}(W)$ and $\partial^2=0$; see, e.g., \cite[Sec 4.1]{Veronese} for an exposition and more general cases. In what follows, we use the complex of $S'$-modules $S'\otimes_{S(W)} \cG$.

\subsubsection{The comparison map}

We now give a chain map $\cF\to\cG$ which lifts the inclusion $K\to S/\nn^2$. Define $\varphi_0:\cF_0\to \cG_0$ to be the composite map
$$S'\otimes\textstyle{\bigwedge^2V} \cong S(V)\otimes\textstyle{\bigwedge^2V} \otimes S(W)\to S(V)\otimes V\otimes V\otimes S(W) \to S(V)\otimes S(W) = S'. $$

For each $i = 1, \ldots, p-1$, let $\varphi_i$ be the composite map
\begin{center}
    \begin{tikzcd}
        S \otimes \bigwedge^2V \otimes \bigwedge^iW \arrow[r, "1\otimes \Delta\otimes1"] &
        S\otimes V \otimes V \otimes \bigwedge^iW \arrow[r, "m\otimes\psi_i"]&
        S\otimes \SS^{(2, 1^{i-1})}W
    \end{tikzcd}
\end{center}
where $\Delta$ and $m$ are as in Notation \ref{notation: V W}, and $\psi_i:V\otimes\bigwedge^iW\to \SS^{(2, 1^{i-1})}W$ is an instance of the natural map 
$$\bigotimes_j \textstyle{\bigwedge^{\lambda'_j}U}\to \SS^\lambda U$$ for any vector space $U$ and partition $\lambda$ (where $\lambda'$ denotes the transpose partition). We let $\varphi:\cF\to \cG$ be the map which has components $\varphi_i:\cF_i\to\cG_i$ and is 0 on the summands in $\cF_i$ of the form $S\otimes \bigwedge^jV\otimes \bigwedge^{i+2-j}W$ for $j\neq2$.

\begin{prop}\label{prop: comparison map}
    The map $\varphi$ is a map of chain complexes.
\end{prop}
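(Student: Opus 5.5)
We need to show that $\varphi_{i-1}\circ\partial_{\cF} = \partial_{\cG}\circ\varphi_i$ as maps $\cF_i\to\cG_{i-1}$ for every $i\geq 1$. We also need the degree-zero compatibility: the image of $\varphi_0$ should be the inclusion $K\into S/\nn^2$, which amounts to checking that $\varphi_0(e_i\wedge e_j)$ is the $2\times2$ minor $e_ie_{p+j}-e_je_{p+i}$. Unwinding the definition, $\varphi_0$ sends $e_i\wedge e_j$ to $\pm(e_i\cdot\bar e_j - e_j\cdot\bar e_i)$, where $\bar e_k = e_{p+k}$ via the identification $V\cong W$ of Notation \ref{notation: V W}. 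So the degree-zero case follows once the convention that the second $V$ factor is transported to $W$ is made explicit. All maps are $\GL(V)$-equivariant after this identification, so the remaining checks can be done on basis elements $s\otimes e_a\wedge e_b\otimes w_{c_1}\wedge\cdots\wedge w_{c_i}$ and then extended $S'$-linearly.

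Since $\varphi$ vanishes on every summand $C_{j-2,\,i+2-j}$ with $j\neq 2$, the total differential $\partial=\partial^h+\partial^v$ restricted to $\cF_i$ splits into three contributions that can land in the column $j=2$, i.e. in $C_{0,i-1}$:
\begin{enumerate}
\item[(a)] $\partial^v$ applied to $C_{0,i}=S'\otimes\bigwedge^2V\otimes\bigwedge^iW$;
\item[(b)] $\partial^h$ applied to $C_{1,i-1}=S'\otimes\bigwedge^3V\otimes\bigwedge^{i-1}W$;
\item[(c)] $\partial^h$ applied to $C_{0,i}$, which lands in $C_{-1,i}=0$ and so vanishes.
\end{enumerate}
The chain map condition therefore breaks into two identities:
\[
\partial_{\cG}\circ\varphi_i=\varphi_{i-1}\circ\partial^v \ \text{ on } C_{0,i},\qquad \varphi_{i-1}\circ\partial^h=0 \ \text{ on } C_{1,i-1}.
\]
For the second identity, take $e_a\wedge e_b\wedge e_c\otimes\omega$. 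The map $\partial^h$ produces $\sum \pm e_a\otimes e_b\wedge e_c\otimes\omega$, and then $\varphi_{i-1}$ applies $\Delta$ and multiplies the first $V$ factor into $S(V)$. The result is $\sum_{\text{cyclic}}\pm e_ae_b\otimes\psi_{i-1}(e_c\otimes\omega)$ and similar terms, which cancel by antisymmetry, exactly as in the proof that $\partial^V\circ\partial^V=0$. Concretely, the multiplication $S_1(V)\otimes S_1(V)\to S_2(V)$ is symmetric, while the coefficients coming from $\Delta\circ\partial^V$ are antisymmetric.

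The first identity is the main computation. On $s\otimes e_a\wedge e_b\otimes w_{c_1}\wedge\cdots\wedge w_{c_i}$, the left side gives
\[
\sum \pm\, s\,e_a\, w_{c_k}\otimes \psi'\bigl(\bar e_b, w_{c_1},\ldots,\widehat{w_{c_k}},\ldots\bigr),
\]
where the $\cG$-differential multiplies one $W$ entry from the column of the hook tableau into $S(W)$. The right side gives $(-1)^0\sum_k\pm s\,w_{c_k}\,e_a\otimes\psi_{i-1}\bigl(\bar e_b\otimes w_{c_1}\wedge\cdots\widehat{w_{c_k}}\cdots\bigr)$, together with the same expression with $a$ and $b$ swapped. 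The remaining work is to verify that the ABW map $\psi_i$, composed with the embedding $\SS^{(2,1^{i-1})}W\into W^{\otimes(i-1)}\otimes S^2W$ and the contraction, reproduces the Koszul-type expansion of $\bigwedge^iW$. This reduces to the explicit formula for $\psi_i(v\otimes w_1\wedge\cdots\wedge w_i)$ as an alternating sum of hook tableaux with $v\,w_k$ in the first row.

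I expect the main obstacle to be this hook-shape bookkeeping: choosing the correct normalization and sign conventions for $\psi_i$ and for the embedding of $\SS^{(2,1^{i})}$ so that the terms in which the first row $\bar e_b w_{c_k}$ gets contracted match up, and so that the terms in which $\bar e_b$ itself is multiplied into $S(W)$ cancel or correspond to the zero summands. If matching signs directly becomes unwieldy, I would fall back on a generic reduction. All maps are defined over $\ZZ$ and are natural in $V$ and $W$, so it suffices to check the identity for $\dim V=\dim W=i+2$ on a single highest-weight-type basis vector. A direct (or computer) verification of that case, in the spirit of the $p=5$ check in Proposition \ref{prop: 3x3 minors are gb}, would then suffice.
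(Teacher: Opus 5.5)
Your proposal is correct. On the summand $C_{0,i}$ (your identity (a)) it follows the paper's argument: the paper evaluates both composites on $1\otimes(e_a\wedge e_b)\otimes(e_{p+c_1}\wedge\cdots\wedge e_{p+c_{i+1}})$ using hook tableaux, which amounts to your alternating-sum formula for $\psi_i$.

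The bookkeeping is lighter than you fear. Under $\SS^{(2,1^{i-1})}W\into W^{\otimes(i-1)}\otimes S^2W$, the transported entry $\bar e_b$ always lies in the $S^2W$ factor. The differential of $\cG$ contracts only the single-box last-row factor, so $\bar e_b$ is never multiplied into $S(W)$. Grouping permutations by which $w_{c_k}$ occupies that last-row factor gives the Koszul expansion directly. The only discrepancy is one sign per homological degree, which you can absorb by rescaling the $\varphi_i$ (or the differentials of $\cG$) by $\pm1$.

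Where you differ is the decomposition, and here your version is more complete than the paper's. The paper discards every summand $S\otimes\bigwedge^kV\otimes\bigwedge^\ell W$ with $k>2$ because $\varphi$ vanishes there. But $\partial^h$ maps $C_{1,i-1}=S'\otimes\bigwedge^3V\otimes\bigwedge^{i-1}W$ into $C_{0,i-1}$, where $\varphi_{i-1}$ is nonzero. The chain-map condition on that summand is exactly your identity (b), which the paper's computation never addresses. Your argument for (b) is right: every $\varphi_{i-1}$, including $\varphi_0$, factors through $(m\otimes1)\circ(1\otimes\Delta)=\partial^V_2$. Hence $\varphi_{i-1}\circ\partial^h$ factors through $\partial^V_2\circ\partial^V_3=0$.

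One caution about your fallback. Even after passing to $\mathbb{Q}$ via the $\ZZ$-forms, $\bigwedge^2V\otimes\bigwedge^iV$ has several irreducible summands, so a single highest-weight vector does not suffice. Also, a computer check reaches only finitely many $i$, while $i$ runs up to $p-1$ for arbitrary $p$. The direct computation is the one to write down.
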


\begin{proof}
    We verify on the $S$-basis elements of $\cF_{i+1}$ that $\varphi$ commutes with the differentials of $\cF$ and $\cG$. Because the map $\varphi$ is 0 on the summands $S\otimes \bigwedge^kV\otimes \bigwedge^\ell W$ for which $k>2$, we only consider the basis elements in $S\otimes \bigwedge^2V\otimes\bigwedge^{i+1}W$. We use Young tableaux to represent elements of $\cF_k$ and $\cG_k$. Let $x = 1\otimes (e_a\wedge e_b) \otimes (e_{p+c_1}\wedge \cdots \wedge e_{p+c_{i+1}})$ be an $S$-basis element of $S\otimes \bigwedge^2V\otimes \bigwedge^{i+1}W\subset \cF_{i+1}$. We compute its image under $\varphi_i\circ\partial^\cF_{i+1} = \psi\circ\Delta\circ \partial^\cF_{i+1}$:
    \begin{align*}
        x = 1 \otimes \begin{ytableau}
            a \\ b
        \end{ytableau} \otimes
        \begin{ytableau}
            c_1 \\ \vdots \\ c_{i+1}
        \end{ytableau} &\overset{\partial^\cF_{i+1}}{\longmapsto} \sum_{j=1}^{i+1}(-1)^j e_{p+c_j}\otimes
        \begin{ytableau}
            a\\b
        \end{ytableau} \otimes
        \begin{ytableau}
            c_1 \\ \vdots \\ \hat{c_j} \\ \vdots \\ c_{i+1}
        \end{ytableau}\\
        &\overset{\Delta}{\longmapsto}
        \sum_{j=1}^{i+1}(-1)^j e_{p+c_j}\otimes(a\otimes b - b\otimes a) \otimes
        \begin{ytableau}
            c_1 \\ \vdots \\ \hat{c_j} \\ \vdots \\ c_{i+1}
        \end{ytableau}\\
        &\overset{\psi}{\longmapsto}
        \sum_{j=1}^{i+1}(-1)^j e_{p+c_j} \bigg(e_a\otimes
        \begin{ytableau}
            c_1 & b \\ \vdots \\ \hat{c_j} \\ \vdots \\ c_{i+1}
        \end{ytableau} - e_b\otimes\begin{ytableau}
            c_1 & a \\ \vdots \\ \hat{c_j} \\ \vdots \\ c_{i+1}
        \end{ytableau}\bigg).
    \end{align*}
    Next we give the image under $\partial^G_i\circ\varphi_{i+1} = \partial^G_i\circ\psi\circ\Delta:$
    \begin{align*}
        1 \otimes \begin{ytableau}
            a \\ b
        \end{ytableau} \otimes
        \begin{ytableau}
            c_1 \\ \vdots \\ c_{i+1}
        \end{ytableau} &\overset{\Delta}{\longmapsto}
        1 \otimes(a\otimes b - b\otimes a) \otimes 
        \begin{ytableau}
            c_1 \\ \vdots \\ c_{i+1}
        \end{ytableau} \\
        &\overset{\psi}{\longmapsto}
        e_a\otimes
        \begin{ytableau}
            c_1 & b \\ \vdots \\ c_{i+1}
        \end{ytableau} - e_b \otimes
        \begin{ytableau}
            c_1&a \\ \vdots \\ c_{i+1}
        \end{ytableau} \\
        &\overset{\partial^\cG_{i+1}}{\longmapsto}
        \sum_{j=1}^{i+1}(-1)^j e_ae_{p+c_j}\otimes 
        \begin{ytableau}
            c_1 & b \\
            \vdots \\ \hat{c_j} \\ \vdots \\ 
            c_{i+1}
        \end{ytableau} - 
        e_be_{p+c_j}\otimes
        \begin{ytableau}
            c_1 & a\\
            \vdots \\ \hat{c_j} \\ \vdots \\ c_{i+1}
        \end{ytableau}\\
        &= \varphi_i\circ \partial_{i+1}^{\cF}(x)
    \end{align*}
\end{proof}

\begin{thm}
    The mapping cone of $\varphi$, as defined just before Proposition \ref{prop: comparison map}, gives a minimal free resolution of $S/I'$. Moreover, there is an equality of projective dimensions $\mathrm{pdim}(S/I) = \mathrm{pdim}(S/I') = 2p-3$, and the total Betti numbers for $S/I$ and $S/I'$ are equal.
\end{thm}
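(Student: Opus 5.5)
The plan is to assemble the mapping cone from the pieces already built, then pass from $I'$ back to $I$ by a standard semicontinuity argument. First, the short exact sequence (\ref{eq: ses}) $0\to K\to S/\nn^2\to S/I'\to 0$, together with the free resolutions $\cF$ of $K$ (Proposition \ref{prop: double complex acyclic} and the identification $H_0(\cF)\cong K$) and $\cG$ of $S/\nn^2$, and the chain map $\varphi:\cF\to\cG$ lifting the inclusion $K\into S/\nn^2$ (Proposition \ref{prop: comparison map}), gives a free resolution $\mathrm{cone}(\varphi)$ of $S/I'$. This is the standard long exact sequence in homology for a mapping cone. One point needs checking: $\varphi_0$ really induces the inclusion on $H_0$. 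That is clear, because $\varphi_0$ sends $e_i\wedge e_j$ to $e_ie_{p+j}-e_je_{p+i}$, which is the image of the corresponding minor of $B$. Here $\cone(\varphi)_n=\cG_n\oplus\cF_{n-1}$.

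Next comes minimality. Both $\cF$ and $\cG$ have linear differentials in the standard grading: the Koszul pieces are linear, and $\cG$ is the linear resolution of $\nn^2$ past the first step. Putting $\cF_0$ in degree $2$, $\cF_n$ is generated in degree $n+2$. Likewise $\cG_n$ is generated in degree $n+1$ for $n\ge1$, and $\cG_0$ in degree $0$. In the cone, the summand $\cF_{n-1}$ therefore sits in degree $n+1$, the same as $\cG_n$. The differential of the cone consists of $\partial^\cF$, $\partial^\cG$ and $\varphi$. The map $\varphi$ is degree $0$ between generators of equal degree, but its entries are built from $m:V\otimes V\to S$ after $\Delta$, so they have polynomial degree $1$ with respect to bases and lie in $\mm$. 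The component $\varphi_0:\cF_0\to\cG_0$ goes from degree $2$ to degree $0$, so its entries are quadrics. Hence every entry of the cone differential lies in $\mm$ and the cone is minimal. I expect this bookkeeping — checking that no unit entries appear, in particular in the components $\varphi_i$ landing in $\SS^{(2,1^{i-1})}W$ — to be the main (if modest) obstacle. The key observation is that $\psi_i$ is $\kk$-linear, while $m$ multiplies in a factor of $V\subset S_1$.

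Then I would compute the projective dimension. The length of $\cG$ is the length of the Buchsbaum--Eisenbud resolution of $S(W)/\nn^2$ with $\dim W=p-1$, namely $p-1$, with last term $\SS^{(2,1^{p-2})}W$. The length of $\cF=\Tot(\cC)$ is the largest $a+b$ with $a+2\le p-1$ and $b\le p-1$, namely $(p-3)+(p-1)=2p-4$. In the cone, $\cF_{2p-4}$ contributes to homological degree $2p-3$, which exceeds $p-1$ since $p\geq 3$. Minimality therefore gives $\mathrm{pdim}(S/I')=2p-3$.

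Finally I would transfer the result to $I$. By Proposition \ref{prop: 3x3 minors are gb} and \cite[Prop 15.28]{EisenbudCA}, $I'$ is obtained from the homogenization $\tilde I$ via a flat family: $S[t]/\tilde I$ is $t$-torsion free, with special fiber $S/I'$ at $t=0$ and general fiber $S/I$ at $t=1$. Upper semicontinuity, applied to the graded Betti numbers of this flat degeneration (equivalently, via a lifted resolution or a cancellation argument), gives $\beta_i(S/I)\le\beta_i(S/I')$, where $\beta_i(S/I)$ is computed over the local ring at $\mm$, or over $S$ since $I\subseteq\mm$ is generated by elements of $\mm$. Conversely, the resolution of $S/I'$ is linear (up to the shift at step $1$). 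A linear minimal resolution admits no consecutive cancellations, because cancellation would require units between generators in degrees differing by $1$. Therefore lifting the cone resolution to $S[t]/\tilde I$ and specializing at $t=1$ yields a minimal resolution of $S/I$, and the total Betti numbers agree. In particular $\mathrm{pdim}(S/I)=2p-3$.
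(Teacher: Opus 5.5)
Your first three steps match the paper's proof and are somewhat more explicit than it. The cone resolves $S/I'$ because $\varphi_0$ induces $K\into S/\nn^2$. It is minimal because each $\varphi_n$ sends generators of standard degree $n+2$ to generators of degree $n+1$ (degree $0$ when $n=0$). Finally, $\mathrm{pdim}(S/I')=2p-3$ because $\cG$ has length $p-1$ while $\cF$ has length $(p-3)+(p-1)=2p-4$.

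\textbf{The gap is in the transfer to $I$.} The family $S[t]/\tilde I$ built from the homogenization in \cite[Prop 15.28]{EisenbudCA} does not have special fiber $S/I'$. Setting $t=0$ in a homogenization keeps the \emph{top}-degree form, whereas $I'$ consists of lowest-degree forms. Concretely, the type (ii) minor homogenizes to $t\,e_{p+i}e_{p+j}-e_ie_{p+j}e_p+e_ie_je_{2p}$. Its specialization at $t=0$ is $e_i(e_je_{2p}-e_pe_{p+j})$, which is not in $I'$: every term of every element of $I'$ involves some $e_{p+k}$ with $1\le k\le p-1$. In Eisenbud's proposition, $\tilde I$ is only a computational device (Gr\"obner basis, then dehomogenize, then take lowest forms). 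So there is no resolution of $S/I'$ to lift to $S[t]/\tilde I$. Moreover, in any family where $t$ has degree $+1$ relative to the $e_i$, a linear entry $\ell+ct$ of a lifted differential becomes a unit at $t=1$. Minimality after specializing would therefore not follow either.

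\textbf{How to repair it.} You need the degeneration to the tangent cone. One option is $f\mapsto t^{-\operatorname{ord} f}f(te_1,\ldots,te_{2p})$. Another uses that $I$ is homogeneous for the grading $\deg e_i=i$: take the Gr\"obner degeneration for the positive weight $N(1,2,\ldots,2p)-(1,\ldots,1)$ with $N\ge 2$, whose initial ideal is exactly $I'$.
\begin{itemize}
\item In that family, upper semicontinuity gives $\beta_i(S/I)\le\beta_i(S/I')$; this is the Robbiano / Herzog--Rossi--Valla inequality \cite{HerzogRossiValla} that the paper cites.
\item A unit can survive at $t=1$ only in an entry from a generator in homological degree $i+1$ to one in homological degree $i$. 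The two must have the same degree in the grading $\deg e_i=i$, and the target must have \emph{strictly larger} standard degree. These are the negative consecutive cancellations of \cite{RossiSharifan}, which the paper invokes.
\item Linearity makes the standard degree rise by exactly one at each homological step, so no such cancellation can occur.
\end{itemize}
Your sentence that cancellation ``would require units between generators in degrees differing by $1$'' misstates this criterion. A difference of $+1$ in that direction is precisely what forbids units. With this repair, or by simply citing these results as the paper does, your argument coincides with the paper's.

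A smaller point: the Betti numbers of the inhomogeneous ideal $I$ are well defined because $I$ is homogeneous for $\deg e_i=i$ (equivalently, because you compute over $S_\mm$). The fact that $I\subseteq\mm$ is not by itself the reason.
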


\begin{proof}
    Since $0 \to K \to S/\nn^2 \to S/I'\to 0$ is short exact, and $\cF, \cG$ resolve $K$ and $S/\nn^2$, respectively, the mapping cone $\mathrm{Cone}(\varphi)$ will be a free resolution of $S/I'$. Let $S_+$ denote the ideal of all positive degree elements of $S$. Because $\varphi(\cF_i)\subset S_+ \cG_i$ for all $i$, the mapping cone gives a minimal resolution of $S/I'$. It was shown originally in \cite{Robbiano} and also proven in \cite[Thm 3.1/Cor 3.2]{HerzogRossiValla} that the Betti numbers for an associated graded ideal give an upper bound for the Betti numbers of the original ideal, i.e. $\beta^S_i(S/I)\leq \beta^S_i(S/I')$ for all $i$.
    
    From the construction of $\cF, \cG$, and the comparison maps, one can see that $\mathrm{Cone}(\varphi)$ gives a \textit{linear} free resolution of $S/I'$, meaning that all entries in the differential matrices are linear forms. By \cite[Theorem 3.1]{RossiSharifan} (see also \cite{Peeva-ConsecutiveCancel}), the total Betti numbers of $S/I$ may be obtained from the graded Betti numbers of $S/I'$ by a series of \textit{negative consecutive cancellations}. This corresponds to the presence of maps of the form $S(-j)\to S(-j')$, with $j'>j$, in the minimal free resolution of $S/I'$. However, since the minimal free resolution of $S/I'$ is linear, a negative consecutive cancellation is not possible. Hence, the total Betti numbers of $S/I$ and $S/I'$ must agree. The projective dimension of $S/I'$ can be computed directly from its resolution as the last nonzero degree in the resolution of $K$.
\end{proof}

\section*{Acknowledgments}
The first author was partially supported by NSF grant  DMS 2147769 and a Simons Fellowship. The second author is grateful to Paulo Mantero for helpful conversation regarding linear resolutions.

\bibliographystyle{plain}
\bibliography{biblio}

\end{document}